\newfont{\msam}{msam10}
\newtheorem{theorem}[]{Theorem}
\newtheorem{proposition}[]{Proposition}
\newtheorem{corollary}[]{Corollary}
\newtheorem{lemma}[]{Lemma}
\newtheorem{prop}[theorem]{Proposition}
\theoremstyle{definition}
\newtheorem{definition}[]{Definition}
\let\nc\newcommand
\def\bthm{\begin{theorem}}
\def\ethm{\end{theorem}}
\def\blemma{\begin{lemma}}
\def\elemma{\end{lemma}}
\def\bproof{\begin{proof}}
\def\eproof{\end{proof}}
\def\bprop{\begin{proposition}}
\def\eprop{\end{proposition}}
\def\bcor{\begin{corollary}}
\def\ecor{\end{corollary}}
\nc{\la}{\label}
\def\Z{\mathbb{Z}}
\def\N{\mathbb{N}}
\def\c{\mathbb{C}}
\def\M{\mathbb{M}}
\def\L {\boldsymbol{L}}
\def\Com{\mathtt{Com}}
\def\Alg{\mathtt{Alg}}
\def\DGC{\mathtt{DGC}}
\def\Tw{\mathtt{Tw}}
\def\Mod{\mathtt{Mod}}
\def\cMod{\mathtt{CoMod}}
\def\Sets{\mathtt{Sets}}
\def\DGA{\mathtt{DGA}}
\def\cDGA{\mathtt{CDGA}}
\def\bDGA{\mathtt{BiDGA}}
\def\bcDGA{\mathtt{BiCDGA}}
\def\D{{\mathscr D}}
\def\C{{\mathscr C}}
\def\Ho{{\mathtt{Ho}}}
\nc{\Ob}{{\rm Ob}}
\nc{\Hom}{{\rm{Hom}}}
\nc{\Homcont}{{\mathcal{H}om}}
\nc{\HOM}{\underline{\rm{Hom}}}
\nc{\DER}{\underline{\rm{Der}}}
\nc{\END}{\underline{\rm{End}}}
\nc{\bSym}{\mathbf{\Lambda}}
\nc{\Ext}{{\rm{Ext}}}
\nc{\Rep}{{\rm{Rep}}}
\nc{\DRep}{{\rm{DRep}}}
\nc{\NCRep}{\widetilde{\rm{Rep}}}
\nc{\RAct}{{\rm{RAct}}}
\nc{\bs}{\backslash}
\nc{\ob}{{\tt{Obs}}}
\nc{\CE}{{\mathcal C}}
\nc{\nn}{{{\natural} {\natural}}}
\nc{\n}{{{\natural}}}
\nc{\A}{\mathbb A}
\nc{\B}{{\mathrm{B}}}
\nc{\Ba}{\overline{\mathrm{B}}}
\nc{\bC}{\overline{C}}
\nc{\bOmega}{\boldsymbol{\Omega}}
\nc{\bB}{\boldsymbol{B}}
\nc{\EXT}{\underline{\rm{Ext}}}
\nc{\TOR}{\underline{\rm{Tor}}}
\def\H{\mathrm H}
\def\HC{\mathrm{HC}}
\def\rHC{\overline{\mathrm{HC}}}
\def\rCC{\overline{\mathrm{CC}}}
\nc{\End}{{\rm{End}}}
\nc{\GL}{{\rm{GL}}}
\nc{\gl}{{\mathfrak{gl}}}
\nc{\rgl}{\overline{{\mathfrak{gl}}}}
\nc{\g}{{\mathfrak{g}}}
\nc{\h}{{\mathfrak{h}}}
\nc{\PGL}{{\rm{PGL}}}
\nc{\SL}{{\rm{SL}}}
\nc{\sll}{\mathfrak{sl}}
\nc{\cn}{ \mbox{\rm c\^{o}ne} }
\nc{\PSL}{{\rm{PSL}}}
\nc{\ad}{{\rm{ad}}}
\nc{\Ad}{{\rm{Ad}}}
\nc{\dlim}{\varinjlim}
\nc{\plim}{\varprojlim}
\nc{\colim}{{\tt{colim}}}
\nc{\hocolim}{{\tt{hocolim}}}
\newcommand{\bL}{\boldsymbol{\Lambda}}
\newcommand{\Spec}{{\rm{Spec}}}
\newcommand{\Sym}{{\rm{Sym}}}
\newcommand{\id}{{\rm{Id}}}
\newcommand{\Tr}{{\rm{Tr}}}
\newcommand{\Ker}{{\rm{Ker}}}
\newcommand{\into}{\,\hookrightarrow\,}
\newcommand{\onto}{\,\twoheadrightarrow\,}
\newcommand{\sonto}{\,\stackrel{\sim}{\twoheadrightarrow}\,}
\def\bs{\backslash}
\newcommand{\rar}{\xrightarrow{}}
\newcommand{\Cyl}{\mathtt{Cyl}}
\nc{\FT}{\mathcal{C}}
\numberwithin{equation}{section}
\numberwithin{theorem}{section}
\numberwithin{lemma}{section}
\numberwithin{proposition}{section}
\numberwithin{corollary}{section}
\numberwithin{example}{section}
\numberwithin{remark}{section}
\begin{document}
 \title{Stable Representation Homology and Koszul Duality}
\author{Yuri Berest}
\address{Department of Mathematics,
 Cornell University, Ithaca, NY 14853-4201, USA}
\email{berest@math.cornell.edu}
\author{Ajay Ramadoss}
\address{Departement Mathematik,
ETH Z\"urich, 8092 Z\"urich, Switzerland}
\email{ajay.ramadoss@math.ethz.ch}
%
%

\begin{abstract}
This paper is a sequel to \cite{BKR}, where we study the derived affine scheme $\DRep_n(A)$ 
parametrizing the $n$-dimensional representations of an associative $k$-algebra $A$. In \cite{BKR}, we have constructed canonical trace maps 
$ \Tr_n(A)_{\bullet}\,:\, \HC_{\bullet}(A) \rar \H_{\bullet}[\DRep_n(A)]^{\GL_n} $ extending the usual characters of representations to higher cyclic homology. This raises the natural question whether a well-known theorem of Procesi \cite{P} holds in the derived setting: namely, is the algebra homomorphism $ \bSym\mathrm{Tr}_n(A)_{\bullet}\,:\,\bSym_k[\HC_{\bullet}(A)] \rar \H_{\bullet}[\DRep_n(A)]^{\GL_n}$ defined by $ \Tr_n(A)_{\bullet} $ surjective? In the present paper, we answer this question for augmented algebras.
Given such an algebra, we construct a canonical dense subalgebra $\DRep_{\infty}(A)^{\mathrm{Tr}}$ of the topological DG algebra $ \varprojlim\,\DRep_n(A)^{\GL_n} $. Our main result is that on passing to the inverse limit, the family of maps $\bSym\mathrm{Tr}_n(A)_{\bullet}$ `stabilizes' to an isomorphism  $\bSym_k(\rHC_{\bullet}(A)) \,\cong\, \H_{\bullet}[\DRep_{\infty}(A)^{\mathrm{Tr}}]$. The derived version of Procesi's Theorem does therefore hold in the limit as $ n \to \infty $. However, for a fixed (finite) $n$, there exist homological obstructions to the surjectivity of $ \bSym\mathrm{Tr}_n(A)_{\bullet}$, and we show on simple examples that these obstructions do not vanish in general. We compare our result with the classical theorem of Loday, Quillen and
Tsygan on stable homology of matrix Lie algebras. We show that the Chevalley-Eilenberg complex $\CE_\bullet(\gl_\infty(A), \gl_\infty(k); k)$ equipped with a natural coalgebra structure is Koszul dual to the DG algebra $\DRep_{\infty}(A)^{\mathrm{Tr}}$. We also extend our main results to bigraded DG algebras, in which case we show that
$ \DRep_{\infty}(A)^{\mathrm{Tr}} = \DRep_{\infty}(A)^{\GL_{\infty}}$.
As an application, we compute the Euler characteristics of 
$\DRep_{\infty}(A)^{\GL_{\infty}}$ and $\rHC_{\bullet}(A)$ and derive
some interesting combinatorial identities.
\end{abstract}

\maketitle

\section{Introduction} Let $ k $ be a field of characteristic zero.
If $A$ is a semi-simple Artinian algebra over $k$ (e.g., the group algebra of a finite group), every matrix representation $ \varrho: A \to \M_n(k) $ of $A$ is determined, up to isomorphism, by its character $\,a \mapsto \Tr[\varrho(a)] \,$, and for each $ n \ge 0 $, there are only finitely many isomorphism classes of such representations. The classical character theory generalizes to arbitrary finitely generated algebras in a geometric way. The set of all $n$-dimensional representations of an associative $k$-algebra $A$ can be naturally given the structure of an affine $k$-scheme called the {\it representation scheme} $\, \Rep_n(A) \,$ (we write $ A_n = k[\Rep_n(A)] $ for the corresponding commutative algebra).
The isomorphism classes of $n$-dimensional representations are parametrized by the orbits of the general linear group $ \GL_n(k) $ which acts algebraically on $ \Rep_n(A) $ by conjugation. The classes of semi-simple representations correspond to the closed orbits and are parametrized by the affine quotient scheme $\,\Rep_n(A)/\!/\GL_n(k) := \Spec\,A_n^{\GL_n} $ (see, e.g., \cite{Kr}). Now, the characters of representations define a linear map
\begin{equation}
\la{trr}
\Tr_n(A):\ \HC_0(A) \to A_n^{\GL_n}\ ,
\end{equation}
where $ \HC_0(A) = A/[A,A] $ is the $0$-th cyclic homology (abelianization) of $A$.
A well-known theorem of Procesi \cite{P} asserts that the characters of $A$ actually
generate $ A_n^{\GL_n} $ as an algebra; in other words, the algebra homomorphism
\begin{equation}
\la{trra}
\Sym\, \Tr_n(A):\, \Sym_k [\HC_0(A)] \to A_n^{\GL_n}
\end{equation}
defined by \eqref{trr} is {\it surjective}. This result is a consequence of
the First Fundamental Theorem of invariant theory (see, e.g., \cite{KP}), and it
plays a fundamental role in representation theory of algebras.

In our earlier paper, \cite{BKR}, we constructed a derived version of the
representation scheme $ \Rep_n(A) $ by extending the functor $ \Rep_n $ to
the category of differential graded (DG) algebras and deriving it in the sense of non-abelian
homological algebra \cite{Q1}\footnote{The first construction of this kind was
proposed by Ciocan-Fontanine and Kapranov in \cite{CK}.
The relation of our construction to that of \cite{CK} is explained in \cite[Section~2.3.6]{BKR}.}. The corresponding derived scheme
$ \DRep_n(A) $ is represented (in the homotopy category of DG algebras) by a commutative DG algebra,
which (abusing notation) we also denote $ \DRep_n(A) $.
The homology of $ \DRep_n(A) $ depends only on $\,A\,$ and $\,n\,$,
with $ \H_0[\DRep_n(A)] $ being canonically isomorphic to $ A_n $. Following \cite{BKR}, we call
$ \H_\bullet[\DRep_n(A)] $ the {\it $n$-th representation homology} of $A$ and denote it by
$ \H_\bullet(A, n) $. The action of $ \GL_n $ on $ A_n $ extends naturally to
$ \DRep_n(A) $, and there is an isomorphism of graded algebras
$\,
\H_\bullet[\DRep_n(A)^{\GL_n}] \cong \H_\bullet(A, n)^{\GL_n} $.

Now, one of the key results of \cite{BKR} is the construction of canonical trace maps
\begin{equation}
\la{trr1}
\Tr_n(A)_\bullet:\, \HC_\bullet(A) \to \H_\bullet(A,n)^{\GL_n}\ ,
\end{equation}
extending \eqref{trr} to the higher cyclic homology. Assembled together,
these maps define a homomorphism of graded commutative algebras
\begin{equation}
\la{trra1}
\bL \Tr_n(A)_\bullet:\,
\bL_k [\HC_\bullet(A)] \to \H_\bullet(A,n)^{\GL_n}\ ,
\end{equation}
where $ \bL_k $ denotes the graded symmetric algebra over $k$. Then, given the Procesi Theorem,
it is natural to ask ({\it cf.} \cite[(1.6)]{BKR}):
\begin{equation}
\la{quest}
\textit{Is the map \eqref{trra1} surjective}\,?
\end{equation}

In the present paper, we will study this question for augmented algebras ({\it i.e.}, associative DG algebras equipped with a homomorphism $ A \to k $). The advantage of working with augmented algebras is that there are natural $ \GL$-equivariant maps $ \DRep_{n+1}(A) \to \DRep_{n}(A) $ which form an inverse system and allow one to stabilize the family $ \{\DRep_{n}(A)^{\GL_n}\} $; 
in this way, one can simplify the problem by passing to the infinite-dimensional limit $ n \to \infty $. More precisely, our approach consists of three steps. 

First, using the stabilization maps, we take the inverse limit $\, \varprojlim\,\DRep_n(A)^{\GL_n} \cong \DRep_\infty(A)^{\GL_\infty} $ and construct a canonical DG subalgebra $\, \DRep_\infty(A)^\Tr \,$ in $\, \DRep_\infty(A)^{\GL_\infty} $, which is dense in an appropriate inverse limit topology.
We call $\, \DRep_\infty(A)^\Tr $
the {\it trace subalgebra} and refer to its homology $\,\H_\bullet(A, \infty)^\Tr $
as the {\it stable representation homology} of $A$. It turns out that there is a canonical
coalgebra structure on $\, \DRep_\infty(A)^\Tr $ that makes it a commutative cocommutative
DG Hopf algebra; thus $\,\H_\bullet(A, \infty)^\Tr $ is a graded Hopf algebra.

Second, we stabilize the family of trace maps \eqref{trra1} and prove that they induce 
an isomorphism of graded Hopf algebras
\begin{equation}
\la{trra2}
{\bL}[\rHC_\bullet(A)] \stackrel{\sim}{\to} \H_\bullet(A, \infty)^{\Tr}\ ,
\end{equation}
where $ \rHC_\bullet(A) $ is the reduced cyclic homology of $A$.
This result is a consequence of Theorem~\ref{tS2.3}, which we call
`the stabilization theorem' and which is technically the main theorem of the paper.

Third, for a fixed $n$, we construct a complex $ K_\bullet(A,n) $
whose homology obstructs $ \H_{\bullet}(A, n)^{\GL_n} $ from attaining its
`stable limit' $ \H_\bullet(A, \infty)^{\Tr} $; thus, we get homological obstructions to the surjectivity of \eqref{trra1}. It is
worth noting that for an ordinary algebra ({\it i.e.}, a DG algebra concentrated
in homological degree $0$), the complex $ K_\bullet(A,n) $ is
acyclic in negative degrees, so {\it in degree zero} there are no obstructions:
whence the surjectivity of \eqref{trra}.
In general, however, the answer to \eqref{quest} turns out to be negative;
in Section~\ref{ex11}, we give simple examples
showing that \eqref{trra1} need not be surjective even for $ n = 1 $.

Apart from question \eqref{quest}, we will give two other applications
of the stabilization theorem. First, we clarify the relation between
representation homology and Lie algebra homology. As was already observed in
\cite{BKR}, the isomorphism \eqref{trra2} is analogous to a well-known
Loday-Quillen-Tsygan (LQT) isomorphism describing the stable homology of matrix Lie algebras
in terms of cyclic homology (see \cite{LQ, T}). In its relative form,
the Loday-Quillen-Tsygan Theorem asserts
\begin{equation}
\la{trra3}
\H_\bullet(\gl_\infty(A), \gl_{\infty}(k); k) \cong \bL [\rHC_{\bullet-1}(A)]\ ,
\end{equation}
where $ \gl_\infty(A) := \varinjlim\,\gl_r(A) $ is the Lie algebra of
all finite matrices over $A$ and $ \gl_\infty(k) $ is its subalgebra
consisting of matrices with entries in $k$.
Now, recall that the standard Chevalley-Eilenberg complex $\, \CE_\bullet(\g, \h; k) \,$
computing the Lie homology $\, \H_\bullet(\g, \h; k) \,$
has a natural DG coalgebra structure. Using the results of \cite{LQ} and \cite{BKR},
we will construct, for any $ r $ and $n$, a canonical degree $ -1 $ map
\begin{equation}
\la{itmn}
\tau_{r,n}(A):\, \CE_\bullet(\gl_r(A), \gl_{r}(k); k) \to \DRep_n(A)^{\GL_n}
\end{equation}
which is a twisting cochain ({\it i.e.}, a solution of a Maurer-Cartan equation)
with respect to the DG coalgebra structure on $ \CE_\bullet(\gl_r(A), \gl_{r}(k); k) $. 
Stabilizing \eqref{itmn} on both sides (as $\,r,n \to \infty$), we then get a map
$$
\tau_{\infty, \infty}(A): \CE_\bullet(\gl_\infty(A), \gl_\infty(k); k) \to
\DRep_\infty(A)^\Tr
$$
relating the Chevalley-Eilenberg complex of $ \gl_\infty(A) $
to the trace subalgebra of $A$. Our stabilization theorem together with LQT implies
that $ \tau_{\infty, \infty}(A) $ is actually an {\it acyclic} twisting cochain.
This means that the DG coalgebra
$ \CE_\bullet(\gl_\infty(A), \gl_\infty(k); k) $ is {\it Koszul dual} to the
DG algebra $ \DRep_\infty(A)^\Tr $; in particular, the latter is determined by
the former up to quasi-isomorphism. As a standard consequence of Koszul duality, we have an isomorphism
$$
\H_\bullet(A, \infty)^\Tr \cong \Ext^{-\bullet}_{\CE}(k, k)\ ,
$$
where $ \CE := \CE_\bullet(\gl_\infty(A); \gl_\infty(k), k) $ and the Ext-algebra
on the right is the Yoneda algebra defined in an appropriate category of DG comodules
over $ \CE $. An interesting question, which we leave entirely open, is whether the twisting cochain $ \tau_{r,n}(A) $ may be acyclic in the `unstable' range ({\it i.e.},
for $r$ and $n$ other than $ \infty $).

The second application of our stabilization theorem is concerned with {\it bigraded} DG algebras ({\it i.e.}, augmented DG algebras equipped with an extra polynomial grading). In this case, the trace subalgebra actually coincides with the
algebra of $ \GL_\infty$-invariants, so the stabilization theorem implies
\begin{equation}
\la{trra5}
{\bL}[\rHC_\bullet(A)] \cong \H_\bullet(A, \infty)^{\GL_\infty}\ .
\end{equation}
Equating the (graded) Euler characteristics of both sides of \eqref{trra5}, one can obtain some interesting combinatorial identities. We will compute a number of explicit examples in
Section~\ref{comb}. Our computations are inspired by the Lie homological approach to the famous Macdonald conjectures \cite{Ha, FGT}. The idea of using classical Molien-Weyl matrix integrals in these computations is borrowed from \cite{EG}.

Finally, we would like to comment on our proof of the stabilization theorem. Although
it has the same basic ingredients as the proof of the LQT Theorem (e.g., we use invariant theory and the Milnor-Moore structure theorem for commutative cocommutative Hopf algebras), there are two major differences. First, unlike in the case of Lie homology,
we do not have a `small' canonical complex for computing representation homology; thus, we are
forced to work with arbitrary DG resolutions, which makes our arguments more general and flexible
but less explicit. Second, to stabilize representation homology one has to take inverse limits (rather than direct limits in the Lie homology case). This gives some arguments a topological flavor and requires the use of a more sophisticated
version of invariant theory of inductive limits of groups acting on inverse limits of modules
and algebras\footnote{Luckily, the basics of such an invariant theory have been worked out in
\cite{T-TT}, and the results of this paper can be applied to our situation
({\it cf.} Section~\ref{S1.1}).}.

The paper is organized as follows. In Section~\ref{S1}, we introduce notation,
recall some basic facts about DG algebras and review the material from \cite{BKR} needed
for the present paper. In Section~\ref{S1.3}, we prove an extended version of Procesi's Theorem for DG algebras (Theorem~\ref{pS1.3.1}). Although this result is probably known, we could not
find an appropriate reference in the literature. Section~\ref{S2} contains the main
results of this paper, including the construction of the trace subalgebra
(Sections~\ref{S2.1}-\ref{S2.2}) and the proof of the stabilization theorem (Section~\ref{S2.3}).
In Section~\ref{KDOC}, after a brief overview of the theory of twisting cochains (Section~\ref{actw}), we establish the Koszul duality between stable representation homology and Lie homology (Theorem~\ref{KD2}) and
address our main question \eqref{quest}. In Section~\ref{Cryscoh},
we compute the `stable limit' of the De Rham cohomology of $ \DRep_n(A)^{\GL_n} $.
This computation was motivated by a question of D.~Kaledin ({\it cf.}
Remark~\ref{RKal}). In Section~\ref{S3}, we extend our results to the bigraded DG
algebras. In this case, one can prove a more refined version of the stabilization
theorem (Theorem~\ref{tS3.4}), which gives more information on the stability
of the homology groups. This section also contains explicit
computations of Euler characteristics and examples of combinatorial identities
mentioned above. The paper ends with an Appendix where we prove some basic lemmas
on cyclic derivatives in the $\Z_2$-graded setting.

\subsection*{Acknowledgements}{\footnotesize
We are very grateful to G.~Felder for many interesting suggestions
and for allowing us to use his {\tt Macaulay2} software for computing homology of commutative DG algebras. We would also like to thank B.~Feigin, D.~Kaledin, B.~Shoikhet and B.~Tsygan for inspiring discussions and comments. Yu. B. is
grateful to Forschungsinstitut f\"ur Mathematik (ETH, Z\"urich) for its hospitality
and financial support during his stay in Fall 2012. The work of Yu. B. was partially supported by NSF grant DMS 09-01570 and the work of A.~R. was supprted by the Swiss National Science Foundation (Ambizione Beitrag Nr. PZ00P2-127427/1).}

\section{Preliminaries}
\la{S1}
\subsection{Notation and conventions}
\la{notation}
Throughout this paper, $ k $ denotes a base field of characteristic zero. An unadorned tensor product
$\, \otimes \,$ stands for the tensor product $\, \otimes_k \,$ over $k$. An algebra means an
associative $k$-algebra with $1$; the category of such algebras is denoted $ \Alg_k $. Unless stated otherwise, all differential graded (DG) objects are equipped with differentials of degree $-1$, and the Koszul sign rule is systematically used. The categories of complexes, DG algebras and commutative DG algebras over $k$ are denoted $ \Com_k $, $\,\DGA_k $ and $ \cDGA_k $, respectively.
As usual, $ \Alg_k $ is identified with the full subcategory of
$ \DGA_k $ consisting of DG algebras with a single nonzero component in degree $0$. We call a DG
algebra free if its underlying graded algebra is free, i.e. isomorphic to
the tensor algebra $ T_k V $ of a graded $k$-vector space $V$. Following  \cite{L}, we denote
the graded symmetric algebra of $V$ by $ \bL(V) $: thus, $\, \bL(V) := \Sym_k(V_{\rm ev}) \otimes \Lambda_k(V_{\rm odd}) $, where $ V_{\rm ev}$ and $ V_{\rm odd}$ are the even and odd components of $V$, respectively.

\subsection{Augmented DG algebras}
\la{augdgas}
In this paper, we will work with augmented DG algebras. Recall
that $ A \in \DGA_k $ is {\it augmented} if it is given together with a DG algebra map $ \varepsilon: A \to k $. A morphism of augmented algebras $\, (A, \varepsilon) \to (A', \varepsilon') \,$ is a morphism $ f: A \to A' $ in $ \DGA_k $ satisfying $ \varepsilon' \circ f = \varepsilon $. The category of augmented DG algebras will be denoted $ \DGA_{k/k} $. It is easy to see that $ \DGA_{k/k} $ is equivalent to the category $ \DGA $ of nonunital DG algebras, with
$ A \in \DGA_{k/k} $ corresponding to its
augmentation ideal $\, \bar{A} := \Ker(\varepsilon) \,$. Similarly, we define the (equivalent) categories $\cDGA_{k/k}$ and $ \cDGA $ of commutative DG algebras: augmented and nonunital, respectively.

\subsubsection{Model structures}
The categories $\DGA_k$ and $\cDGA_k$ carry natural model structures in the sense of Quillen \cite{Q1, Q2}. The weak equivalences in these model categories are the quasi-isomorhisms and the fibrations are the degreewise surjective maps. The cofibrations are characterized in abstract terms: as morphisms satisfying the left lifting property with respect to the acyclic fibrations (see \cite{H}). The model structures on $\DGA_{k}$ and $\cDGA_{k}$ naturally induce model structures on the corresponding categories of augmented algebras ({\it cf.} \cite[3.10]{DS}). In particular, a morphism $ f\,:\,A \rar  B$ in $\DGA_{k/k}$ is a weak equivalence (resp., fibration; resp., cofibration) iff $ f\,:\,A \rar B$ is a weak equivalence (resp., fibration; resp., cofibration) in $\DGA_k $. All objects in $ \DGA_{k} $ and $ \DGA_{k/k} $ are fibrant. The cofibrant objects in $ \DGA_{k/k} $ can be descirbed more explicitly than in $ \DGA_k $: by a theorem of Lef\`evre (see \cite[Theorem~4.3]{Ke}), every cofibrant
$ A \in \DGA_{k/k} $ is isomorphic to a retract of $ \boldsymbol{\Omega}(C) $, where $ \boldsymbol{\Omega}(C) $ is the cobar construction of an augmented conilpotent co-associative DG coalgebra $ C $.

\subsubsection{Homotopies}
\la{A.2}
Recall that, in an abstract model category $ \C $, a homotopy 
(more precisely, right homotopy) between two morphisms $\,f,g \in \Hom_{\C}(A, B)\,$ 
is defined to be a map $\,h:\, A \to B^I $ making the following diagram commutative:
\begin{equation*}
\begin{diagram}[small, tight]
  &           &   A\\
  & \ldTo^{f} & \dDotsto^{h} & \rdTo^{g} & \\
  B & \lOnto^{p_1} & B^I & \rOnto^{p_2} & B
\end{diagram}
\end{equation*}
where $ B^I $ is a path object of $B$. If $A$ is cofibrant and 
$ B $ is fibrant in $ \C $, homotopy defines an equivalence relation 
$ \sim $ on $ \Hom_{\C}(A, B) $, and we say that $f$ and $g$ are {\it homotopic} 
if $ f \sim g $ (see \cite[Sect.~4]{DS}).

For DG algebras, there is an explicit construction of homotopies 
(the so-called $M$-homotopies), which we now describe in the augmented case 
({\it cf.} \cite[Appendix~{B.4}]{BKR}).
Recall that for $ A \in \DGA_{k/k}$, $ \bar{A} \subset A $ denotes the augmentation ideal. If $x$ is a homogeneous variable of any degree, we define the 2-term complex
$ V_x := [0 \rar k.x \xrightarrow{d} k.dx \rar 0] $ and let
$ T(V_x) \in \DGA_{k/k} $ be its tensor algebra over $k$ equipped with the canonical augmentation. If $ \deg(x) = 1 $, we also define $ \Omega := \bSym(V_x) \in \cDGA_{k/k} $, which is the algebraic de Rham complex of the affine line (though with differential of degree $-1$).
Now, we say that an acyclic cofibration in $\DGA_{k/k}$ is {\it special} if it is of the form $ A \into A \ast_k (\amalg_{\lambda \in I} T(V_{x_{\lambda}}))$ for some indexing set $I$. It is easy to see that every morphism $f$ in $\DGA_{k/k}$ factors as $f=p\,i$, where $p$ is a fibration and $i$ is a special acyclic cofibration ({\it cf.} Lemma~\ref{lA1.2.1} in Section~\ref{A.1.2}).
\begin{lemma} \la{lA2.1}
Let $i: A \into B:= A \ast_k (\amalg_{\lambda \in I} T(V_{x_{\lambda}}))$ be a special acyclic cofibration in $\DGA_{k/k}$. There is a morphism $p:B \rar A$ in $\DGA_{k/k}$ such that $pi=\id_A$ and $ip$ is homotopic to $\id_B$ via a homotopy $h:B \rar B \otimes \Omega$ such that $h(\bar{B}) \subset \bar{B} \otimes \Omega$.
\end{lemma}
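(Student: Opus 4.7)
The plan is to construct $p$ and $h$ explicitly, exploiting the coproduct structure $B = A \ast_k (\amalg_{\lambda \in I} T(V_{x_\lambda}))$ at every step.

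For the retraction $p$, I would use the universal property of the coproduct in $\DGA_{k/k}$: a morphism $B \to A$ is determined by its restrictions to the factors. I set $p|_A := \id_A$ and, for each $\lambda$, define $p|_{T(V_{x_\lambda})}$ to be the composition of the canonical augmentation $T(V_{x_\lambda}) \twoheadrightarrow k$ with the unit $k \hookrightarrow A$ (so both $x_\lambda$ and $dx_\lambda$ map to $0$). These augmented pieces glue into a morphism $p : B \to A$ in $\DGA_{k/k}$, and $p i = \id_A$ holds because $p$ fixes $A$ pointwise.

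For the homotopy $h : B \to B \otimes \Omega$, I would assemble $h$ similarly from its restrictions to the coproduct factors. On $A$ set $h(a) := a \otimes 1$. On each $T(V_{x_\lambda})$, freeness reduces the specification to the single element $h(x_\lambda) \in (B \otimes \Omega)_{|x_\lambda|}$; the value on $dx_\lambda$ is then forced by DG compatibility, $h(dx_\lambda) = d(h(x_\lambda))$. I would take the simple formula
\[
h(x_\lambda)\ :=\ x_\lambda \otimes (1 - dx),
\]
which yields $h(dx_\lambda) = dx_\lambda \otimes (1 - dx)$, using that $d(dx) = d^2(x) = 0$ in $\Omega$ and that $1 - dx \in \Omega_0$. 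These restrictions assemble via the coproduct into a DG algebra morphism $h : B \to B \otimes \Omega$ that visibly preserves augmentations. The containment $h(\bar{B}) \subset \bar{B} \otimes \Omega$ then follows at once: every generator of the two-sided ideal $\bar{B}$ — namely the elements of $\bar{A}$ and the $x_\lambda, dx_\lambda$ — maps into the two-sided ideal $\bar{B} \otimes \Omega$ of $B \otimes \Omega$, and $h$ is an algebra map.

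The substantive step is identifying $h$ as a right homotopy from $\id_B$ to $ip$ in the $M$-homotopy sense of \cite[Appendix~B.4]{BKR}. For this I would exhibit two DG algebra projections $\pi_0, \pi_1 : B \otimes \Omega \to B$ with $\pi_0 h = \id_B$ and $\pi_1 h = ip$; the natural choices are $\pi_0 = \id_B \otimes \varepsilon$ and $\pi_1 = ip \otimes \varepsilon$, where $\varepsilon : \Omega \to k$ denotes the canonical augmentation. The endpoint identities reduce on generators to the calculations $\pi_0 h(x_\lambda) = x_\lambda \cdot \varepsilon(1-dx) = x_\lambda$ and $\pi_1 h(x_\lambda) = ip(x_\lambda) \cdot \varepsilon(1-dx) = 0 = ip(x_\lambda)$, and extend multiplicatively to all of $B$. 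The main potential obstacle is matching conventions with \cite{BKR} — confirming that $(\pi_0, \pi_1)$ play the role of the path-object projections used there, and checking that no signs have been overlooked in the induced formula for $h$ on $dx_\lambda$.
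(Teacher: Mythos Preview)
Your construction of $p$ is fine and matches the paper. The homotopy, however, does not work.

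The decisive error is your choice of projections. For $h:B\to B\otimes\Omega$ to be a right homotopy, $B\otimes\Omega$ must be a \emph{fixed} path object for $B$: a factorization $B\xrightarrow{\sim} B\otimes\Omega \to B\times_k B$ of the diagonal, with the two projections $p_0,p_1$ coming from two distinct DG algebra maps $\mathrm{ev}_0,\mathrm{ev}_1:\Omega\to k$. You instead take $\pi_0=\id_B\otimes\varepsilon$ and $\pi_1=ip\otimes\varepsilon$, using the \emph{same} augmentation $\varepsilon$ but pre-composing with $ip$ on the $B$ factor. This is circular: the composite $B\to B\otimes\Omega\xrightarrow{(\pi_0,\pi_1)}B\times_k B$ is $(\id,ip)$, not the diagonal, so $(B\otimes\Omega,\pi_0,\pi_1)$ is not a path object at all. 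Worse, by the same reasoning the constant map $b\mapsto b\otimes 1$ together with $\pi_1^{(f)}:=f\otimes\varepsilon$ would ``prove'' that \emph{every} endomorphism $f$ of $B$ is homotopic to $\id_B$. This is not a convention issue to be matched with \cite{BKR}; it is a genuine gap.

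There is a related problem with the formula $h(x_\lambda)=x_\lambda\otimes(1-dx)$. In the de Rham algebra $\Omega$ of the affine line (with coordinate $t$ of degree~$0$ and $dt$ of degree~$-1$), the element $dt$ is odd, so $1-dt$ is not homogeneous and your $h$ is not a graded map. If instead you read $\Omega$ literally as $\bSym(V_x)$ with $|x|=1$ and $|dx|=0$, then every DG algebra map $\Omega\to k$ must kill $x$ and hence $dx=d(x)$, so there is only the single augmentation $\varepsilon$ and no second evaluation map exists --- again no path object.

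The paper's argument uses the genuine de Rham $\Omega=k[t,dt]$ with its two evaluations $\mathrm{ev}_0,\mathrm{ev}_1:\Omega\to k$ at $t=0,1$. One sets $\phi_t(x_\lambda)=t\,x_\lambda$, so that $h(x_\lambda)=x_\lambda\otimes t$ (and then $h(dx_\lambda)=dx_\lambda\otimes t + (\pm)\,x_\lambda\otimes dt$ by compatibility with $d$); the endpoints $(\mathrm{ev}_0,\mathrm{ev}_1)\circ h$ are then $ip$ and $\id_B$. Replacing your $1-dx$ by $1-t$ would give the same homotopy up to swapping endpoints, but note that $h(dx_\lambda)$ then necessarily acquires a $dt$-term, contrary to your computation.
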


\begin{proof}
The morphism $p$ is obtained by setting $p(x_{\lambda})=p(dx_{\lambda})=0$ for all $\lambda \in I$ and $p|_A=\id_A$. Clearly, $p\,i=\id_A$. We construct a (polynomial) homotopy between $ip$ and $\id_B$ as follows. Set $\phi_t|_{A}=i$ and let $$\phi_t(x_{\lambda})=t.x_{\lambda}\,,\,\,\,\phi_t(dx_{\lambda})=t.dx_{\lambda} \,\,\, \forall\,\lambda \in I\mathrm{.}$$
$\phi_t$ extends to a morphism $\phi_t\,:\,B \rar B$ in $\DGA_{k/k}$. Let $s_t$ be the unique $A$-linear $\phi_t$-derivation from $B$ to $B$ such that
$$s_t(x_{\lambda})=0\,,\,\,\,\,\,s_t(dx_{\lambda}) = x_{\lambda}\,\,\,\forall\,\lambda \in I \mathrm{.}$$
Then, $\phi_0=ip$, $\phi_1=\id_B$ and $\frac{d}{dt}(\phi_t)=[d,s_t]$. Further, by construction, $\phi_t$ is a morphism in $\DGA_{k/k}$ and $s_t(\bar{B}) \subset \bar{B}$ for all $t$. Hence, the homotopy
$$h:B \rar B \otimes \Omega\,,\,\,\,\,\, \alpha \mapsto \phi_t(\alpha)+dt.s_t(\alpha) $$
satisfies $h(\bar{B}) \subset \bar{B} \otimes \Omega$.
\end{proof}

The following proposition gives an explicit description of homotopies
in $ \DGA_{k/k} $.

\begin{prop} \la{pA2.1}
Let $f,g\,:\, A \to B $ be two morphisms in $\DGA_{k/k}$, with $A$ cofibrant. If $ f \sim g $, 
then there exists $h\,:\,A \rar B \otimes \Omega$ in $\DGA_k$ such that $h(0)=f,\, h(1)=g$ and $ h(\bar{A}) \subset \bar{B} \otimes \Omega\,$.
\end{prop}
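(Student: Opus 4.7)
The strategy is to reduce Proposition~\ref{pA2.1} to the explicit polynomial homotopy construction of Lemma~\ref{lA2.1} via a well-chosen path object for $B$ in $\DGA_{k/k}$.

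Using the factorization cited in the paragraph preceding this proposition (Lemma~\ref{lA1.2.1}), first factor the diagonal morphism $B \to B\times_k B$ in $\DGA_{k/k}$ as a special acyclic cofibration $\iota\colon B \into B^I$ followed by a fibration $(\pi_0,\pi_1)\colon B^I \onto B\times_k B$. Then $B^I$ is a good path object for $B$ in $\DGA_{k/k}$, and crucially $\iota$ has precisely the form to which Lemma~\ref{lA2.1} applies. Since $A$ is cofibrant and every object of $\DGA_{k/k}$ is fibrant, the assumption $f\sim g$ produces a morphism $H\colon A\to B^I$ in $\DGA_{k/k}$ with $\pi_0 H=f$ and $\pi_1 H=g$.

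Next, apply Lemma~\ref{lA2.1} to $\iota$ to obtain a retraction $\rho\colon B^I\to B$ (with $\rho\iota=\id_B$) together with a polynomial homotopy $K\colon B^I\to B^I\otimes\Omega$ between $\iota\rho$ and $\id_{B^I}$ satisfying $K(\overline{B^I})\subset \overline{B^I}\otimes\Omega$. Composing, define
$$
h^{(i)} := (\pi_i\otimes\id_\Omega)\circ K\circ H\colon A \to B\otimes\Omega, \qquad i=0,1.
$$
Then $h^{(0)}$ is a polynomial homotopy from $\rho H$ to $f$ and $h^{(1)}$ is one from $\rho H$ to $g$, and each maps $\bar A$ into $\bar B\otimes\Omega$ since every constituent ($H$, $K$, and the $\pi_i$) preserves augmentation ideals. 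The desired $h$ is then obtained by reversing $h^{(0)}$ and concatenating it with $h^{(1)}$, so as to produce a single polynomial homotopy $h\colon A\to B\otimes\Omega$ from $f$ to $g$ with $h(\bar A)\subset \bar B\otimes\Omega$; both the reversal and the concatenation can be realized as DG algebra maps by standard reparametrizations of $\Omega$ in characteristic zero.

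\textbf{Main obstacle.} The principal subtlety is in the final splicing step, which amounts to verifying that the $\Omega$-polynomial-homotopy relation on augmented DG algebra maps is transitive. In characteristic zero this can be handled by a polynomial reparametrization of $\Omega$, but the most elegant alternative is to make a more careful choice of the path object $B^I$ in the very first step so that the retraction $\rho$ produced by Lemma~\ref{lA2.1} coincides with one of the projections $\pi_0,\pi_1$ (for instance, by taking $B^I = B\ast_k T_k(V)$ and designing $\pi_0$ to kill the extra generators so that $\pi_0 = \rho$). Then $\rho H$ equals $f$ outright, and $h := h^{(1)}$ is already the desired polynomial homotopy—avoiding the splicing altogether.
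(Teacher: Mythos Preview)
Your overall strategy --- factoring the diagonal as a special acyclic cofibration $\iota\colon B \hookrightarrow B^I$ followed by a fibration, lifting the homotopy to $H\colon A \to B^I$, applying Lemma~\ref{lA2.1} to produce $\rho$ and $K$, and forming the two partial homotopies $h^{(i)} = (\pi_i \otimes \id_\Omega)\circ K \circ H$ --- coincides with the paper's argument step for step. The only point of divergence is how the two partial homotopies are spliced.

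A word of caution on your option~(a): reversal is indeed realized by the reparametrization $t\mapsto 1-t$, but \emph{concatenation} of two polynomial homotopies agreeing at an endpoint is not achieved by any single DG algebra endomorphism of $\Omega$; the naive piecewise construction is not polynomial, and something more is genuinely required here. The paper performs the splice via an explicit composite involving the multiplication map $\mu\colon\Omega\otimes\Omega\to\Omega$. Your option~(b), by contrast, is correct and in fact cleaner than the paper's route: take the extra generators of $B^I = B \ast_k(\amalg_\lambda T(V_{x_\lambda}))$ to be indexed by a homogeneous $k$-basis $\{b_\lambda\}$ of $\bar B$, and define the fibration $q\colon B^I\to B\times_k B$ by $q|_B=\Delta_B$, $q(x_\lambda)=(0,b_\lambda)$, $q(dx_\lambda)=(0,db_\lambda)$. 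Then $q$ is a surjective morphism in $\DGA_{k/k}$, and $\pi_0=p_1q$ annihilates every $x_\lambda$ and $dx_\lambda$, so $\pi_0=\rho$ by the explicit description of $\rho$ in the proof of Lemma~\ref{lA2.1}. With this choice one has $h^{(1)}(0)=\pi_1\iota\rho H=\rho H=\pi_0 H=f$ and $h^{(1)}(1)=\pi_1 H=g$, so $h:=h^{(1)}$ is already the required homotopy and the splicing step disappears altogether.
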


\begin{proof}
Let $B \stackrel{i}{\into} B^I \stackrel{q}{\onto} B \times_k B$ be a factorization of the diagonal map $ \Delta_B:\, 
B \to B \times_k B $ in $\DGA_{k/k} $, such that $i$ is a special acyclic cofibration and $q$ is a fibration. By definition, $ B^I $ is a very good path object for $ B $ ({\it cf.} \cite[Sect.~4.12]{DS}). If $A$ is cofibrant, any very good path object on $B$ gives a right homotopy between (homotopic) morphisms from $A$ to $B$. Hence, if $ f \sim g $, there exists $h: A \rar B^I$ such that $p_1qh=f$ and $p_2qh=g$, where $p_1,p_2:B \times_k B \rar B$ are the natural projections in $\DGA_{k/k}$. By Lemma~\ref{lA2.1}, there exists $p:B^I \rar B$ such that $pi=\id_B$ and
$ip$ is homotopic to $\id_{B^I}$ via a homotopy $h':B^I \rar B^I \otimes \Omega$ such that $h'(\overline{B^I}) \subset \overline{B^I} \otimes \Omega$. Hence, $f=p_1qh$ is homotopic to
$p_1q(ip)h=ph$ via a homotopy $h_1: A \rar B \otimes \Omega$ such that $h_1(\bar{A}) \subset \bar{B} \otimes \Omega$. Similarly, $p_2q(ip)h=ph$ is homotopic to
$g=p_2qh$ via a homotopy $h_2: A \rar B \otimes \Omega$ such that $h_2(\bar{A}) \subset \bar{B} \otimes \Omega$. The required homotopy $h:A \rar B\otimes \Omega$ is given by the composition
$$\begin{diagram}[small] A & \rTo^{h_1} & B \otimes \Omega & \rTo^{h_2 \otimes \id_{\Omega}} & B \otimes \Omega \otimes \Omega & \rTo^{\id_B \otimes \mu} &B \otimes \Omega \end{diagram}\ ,$$
where $\mu $ denotes the multiplication map on $\Omega$. Clearly, $h(\bar{A}) \subset \bar{B} \otimes \Omega$.
\end{proof}

\subsubsection{Remark} The above proposition is analogous to a
well-known description of homotopies in $ \DGA_k $
(see, e.g., \cite[Prop.~3.5]{FHT} or \cite[Prop.~B.2]{BKR}).
However, the fact that $ h $ can be chosen to respect
augmentations (the last condition in Proposition~\ref{pA2.1})
does not seem to follow automatically from these results.

\subsection{Derived representation schemes}
\la{S1.2}
In this section, we recall the basic construction of derived representation schemes from \cite{BKR}.

\subsubsection{The representation functor} \la{S1.2.1}
For an integer $ n \ge 1 $, denote by $ \M_n(k) $ the algebra of $ n \times n $ matrices with entries in $ k $ and define the following functor
\begin{equation}\la{root}
\sqrt[n]{\,\mbox{--}\,}\,:\,\DGA_k \rar \DGA_k\ ,\quad
A \mapsto  [A \ast_k \M_n(k)]^{\M_n(k)}\ ,
\end{equation}
where $\,A \,\ast_k\, \M_n(k) \,$ is the free product (coproduct) in $ \DGA_k $ and $\,[\,\ldots\,]^{\M_n(k)} $ stands for the (graded) centralizer of $ \M_n(k) $ as the subalgebra in $\,A \,\ast_k\, \M_n(k) \,$. Next, recall that the forgetful functor $\, \cDGA_k \to \DGA_k \,$ has a natural left adjoint that assigns to a DG algebra $ A $ its maximal commutative quotient:
\begin{equation}\la{ab}
(\,\mbox{--}\,)_{\nn} :\ \DGA_k \rar \cDGA_k \ ,\quad A \mapsto A_\nn := A/\langle [A,A]\rangle\ .
\end{equation}
Combining  \eqref{root} and \eqref{ab}, we define
\begin{equation}
\la{rootab}
(\,\mbox{--}\,)_n :\ \DGA_k \rar \cDGA_k\ ,\quad A \mapsto A_n := (\!\sqrt[n]{A})_{\nn}\ .
\end{equation}
The following theorem is a special case of \cite[Theorem~2.1]{BKR}.
\begin{theorem}
\la{nS2.1t1}
For any $\,A \in \DGA_k\,$, the commutative DG algebra $\,A_n\,$ (co)represents the functor
\begin{equation}\la{rep}
\Rep_n(A):\ \cDGA_k \to \Sets\ ,\quad B \mapsto \Hom_{\DGA_k}(A,\, \M_n(B))\ .
\end{equation}
\end{theorem}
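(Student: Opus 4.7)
The plan is to exhibit a natural bijection $\Hom_{\cDGA_k}(A_n, B) \cong \Hom_{\DGA_k}(A, \M_n(B))$ by composing three elementary adjunction-type isomorphisms, each corresponding to one of the constructions that go into the definition of $A_n = (\!\sqrt[n]{A})_{\nn}$.

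First, I would unwind the abelianization step. Since the functor $(-)_\nn : \DGA_k \to \cDGA_k$ in \eqref{ab} is by construction left adjoint to the forgetful inclusion $\cDGA_k \hookrightarrow \DGA_k$, one gets immediately
\[
\Hom_{\cDGA_k}(A_n, B) \;=\; \Hom_{\cDGA_k}\bigl((\!\sqrt[n]{A})_{\nn}, B\bigr) \;\cong\; \Hom_{\DGA_k}(\!\sqrt[n]{A}, B),
\]
where on the right $B$ is regarded merely as a (graded-commutative) DG algebra. This reduces the problem to showing a natural bijection
\[
\Hom_{\DGA_k}(\!\sqrt[n]{A}, B) \;\cong\; \Hom_{\DGA_k}(A, \M_n(B)).
\]

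Second, I would apply the universal property of the coproduct in $\DGA_k$. A map $A \to \M_n(B)$ is the same as a map $A \ast_k \M_n(k) \to \M_n(B)$ whose restriction to the $\M_n(k)$-factor is the canonical inclusion $\M_n(k) \hookrightarrow \M_n(k) \otimes B = \M_n(B)$ (this last identification uses that $B$ is commutative, so $\M_n(B)\cong\M_n(k)\otimes B$ as DG algebras). Thus
\[
\Hom_{\DGA_k}(A, \M_n(B)) \;\cong\; \Hom_{\M_n(k)/\DGA_k}\bigl(A \ast_k \M_n(k),\, \M_n(B)\bigr),
\]
the Hom-set in the category of DG algebras under $\M_n(k)$.

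Third, I would invoke the classical fact (in its evident graded-DG form) that for commutative $B$ the centralizer of $\M_n(k) \subset \M_n(k)\otimes B$ equals $1 \otimes B \cong B$, because the centralizer of $\M_n(k)$ in itself is $k$. Consequently, maps of DG algebras under $\M_n(k)$ from $A\ast_k\M_n(k)$ into $\M_n(B)$ correspond bijectively, via restriction to the centralizer of $\M_n(k)$, to DG algebra maps
\[
[A\ast_k\M_n(k)]^{\M_n(k)} \;=\; \sqrt[n]{A} \;\longrightarrow\; B,
\]
which is the required bijection. The main subtlety, and the only step that really deserves care, is this last identification of the centralizer: one must check that it is exact at the level of homogeneous components of the coproduct (using that $\M_n(k)$ is concentrated in degree zero, so no Koszul signs intervene and the classical argument applies verbatim), and verify that restriction to the centralizer is inverse to the extension-of-scalars map sending a DG algebra morphism $\sqrt[n]{A}\to B$ to the composite $A\ast_k\M_n(k)\to \M_n(k)\otimes \sqrt[n]{A}\to \M_n(k)\otimes B = \M_n(B)$. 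Naturality in $B$ is evident at each step, which gives the co-representability statement.
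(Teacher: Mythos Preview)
The paper does not actually prove this statement; it merely records it as a special case of \cite[Theorem~2.1]{BKR}. Your three-step argument is the natural one (and is essentially what underlies that reference), and it is correct in outline.

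The one point that deserves to be made explicit is in your third step. The centralizer computation you invoke --- that $Z_{\M_n(B)}(\M_n(k)) = 1\otimes B \cong B$ for commutative $B$ --- gives a well-defined restriction map from $\Hom_{\M_n(k)/\DGA_k}(A\ast_k\M_n(k),\,\M_n(B))$ to $\Hom_{\DGA_k}(\sqrt[n]{A},\,B)$. But for the inverse you write a composite beginning with an arrow $A\ast_k\M_n(k)\to \M_n(k)\otimes\sqrt[n]{A}$ whose existence you do not explain. What is really being used here is that the multiplication map
\[
\M_n(k)\otimes_k \sqrt[n]{A}\ \longrightarrow\ A\ast_k\M_n(k)
\]
is an \emph{isomorphism} of DG algebras (not merely that its target has the expected centralizer). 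This is the key structural fact: since $\M_n(k)$ is central simple over $k$ and sits in degree zero, any DG algebra $R$ containing $\M_n(k)$ with the same unit is a module over $\M_n(k)^{\rm e}\cong \M_{n^2}(k)$ and hence decomposes as $\M_n(k)\otimes_k R^{\M_n(k)}$. Once you state this (or equivalently, that $\sqrt[n]{\,\mbox{--}\,}$ is right adjoint to $\M_n(\mbox{--})$ on $\DGA_k$), your bijection and its naturality follow immediately.
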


\noindent
Theorem~\ref{nS2.1t1} implies that there is a bijection
\begin{equation}
\la{eS1.2.1}
\Hom_{\DGA_k}(A,\M_n(B)) \,=\, \Hom_{\cDGA_k}(A_n, B) \ ,
\end{equation}
functorial in $ A \in \DGA_k $ and $B \in \cDGA_k $. The algebra $\, A_n \,$ should be thought of as the coordinate ring $\, k[\Rep_n(A)] \,$ of an affine DG scheme parametrizing the $n$-dimensional $k$-linear representations of $A$. Letting $\,B = A_n\,$ in \eqref{eS1.2.1}, we get a canonical DG algebra map
$$
\pi_n: A \to \M_n(A_n) \ ,
$$
which is called the {\it universal $n$-dimensional representation} of $A$.

Now, observe that for $ A \in \DGA_{k/k} $, the DG algebra $ A_n $ is naturally augmented, with augmentation map $\, \varepsilon_n : A_n \to k $ coming from \eqref{rootab} applied to the augmentation map of $A$. This defines a functor $\,\DGA_{k/k} \to \cDGA_{k/k} $, which we again denote $ (\mbox{--}\,)_n $. Let $\overline{\M}_n(\mbox{--})\,:\,\cDGA_{k/k} \rar \DGA_{k/k}$ denote the functor $B \mapsto k \oplus \M_n(\bar{B})$. The following theorem is immediate from~\cite[Theorem~2.2]{BKR}.

\begin{theorem}
\la{nS2.1t2}
(a) The functors $(\mbox{--})_n\,:\, \DGA_{k/k} \rightleftarrows \cDGA_{k/k} \,:\, \overline{\M}_n(\mbox{--}) $ form a Quillen pair.

(b) The functor $\, ( \mbox{--}\,)_n\,:\,\DGA_{k/k} \rar \cDGA_{k/k}$ has a total left derived functor defined by
$$
\L(\,\mbox{--}\,)_n:\,\Ho(\DGA_{k/k}) \to \Ho(\cDGA_{k/k}) \ ,
\quad A  \mapsto  (QA)_n\ ,
$$
where $QA \stackrel{\sim}{\twoheadrightarrow} A$ is any cofibrant resolution of $A$ in $\DGA_{k/k}$.

(c) For any $A$ in $\DGA_{k/k}$ and $B$ in $\cDGA_{k/k}$, there is a canonical isomorphism
$$ \Hom_{\Ho(\cDGA_{k/k})}(A_n \,,\, B)\,\cong\, \Hom_{\Ho(\DGA_{k/k})}(A\,,\,\overline{\M}_n(B))\,\text{.}$$
\end{theorem}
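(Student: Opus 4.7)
The plan is to deduce Theorem~\ref{nS2.1t2} by transferring the unaugmented Quillen pair
$(\mbox{--})_n \colon \DGA_k \rightleftarrows \cDGA_k \colon \M_n(\mbox{--})$
of \cite[Theorem~2.2]{BKR} to the augmented setting. Because the model structures on $\DGA_{k/k}$ and $\cDGA_{k/k}$ are inherited from their unaugmented counterparts, with weak equivalences, fibrations and cofibrations all detected on the underlying DG morphisms, the Quillen-pair axioms may be verified compatibly with augmentations and then the standard derived formalism applies verbatim.

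For part (a), I would first establish the adjunction. The augmentation of $A\in\DGA_{k/k}$ induces, by naturality of the construction~\eqref{rootab}, the augmentation $\varepsilon_n\colon A_n\to k$ mentioned in the excerpt. Realizing $\overline{\M}_n(B) = k\oplus \M_n(\bar{B})$ as the fibered product $\M_n(B)\times_{\M_n(k)} k$ along the scalar inclusion $k\hookrightarrow \M_n(k)$, a morphism $\phi\in\Hom_{\DGA_{k/k}}(A,\overline{\M}_n(B))$ is precisely a DG algebra map $A\to\M_n(B)$ such that $\M_n(\varepsilon_B)\circ\phi$ lands in the scalars and coincides there with $\varepsilon_A$. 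Under the unaugmented bijection~\eqref{eS1.2.1}, such $\phi$ correspond exactly to those commutative DG algebra maps $\psi\colon A_n\to B$ satisfying $\varepsilon_B\circ\psi=\varepsilon_n$, yielding the desired natural isomorphism
\[
\Hom_{\DGA_{k/k}}(A,\overline{\M}_n(B)) \;\cong\; \Hom_{\cDGA_{k/k}}(A_n,B).
\]
Since fibrations and weak equivalences in $\cDGA_{k/k}$ are, respectively, surjections and quasi-isomorphisms of the underlying morphisms, and since $\overline{\M}_n(f) = \id_k\oplus \M_n(\bar{f})$ manifestly preserves both (as $\M_n(\mbox{--})$ is exact and preserves surjections), this adjunction is a Quillen pair.

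Parts (b) and (c) will then follow formally. The total left derived functor $\L(\mbox{--})_n$ exists and is computed on any cofibrant replacement $QA$ by Quillen's general theorem; its independence (up to quasi-isomorphism) of the chosen resolution $QA \twoheadrightarrow A$ uses Ken Brown's lemma applied to $(\mbox{--})_n$, which preserves weak equivalences between cofibrant objects. The derived adjunction in (c) is the standard consequence of the Quillen-pair property, together with the observation that every object of both $\DGA_{k/k}$ and $\cDGA_{k/k}$ is fibrant, so no further replacement on the right is required.

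The only nontrivial piece of bookkeeping, and the closest thing to an obstacle, is verifying that the augmentation on $A_n$ defined via~\eqref{rootab} coincides, under the bijection~\eqref{eS1.2.1}, with the augmentation naturally induced by the scalar augmentation on $\overline{\M}_n(B)$. This is a direct functoriality check, after which the rest is routine Quillen machinery.
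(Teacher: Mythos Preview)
Your proposal is correct and follows precisely the route the paper intends: the paper gives no proof of its own, simply declaring the theorem ``immediate from~\cite[Theorem~2.2]{BKR}'', and your argument is exactly the natural unpacking of that citation---transfer the unaugmented Quillen adjunction to the slice categories $\DGA_{k/k}$ and $\cDGA_{k/k}$, check the right adjoint preserves fibrations and acyclic fibrations, and invoke the standard derived formalism. Your fibered-product description of $\overline{\M}_n(B)$ and the translation of the augmentation condition across the bijection~\eqref{eS1.2.1} are both correct, and the rest is indeed routine Quillen machinery.
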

\vspace{1ex}

\begin{definition} Given $ A \in \Alg_{k/k} $ we define
$\, \DRep_n(A) := \L(QA)_n \,$, where $ QA \sonto A $ is a cofibrant resolution of $ A $ in $\DGA_{k/k} $. The homology of $ \DRep_n(A) $
is an augmented (graded) commutative algebra, which is independent of
the choice of resolution (by Theorem~\ref{nS2.1t2}). We set
\begin{equation}
\la{rephom} \mathrm{H}_{\bullet}(A,n)\,:=\,\mathrm{H}_{\bullet}[\DRep_n(A)] \end{equation}
and call \eqref{rephom} the {\it $n$-dimensional representation homology} of $A$.
\end{definition}

\vspace{1ex}

By \cite[Theorem~2.5]{BKR}, if $ A \in \Alg_k $ then there is an isomorphism of algebras $\, \H_0(A,n) \cong A_n $, hence in this case $ \DRep_n(A) $ may indeed be
viewed as (the representative of) a `higher' derived functor of \eqref{rep}.

\subsubsection{$\GL_n$-invariants}
\la{glinv}
The group $ \GL_n(k) $ acts naturally on  $\,A_n\,$ by DG algebra
automorphisms. Precisely, each $ g \in \GL_n(k) $ defines a unique automorphism of $\,A_n\,$ corresponding under \eqref{eS1.2.1} to the composite map
$$
A \xrightarrow{\pi_n} \M_n(A_n) \xrightarrow{\text{Ad}(g)} \M_n(A_n)\ ,
$$
where $ \pi_n $ is the universal $n$-dimensional representation of $A$. This action is natural in $A$ and thus defines a functor
\begin{equation}
\la{vgl}
(\,\mbox{--}\,)_n^{\GL} :\ \DGA_{k/k} \to \cDGA_{k/k}\ ,\quad
A \mapsto A_n^{\GL_n}\ ,
\end{equation}
which is a subfunctor of the representation functor $ (\,\mbox{--}\,)_n $.
On the other hand, there is a natural action of $ \GL_n(k) $ on the $n$-th
representation homology of $ A $ so we can form the invariant subaglebra
$ \H_\bullet(A, n)^{\GL_n} $. The next theorem, which is a consequence of
\cite[Theorem~2.6]{BKR}, shows that these two constructions agree.
\begin{theorem}
\la{nS2.1t5}
$(a)$ The functor $(\mbox{--})_n^{\GL}$ has a total left derived functor
$$
\L[(\,\mbox{--}\,)_n^{\GL}]:\,\Ho(\DGA_{k/k}) \to \Ho(\cDGA_{k/k})\,,\,\,\,\, A \mapsto (QA)_n^{\GL} \ .
$$

$(b)$ For any $ A \in \DGA_{k/k} $, there is a natural isomorphism of graded algebras
$$
\H_\bullet[\L(A)_n^{\GL}] \cong \H_\bullet(A, n)^{\GL_n}\ .
$$
\end{theorem}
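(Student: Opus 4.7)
The plan is to deduce both statements formally from Theorem~\ref{nS2.1t2} combined with the linear reductivity of $\GL_n(k)$ (valid since $\mathrm{char}\,k = 0$). The general principle is that for a linearly reductive group $G$ acting rationally on a chain complex $(C_\bullet, d)$, the invariants functor $(-)^{G}$ is exact, so in particular $\H_\bullet(C_\bullet^G) \cong \H_\bullet(C_\bullet)^G$, and any $G$-equivariant quasi-isomorphism descends to a quasi-isomorphism on invariants. Assuming rationality of the $\GL_n$-action in the setting at hand, the theorem is then essentially formal.

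The only substantive point is to verify that the $\GL_n$-action on $A_n$ defined in Section~\ref{glinv} is rational whenever $A$ is cofibrant in $\DGA_{k/k}$. By Lef\`evre's theorem, such an $A$ is a retract of a cobar construction $\bOmega(C) = T_k(s^{-1}\bar{C})$, which is free as a graded $k$-algebra. Writing $V = s^{-1}\bar{C}$, the universal representation $\pi_n: T_k V \to \M_n(A_n)$ is determined by the matrices $\pi_n(v) \in \M_n(A_n)$ for $v \in V$, and $A_n$ is the graded-commutative algebra freely generated by the matrix entries of $\pi_n(v)$. The $\GL_n$-action is precisely conjugation on these matrix entries, so for each $v$ the corresponding generators span a copy of the adjoint representation $\gl_n$. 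Hence $A_n$ is the graded symmetric algebra $\bL(V \otimes \gl_n)$ on a direct sum of rational finite-dimensional $\GL_n$-modules, and is itself rational as a filtered colimit of such; rationality survives passage to retracts, giving the claim for any cofibrant $A$.

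Granted rationality, part $(a)$ follows from Theorem~\ref{nS2.1t2} by a standard Brown's Lemma argument: if $f: A \sonto A'$ is a weak equivalence between cofibrant objects in $\DGA_{k/k}$, then $f_n$ is a $\GL_n$-equivariant quasi-isomorphism of rational $\GL_n$-modules, which remains a quasi-isomorphism after taking invariants. This gives existence of the total left derived functor, computed by cofibrant replacement as stated. Part $(b)$ is then immediate:
$$
\H_\bullet[(QA)_n^{\GL_n}] \,\cong\, \H_\bullet[(QA)_n]^{\GL_n} \,=\, \H_\bullet(A, n)^{\GL_n},
$$
where the first isomorphism is exactness of $\GL_n$-invariants applied to the chain complex $(QA)_n$, and the second is definition \eqref{rephom} together with Theorem~\ref{nS2.1t2}. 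The main (mild) obstacle is the rationality check in the infinitely-generated case, which is handled as above by expressing $A_n$ as a colimit of finite-dimensional rational $\GL_n$-subrepresentations.
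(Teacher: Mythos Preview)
Your argument is correct. The paper does not actually prove this theorem here; it simply records it as a consequence of \cite[Theorem~2.6]{BKR}. Your route---use that $(\,\mbox{--}\,)_n$ is left Quillen (Theorem~\ref{nS2.1t2}), verify rationality of the $\GL_n$-action on $(QA)_n$ via the explicit presentation (Theorem~\ref{comp}) for free resolutions and pass to retracts, then invoke exactness of $(\,\mbox{--}\,)^{\GL_n}$ and Brown's Lemma---is the natural one and is essentially what underlies the cited result.

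One minor remark on the comparison. Judging from how \cite[Lemma~2.5]{BKR} is invoked later in the paper (see the proof of Theorem~\ref{tS2.2}$(a)$), the approach in \cite{BKR} for part~$(a)$ proceeds via explicit $M$-homotopies: a homotopy $h: B \to B \otimes \Omega$ induces a $\GL_n$-equivariant homotopy $h_n: B_n \to B_n \otimes \Omega$, which restricts to $B_n^{\GL}$ without any appeal to reductivity. Your argument instead factors through the quasi-isomorphism $f_n$ and then applies reductivity to descend to invariants. Both are valid; the homotopy approach avoids reductivity for the existence of the derived functor (part~$(a)$), while your approach is cleaner and more uniform since reductivity is needed anyway for part~$(b)$. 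The rationality check you supply (which the homotopy approach sidesteps for $(a)$) is straightforward, and your handling of the infinitely-generated case via filtered colimits of finite-dimensional $\GL_n$-submodules is the right thing to say.
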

\noindent
If $\,A \in \Alg_{k/k} \,$, abusing notation we will often write
$ \DRep_n(A)^{\GL} $ instead of $ \L(A)_n^{\GL} $.

\subsubsection{Suspensions}
Unlike $ \DGA_k $ and $ \cDGA_k $, the model categories of augmented DG algebras are pointed: {\it i.e.}, the initial and terminal objects in $ \DGA_{k/k} $ and $ \cDGA_{k/k} $ coincide (both are equal to $k$). Hence, by \cite[\S\,I.2]{Q1}, the corresponding homotopy categories $ \Ho(\DGA_{k/k}) $ and $ \Ho(\cDGA_{k/k}) $ are equipped with {\it suspension functors} which we denote by $ \Sigma \,$. As in the category of topological spaces, $ \Sigma A $ is defined in general as the cofibre of a cofibration $\, QA \amalg QA \into \Cyl(QA) \,$, where $ \Cyl(QA) $ is a cylinder object over 
a cofibrant replacement of $ A $.  In the case of DG algebras, suspension can be computed 
as the homotopy pushout ({\it cf.} \cite[Sect.~11.3]{DS})
$$
\Sigma A \,\cong\, \L\colim\,\{k \xleftarrow{\varepsilon} A \xrightarrow{\varepsilon} k\}\ ,
$$
where $A$ is viewed as an object in $ \Ho(\DGA_{k/k}) $ or $ \Ho(\cDGA_{k/k}) $. The next result shows that the derived representation functor of Theorem~\ref{nS2.1t2} is compatible with 
$ \Sigma $ in a natural way.
\begin{theorem}
\la{susp}
For each $ n \ge 1 $, there is an isomorphism of functors
$\, \L(\,\mbox{--}\,)_n \circ \Sigma \,\cong \,\Sigma \circ \L  (\,\mbox{--}\,)_n\,$.
\end{theorem}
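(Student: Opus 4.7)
The plan is to exploit that $\L(-)_n$ is a total left derived functor of a left Quillen functor and therefore commutes with homotopy colimits, in particular with the homotopy pushouts that compute suspensions in the pointed model categories $\DGA_{k/k}$ and $\cDGA_{k/k}$.

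More concretely, I would proceed as follows. First, recall from the discussion preceding the theorem that, in both pointed model categories, suspension is computed by the homotopy pushout
\[
\Sigma A \,\cong\,\L\colim\bigl\{k \xleftarrow{\varepsilon} A \xrightarrow{\varepsilon} k\bigr\}.
\]
Second, invoke Theorem~\ref{nS2.1t2}(a): the functor $(-)_n\colon\DGA_{k/k}\to\cDGA_{k/k}$ is the left adjoint in a Quillen pair and thus descends to a left adjoint $\L(-)_n\colon\Ho(\DGA_{k/k})\to\Ho(\cDGA_{k/k})$ between the homotopy categories. A general fact about total left derived functors of left Quillen functors is that they preserve homotopy colimits; in particular they preserve homotopy pushouts. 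Applying this to the suspension diagram gives
\[
\L(\Sigma A)_n \,\cong\, \L\colim\bigl\{\L(k)_n \leftarrow \L(A)_n \rightarrow \L(k)_n\bigr\}.
\]

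Third, I would check that $\L(k)_n \cong k$ canonically in $\Ho(\cDGA_{k/k})$. Since $k$ is (trivially) cofibrant in $\DGA_{k/k}$, one has $\L(k)_n = (k)_n$, and from the definition \eqref{root}--\eqref{rootab} one computes $k\ast_k\M_n(k)=\M_n(k)$, whose $\M_n(k)$-centralizer is $Z(\M_n(k))=k$, so $(k)_n=k$, with the identity augmentation. Moreover the two maps $\L(A)_n\to\L(k)_n=k$ induced by the two copies of $\varepsilon$ coincide with the augmentation of $\L(A)_n$ (this follows from functoriality together with the construction of $\varepsilon_n$ in Section~\ref{S1.2.1}). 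Substituting this into the displayed homotopy pushout yields
\[
\L(\Sigma A)_n \,\cong\, \L\colim\bigl\{k \leftarrow \L(A)_n \rightarrow k\bigr\} \,=\,\Sigma\,\L(A)_n,
\]
and the resulting isomorphism is natural in $A$, which gives the asserted isomorphism of functors.

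The only point requiring genuine care is the identification $\L(k)_n\cong k$ compatibly with the two augmentations in the pushout diagram (so that the homotopy pushout really does compute $\Sigma\L(A)_n$); the commutation with homotopy pushouts itself is automatic from the Quillen adjunction. I do not expect this to be a serious obstacle, but it is the step on which the whole argument rests, so I would write it out carefully. Everything else is formal manipulation of left Quillen functors and of the pointed model structures on $\DGA_{k/k}$ and $\cDGA_{k/k}$.
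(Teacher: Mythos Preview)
Your proposal is correct and follows essentially the same approach as the paper. The paper's proof is a one-line citation of Theorem~\ref{nS2.1t2} together with \cite[\S I.4, Prop.~2]{Q1}, which is precisely Quillen's general statement that the total left derived functor of a left Quillen functor between pointed model categories commutes with suspension; you have simply unpacked that statement via the homotopy-pushout description of $\Sigma$ and the explicit verification that $(k)_n = k$.
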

\begin{proof}
This is an immediate consequence of Theorem~\ref{nS2.1t2} and~\cite[\S I.4, Prop.~2]{Q1}.
\end{proof}

\subsection{Explicit presentation}
\la{explpr}
Given a free DG resolution $ R $ of $ A \in \Alg_k $, the DG algebra
$ R_n \in \cDGA_k $ can be described explicitly.
Specifically, let $\{x^\alpha\}_{\alpha \in I}$ be a set of
homogeneous generators of $R$, and let $ d: R_\bullet \to R_{\bullet-1} $ be its differential. Consider a free graded algebra $ \tilde{R} $ on generators $\,\{x^{\alpha}_{ij}\,:\, 1\leq i,j\leq n\, ,\, \alpha \in I\}\,$, and define the algebra map
$$
\pi:\, R \to \M_n(\tilde{R})\ , \quad  x^\alpha \mapsto X^\alpha \ ,
$$
where $ X^\alpha := \|x^{\alpha}_{ij}\| $, $\, \alpha \in I $. Then, letting $\, \tilde{d}(x^{\alpha}_{ij}) := \| \pi(d x^{\alpha}) \|_{ij}\,$, define a differential
$ \tilde{d} $ on generators of $ \tilde{R} $ and extend it to the whole of $ \tilde{R}$ by the Leibniz rule. This makes $ \tilde{R} $ a DG algebra. The abelianization of
$\tilde{R} $ is a free (graded) commutative algebra generated by (the images of) $\,x^\alpha_{ij}\,$ and the differential on $ \tilde{R}_\nn $ is induced by $ \tilde{d}$. The next result  
is proved in \cite[Theorem~2.8]{BKR}.
\bthm
\la{comp}
There is an isomorphism of DG algebras $\, \sqrt[n]{R} \cong \tilde{R} \,$. Consequently, $\,R_n \cong \tilde{R}_\nn\,$.
\ethm
Using Theorem~\ref{comp}, one can construct a finite presentation for $ R_n $, and hence an explicit
model for $ \DRep_n(A) $, whenever a finite free resolution $ R \to A $ is available.

\subsection{Derived characters}
\la{trmaps}
Following \cite{BKR}, we now construct the trace maps relating cyclic homology to representation homology. For a DG algebra $ R $ (either unital or non-unital), we write $\, R_\n := R/[R,R] \,$, where $ [R,R] $ is the subcomplex of $R$ spanned by the (graded) commutators of elements in $R$. Using this notation, we define the functor
\begin{equation}
\la{cycf}
\FT:\,\DGA_{k/k} \to \Com_{\,k}\ ,\quad R \mapsto \bar{R}_{\n} \ ,
\end{equation}
which is called the (reduced) {\it cyclic functor}.
The next theorem, which is a well-known result due to
Feigin and Tsygan \cite{FT}, justifies this terminology.
\begin{theorem}
\la{ftt}
$ (a) $ The functor \eqref{cycf} has a total left derived
functor
$$
\L\FT:\,\Ho(\DGA_{k/k}) \to \D(k)\ ,\quad A \mapsto \FT(R)\ ,
$$
where $ R = QA $ is a(ny) cofibrant resolution of $A$ in $ \DGA_{k/k} $.

$(b)$ For $ A \in \Alg_{k/k} $, there is a natural isomorphism
of graded vector spaces
$$
\H_\bullet[\L\FT(A)] \cong \rHC_\bullet(A)\ ,
$$
where $ \rHC_\bullet(A) $ denotes the (reduced) cyclic homology of
$A$.
\end{theorem}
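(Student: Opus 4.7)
\emph{Proof plan.} For part (a), the strategy is to invoke Ken Brown's lemma, reducing to the claim that $\FT = (\overline{\cdot})_\n$ sends weak equivalences between cofibrant objects of $\DGA_{k/k}$ to quasi-isomorphisms. The essential input is Proposition~\ref{pA2.1}: for any pair of homotopic maps $f,g\colon A \to B$ with $A$ cofibrant, there exists an explicit homotopy $h\colon A \to B \otimes \Omega$ in $\DGA_k$ with $h(\bar{A}) \subset \bar{B} \otimes \Omega$ and $h(0) = f$, $h(1) = g$. Since $\Omega$ is graded commutative, $(\bar{B} \otimes \Omega)_\n \cong \bar{B}_\n \otimes \Omega$, so $h$ descends to a chain map $\bar{h}_\n\colon \bar{A}_\n \to \bar{B}_\n \otimes \Omega$. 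Using that $\Omega \simeq k$ as chain complexes (via an explicit contracting homotopy), the two compositions of $\bar{h}_\n$ with the endpoint augmentations agree on homology, hence $\H_\bullet(\bar f_\n) = \H_\bullet(\bar g_\n)$. Since every object of $\DGA_{k/k}$ is fibrant, any weak equivalence between cofibrant objects has a two-sided homotopy inverse in the model-categorical sense, and $\FT$ therefore takes it to a quasi-isomorphism, yielding the existence of $\L\FT$.

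For part (b), I plan to compute $\L\FT(A)$ using the cobar-bar cofibrant resolution. Take $R := \bOmega(\bB A) \sonto A$, where $\bB A$ is the reduced bar coalgebra of $A$ viewed as a conilpotent coaugmented DG coalgebra. By Lef\`evre's theorem (cited in Section~\ref{augdgas}), $R$ is cofibrant in $\DGA_{k/k}$ and the canonical augmentation $R \to A$ is a quasi-isomorphism. As a graded algebra, $R$ is free on $W := \overline{\bB A}[-1] = \bigoplus_{m \geq 1}(\bar{A}[1])^{\otimes m}[-1]$, so $\bar R_\n = \bar R/[\bar R, \bar R]$ identifies canonically with the space of Koszul-signed cyclic tensors $\bigoplus_{n \geq 1}(W^{\otimes n})_{\Z/n}$. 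The two components of the cobar differential on $R$ (coming from $\bB A$'s internal differential and its comultiplication) assemble, after passage to the cyclic quotient, into the Hochschild boundary $b$ and the Connes operator $B$; the resulting complex is precisely the normalized mixed-complex model for $\rCC(A)$. This yields $\H_\bullet[\L\FT(A)] \cong \rHC_\bullet(A)$.

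The principal technical obstacle lies in part (b): the sign-by-sign matching of the differential induced on $\bar R_\n$ with the classical Connes--Tsygan differential on $\rCC(A)$. This matching, which is the content of the Feigin--Tsygan theorem \cite{FT}, is standard but delicate under the Koszul sign convention and requires bookkeeping of the interplay between the bar and cobar pieces of the cobar differential after cyclic symmetrization. By contrast, part (a) is comparatively formal once the explicit augmentation-preserving homotopy of Proposition~\ref{pA2.1} is in hand.
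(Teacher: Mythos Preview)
The paper does not itself prove Theorem~\ref{ftt}; it simply refers the reader to \cite{BKR}, Section~3. So there is no ``paper's own proof'' to compare against, and your proposal should be assessed on its own merits.

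Your argument for part~(a) is correct and is essentially the standard one. Two small remarks. First, your invocation of Brown's lemma is slightly backward: Brown's lemma is what lets you \emph{reduce} from ``$\FT$ preserves all weak equivalences between cofibrants'' to ``$\FT$ preserves acyclic cofibrations between cofibrants''. Your homotopy-invariance argument (via Proposition~\ref{pA2.1} and the identification $(\bar B \otimes \Omega)_\n \cong \bar B_\n \otimes \Omega$, which is valid because $\Omega$ is graded commutative) directly establishes the stronger statement, so strictly speaking you do not need Brown's lemma at all. Second, your citation of Lef\`evre's theorem is inverted: the result quoted in Section~\ref{augdgas} says that every cofibrant object is a retract of some $\bOmega(C)$, not that every $\bOmega(C)$ is cofibrant. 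The latter is nonetheless true (and easy): $\bOmega(C)$ is free as a graded algebra, and the conilpotency filtration on $C$ makes the inclusion $k \hookrightarrow \bOmega(C)$ a Tate-type cofibration.

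For part~(b), your plan is one of the standard routes: take $R = \bOmega(\bB A)$, so that $\bar R_\n \cong \bigoplus_{n\ge 1}(W^{\otimes n})_{\Z/n}$ with $W = \overline{\bB A}[-1]$, and then identify the induced differential with the $(b,B)$-differential on the reduced cyclic bicomplex. You are right that the sign bookkeeping is the entire content here; it is carried out (in slightly different language) in \cite{FT} and in Quillen's and Loday's expositions. As a proof \emph{plan} this is fine, but be aware that writing ``the resulting complex is precisely the normalized mixed-complex model for $\rCC(A)$'' is exactly the statement to be proved, so in an actual write-up you would need to either exhibit the isomorphism of bicomplexes explicitly or give a precise reference for it (e.g.\ \cite[Sect.~2.2.7]{L} together with \cite{FT}).
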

\noindent

For a simple conceptual proof of Theorem~\ref{ftt} and its generalizations we refer to \cite{BKR}, Section~3.

\vspace{2ex}

Now, fix $ n \ge 1 $ and, for $ R \in \DGA_k $, consider the composite map
$$
R \xrightarrow{\pi_n} \M_n(R_n) \xrightarrow{\Tr} R_n
$$
where $ \pi_n $ is the universal representation of $R$ and $\,\Tr \,$ is the usual matrix trace. This map factors through $ R_\n $ and its image lies in $ R_n^{\GL} $. Hence, we get a morphism of complexes
\begin{equation}
\la{e2s1}
\Tr_n(R)_{\bullet}:\ R_{\natural} \to R_n^{\GL}\ ,
\end{equation}
which extends by multiplicativity to a map of graded commutative algebras
\begin{equation}
\la{trm}
\bSym \Tr_n(R)_{\bullet}:\ \bSym(R_{\natural}) \to R_n^{\GL}\ .
\end{equation}

For an augmented DG algebra $ R \in \DGA_{k/k} $, the natural
inclusion $\,\bar{R} \into R \,$ induces
$\, \FT(R) \to R_\n $. Composed with \eqref{e2s1} this defines a morphism of functors $\,\FT \to (\mbox{--}\,)_n^\GL \,$, which descends to a morphism of the derived functors 
from $ \Ho(\DGA_{k/k}) $ to $ \D(k) \,$ ({\it cf.} \cite[Remark~A.5.1]{BKR}):
\begin{equation}
\la{trm2}
\L\Tr_n:\, \L\FT \to \L(\,\mbox{--}\,)_n^\GL\, .
\end{equation}
Now, if $ A \in \Alg_{k/k} $ is an ordinary algebra, applying \eqref{trm2} to a cofibrant resolution  $\, R = QA \,$  of $ A $ in $ \DGA_{k/k}$, taking homology and using the identification of Theorem~\ref{ftt}$(b)$, we get natural maps
\begin{equation}
\la{trm3}
\Tr_n(A)_{\bullet} :\,\rHC_{\bullet}(A) \to
\H_{\bullet}(A,n)^{\GL_n}\ ,\quad \forall\,n \ge 0 \ .
\end{equation}
In degree zero, $ \Tr_n(A)_0 $ is induced by the obvious linear map
$ A \to k[\Rep_n(A)]^{\GL_n} $ defined by taking characters of representations. Thus, the higher components of \eqref{trm3} may be thought of as derived (or higher) characters\footnote{We refer to \cite[Sect.~4.3]{BKR} for an explicit formula evaluating \eqref{trm3} on cyclic chains.} of $n$-dimensional representations of $A$. For each $ n \ge 1 $, these characters assemble to a single homomorphism of graded commutative algebras which we denote
$$
\bSym \Tr_n(A)_{\bullet}:\,
\bSym[\rHC_{\bullet}(A)] \to
\mathrm{H}_{\bullet}(A,n)^{\GL}\ .
$$
In the present paper, we will study the behavior of $\, \bSym \Tr_n(A)_{\bullet} \,$  as $\, n \to \infty $.

\section{Procesi theorem for DG algebras}
\la{S1.3}
We will need the following result which extends a well-known theorem of Procesi (see \cite{P}).

\begin{theorem}
\la{pS1.3.1}
For any $ R \in \DGA_k $,  the trace morphism \eqref{trm} is degreewise surjective.
\end{theorem}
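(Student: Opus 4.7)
The first observation is that degreewise surjectivity concerns only the underlying graded algebra structure of $R$, so I may forget the differential and assume $R$ is a graded $k$-algebra. My plan has three stages: reduce to free graded algebras, then to finite-dimensional generating subspaces, and finally invoke a super version of Procesi's First Fundamental Theorem.

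\textbf{Stage 1: reduction to free graded algebras.} I choose a surjection $\phi: T_k V \twoheadrightarrow R$ where $V$ is the graded vector space of generators of $R$. By Theorem~\ref{nS2.1t2}, the functor $(\,\mbox{--}\,)_n$ is a left adjoint, so it preserves colimits; in particular it preserves regular epimorphisms, so $\phi_n:\,(T_k V)_n \twoheadrightarrow R_n$ is surjective. Reductivity of $\GL_n$ in characteristic zero (Reynolds operator) implies that $\phi_n$ remains surjective after passing to $\GL_n$-invariants. Since the trace morphism \eqref{trm} is natural in $R$, we have a commutative square
\begin{equation*}
\begin{diagram}[small]
\bSym\bigl((T_k V)_{\n}\bigr) & \rTo^{\bSym\Tr_n(T_k V)_\bullet} & (T_k V)_n^{\GL_n} \\
\dOnto & & \dOnto \\
\bSym(R_{\n}) & \rTo^{\bSym\Tr_n(R)_\bullet} & R_n^{\GL_n}
\end{diagram}
\end{equation*}
whose right vertical arrow is surjective. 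It therefore suffices to prove the theorem for $R = T_k V$.

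\textbf{Stage 2: reduction to finite-dimensional $V$.} Writing $V$ as a filtered union of its finite-dimensional graded subspaces $V_\lambda$, we have $T_k V = \colim_\lambda T_k V_\lambda$ in $\GrAlg_k$. The functor $(\,\mbox{--}\,)_n^{\GL_n}$ commutes with filtered colimits (the representation functor preserves them by adjunction, and $\GL_n$-invariants commute with filtered colimits because every element lies in a finitely generated subobject). Thus, any homogeneous invariant of $(T_k V)_n$ comes from $(T_k V_\lambda)_n^{\GL_n}$ for some $\lambda$, so we may assume $V$ is finite-dimensional.

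\textbf{Stage 3: super Procesi.} For $R = T_k V$ with $V$ finite-dimensional graded, Theorem~\ref{comp} identifies $R_n$ with the free graded-commutative algebra $\bSym_k(V \otimes \gl_n^{\vee})$, on which $\GL_n$ acts via the adjoint action on $\gl_n$. Fixing a homogeneous basis $\{x^\alpha\}$ of $V$ and setting $X^\alpha := \pi_n(x^\alpha)$, the trace map sends a graded cyclic word $x^{\alpha_1}\cdots x^{\alpha_k} \in R_{\n}$ to $\Tr(X^{\alpha_1}\cdots X^{\alpha_k}) \in R_n^{\GL_n}$. The theorem is thus equivalent to the graded (super) First Fundamental Theorem for $\GL_n$: the $\GL_n$-invariants of $\bSym_k(V \otimes \gl_n^{\vee})$ are generated as a graded commutative algebra by traces of products of the $X^\alpha$. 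To prove this, I decompose by the multi-degree in $V$ induced by the basis $\{x^\alpha\}$: in multi-degree $(k_1,\ldots,k_N)$ the invariants embed, via (anti)symmetrization, into the $\GL_n$-invariants of $(\gl_n^{\vee})^{\otimes K}$ (where $K = \sum_\alpha k_\alpha$). By Schur–Weyl duality, the latter invariants are spanned by permutation operators indexed by $S_K$; decomposing each permutation into cycles and pairing with the appropriate (anti)symmetrizers produces precisely products of traces of monomials in the $X^\alpha$.

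\textbf{Main obstacle.} The only substantive difficulty is the super FFT in Stage 3, since one must keep track of Koszul signs when odd basis elements of $V$ appear. However, this is essentially formal in characteristic zero: the Schur–Weyl description of $\GL_n$-invariants of $\gl_n^{\otimes K}$ is a statement about the group action alone, unaffected by the $\Z_2$-grading, and the signs enter only through the (anti)symmetrization maps, which commute with the $\GL_n$-action. A careful unraveling then shows that every invariant in a given multi-degree is a polynomial in traces of cyclic words, completing the proof.
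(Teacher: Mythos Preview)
Your proposal is correct and follows the same three-stage reduction as the paper: first to free graded algebras (using surjectivity of $(\,\mbox{--}\,)_n$ on surjections together with reductivity of $\GL_n$), then to finitely generated ones (via filtered colimits), and finally to a super First Fundamental Theorem for multilinear invariants. The only divergence is in how the last stage is handled. The paper isolates the multilinear super FFT as Lemma~\ref{lS1.3.1} and proves it by a parameter trick: writing each odd matrix variable as $Y_i = u_i Z_i$ with $u_i$ a formal odd scalar and $Z_i$ an even matrix, the invariance condition for $h$ becomes an ordinary (ungraded) invariance statement for the even function $G(X_1,\ldots,X_p,Z_1,\ldots,Z_q)$, to which the classical FFT of \cite{KP} applies directly; one then specializes back by setting $u_i = y^{(i)}_{j_i k_i}$ and $Z_i = e_{j_i k_i}$ and uses multilinearity. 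You instead appeal to Schur--Weyl duality to identify the $\GL_n$-invariants of $(\gl_n^\vee)^{\otimes K}$ with the span of the $\Tr_\sigma$, and then push the (anti)symmetrizers through. Both routes are valid; the paper's substitution trick has the virtue of being essentially sign-free (all Koszul signs are absorbed into the auxiliary odd scalars $u_i$, and one invokes a black-box classical result), whereas your approach is more structural but, as you yourself flag, requires the careful sign bookkeeping in the restitution step.
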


The rest of this section is devoted to proving Theorem~\ref{pS1.3.1} through a step-by-step reduction to a form of the fundamental theorem of invariant theory in the $\Z_2$-graded setting.

\subsection{Fundamental theorem of invariant theory}
\la{SI.1}
 We begin by introducing some notation
({\it cf.} \cite{KP},  Section 4.3). Let $\sigma \in S_{p+q}$ have the decomposition
$\sigma=(i_1, \ldots ,i_k)(j_1,\ldots ,j_r) \ldots (l_1, \ldots ,l_s)$ into a product of disjoint cycles (including cycles of length
$1$). For matrices $M_1,\ldots ,M_{p+q}$ we define
$$\mathrm{Tr}_{\sigma}(M_1, \ldots ,M_{p+q}) \,:=\, \mathrm{Tr}(M_{i_1} \ldots M_{i_k})\mathrm{Tr}(M_{j_1} \ldots M_{j_r})\ldots \mathrm{Tr}(M_{l_1}\ldots M_{l_s}) \,\,\mathrm{.}$$
Fix $n \in \N$. Let $X_i:=(x^{(i)}_{jk})$ be even $(n \times n)$-matrix variables for $1 \leq i \leq p$ and $Y_i:=(y^{(i)}_{jk})$ be odd $(n \times n)$-matrix variables for $ 1 \leq i \leq q$. Let $k[x^{(i)}_{jk},\,y^{(l)}_{rs}]$ denote the free $\Z_2$-graded commutative
algebra generated by the even variables $\,\{x^{(i)}_{jk}\,, 1 \leq i \leq p\,,1 \leq j,k \leq n \}\,$ and the odd variables
$\,\{y^{(l)}_{rs}\,, 1 \leq l \leq q\,,1 \leq r,s \leq n\}\,$. Suppose that
$h(X_1,\ldots ,X_p,Y_1,\ldots ,Y_q) \,\in\, k[x^{(i)}_{jk},y^{(l)}_{rs}]$ is a polynomial function on the matrix variables
$X_1,\ldots ,X_p,Y_1, \ldots ,Y_q$ that is linear in each factor.

The next lemma is an extension of the fundamental theorem of invariant theory to the $\Z_2$-graded setting ({\it cf.} \cite[Theorem~4.3]{KP}).
\begin{lemma} \la{lS1.3.1}
$\,h(X_1,\ldots ,X_p,Y_1,\ldots ,Y_q)=h(gX_1g^{-1},\ldots ,gX_pg^{-1},gY_1g^{-1},\ldots ,gY_pg^{-1})\,$ for all $\,g \in \GL_n(k)\,$ if and only if
$h$ can expressed in the form
$$\sum_{\sigma \in S_{p+q}} c_{\sigma} \mathrm{Tr}_{\sigma}(X_1,\ldots ,X_p,Y_1,\ldots ,Y_q)$$
where $c_{\sigma} \in k$ for all $\sigma \in S_{p+q}\,$.
\end{lemma}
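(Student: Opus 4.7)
My plan is to reduce Lemma~\ref{lS1.3.1} to the classical (ungraded) First Fundamental Theorem of matrix invariant theory (\cite[Theorem~4.3]{KP}) by means of an identification of multilinear subspaces. The ``if'' direction is immediate: each $\Tr_\sigma$ is a product of traces of words in the matrices, and conjugation by $g \in \GL_n(k)$ does not introduce signs because $g$ and $g^{-1}$ have entries in $k$ (hence even), so the usual conjugation-invariance of the ordinary matrix trace goes through verbatim.

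For the ``only if'' direction, the first step would be to fix, once and for all, a linear ordering of the scalar generators $\{x^{(i)}_{jk}\} \cup \{y^{(l)}_{rs}\}$, placing all $x$-generators before all $y$-generators. With this ordering in place, the subspace of polynomials multilinear in each of the $p+q$ matrix variables is, both in the ordinary commutative polynomial algebra $k[x^{(i)}_{jk},y^{(l)}_{rs}]$ and in its $\Z_2$-graded commutative counterpart (with the $y$'s declared odd), spanned by the same combinatorial basis: monomials in which each of the $p+q$ matrices contributes exactly one entry, listed in the chosen order. This identification of bases is $\GL_n(k)$-equivariant, since simultaneous conjugation acts on matrix entries through the same linear transformations in both algebras.

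Next, I would invoke the classical theorem in the ungraded setting: if $\tilde h$ denotes the lift of $h$ to the ordinary polynomial algebra under the above identification, then $\tilde h$ is $\GL_n$-invariant and multilinear in $p+q$ matrix variables, so by \cite[Theorem~4.3]{KP} it can be expanded as $\sum_\sigma c_\sigma\,\Tr_\sigma(X_1,\ldots,X_p,Y_1,\ldots,Y_q)$ in the ordinary algebra. One then checks that $\Tr_\sigma$ so computed corresponds to $\Tr_\sigma$ computed in the $\Z_2$-graded setting: each factor $\Tr(M_{i_1}\cdots M_{i_k}) = \sum_{a_1,\ldots,a_k} (M_{i_1})_{a_1 a_2}\cdots (M_{i_k})_{a_k a_1}$ already lists the generators in the order dictated by the cycle, and the factors corresponding to distinct cycles are multiplied in a fixed linear order, so no reordering of generators takes place and no Koszul sign is introduced.

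The main obstacle is a bookkeeping issue specific to the $\Z_2$-graded world: the ordinary matrix trace fails to be cyclic on matrices with odd entries, and scalar trace factors of odd total degree fail to commute, so in the graded algebra $\Tr_\sigma$ genuinely depends on a choice of starting element within each cycle and on a choice of ordering of the disjoint cycles. These choices must be fixed consistently with those used on the ungraded side of the identification. However, alternative choices only affect $\Tr_\sigma$ by an overall sign, so the statement about the $k$-linear span is unaffected.
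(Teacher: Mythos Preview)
Your reduction to \cite[Theorem~4.3]{KP} via a $\GL_n$-equivariant identification of the two multilinear subspaces is sound, but one step needs adjustment: the assertion that ``no Koszul sign is introduced'' when transporting $\Tr_\sigma$ across the identification is not correct as stated. Your chosen basis orders the generators with all $x$'s before all $y$'s, whereas the expansion of $\Tr_\sigma$ lists them in the order dictated by the cycle decomposition of $\sigma$; reordering from one to the other does introduce a sign. What saves the argument---and what your final paragraph essentially says---is that this sign depends only on $\sigma$ and on the chosen cycle representatives, not on the running summation indices $a_1,a_2,\ldots$, so under your identification the graded $\Tr_\sigma$ corresponds to $\pm$ the ordinary $\Tr_\sigma$ and the $k$-linear spans agree.

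The paper takes a different route. Rather than identifying multilinear subspaces, it introduces auxiliary odd scalars $u_1,\ldots,u_q$, substitutes $Y_i=u_iZ_i$ with $Z_i$ even, and observes that multilinearity forces $h=u_1\cdots u_q\,G(X_1,\ldots,X_p,Z_1,\ldots,Z_q)$ for a purely even multilinear invariant $G$, to which the classical theorem applies verbatim. One then specializes $u_i$ to a single odd entry $y^{(i)}_{j_ik_i}$ and $Z_i$ to the elementary matrix $e_{j_ik_i}$, and concludes by multilinearity. The substitution trick has the virtue that all Koszul signs are absorbed into the prefactor $u_1\cdots u_q$; your approach is more structural (it makes transparent that the two multilinear subspaces are the same $\GL_n$-module) but requires the constant-sign observation above to close the loop.
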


\begin{proof}
Let $u_1,\ldots ,u_q$ be (formal) odd variables. Let $Z_1,\ldots ,Z_q$ be even matrix variables. Set $Y_i:=u_iZ_i$ for $1 \leq i \leq q$.
Then, $h(X_1,\ldots ,X_p,Y_1,\ldots ,Y_q)= u_1 \ldots u_q \,G(X_1,\ldots ,X_p,Z_1,\ldots ,Z_q)$. Clearly, $ h $ is invariant under
simultaneous conjugation by elements of $\GL_n$ iff $G$ is invariant under simultaneous
conjugation by elements of $\GL_n$. By Theorem 4.3 of~\cite{KP}, which is a form of the fundamental theorem
of invariant theory, $G$ is invariant under simultaneous
conjugation by elements of $\GL_n$ if and only if
$$ G(X_1,\ldots ,X_p,Z_1, \ldots ,Z_q) = \sum_{\sigma \in S_{p+q}} c'_{\sigma} \mathrm{Tr}_{\sigma}(X_1,\ldots ,X_p,Z_1,\ldots ,Z_q)$$ for
some constants $c_\sigma \in k$.
Hence, $h$ is invariant under simultaneous
conjugation by elements of $\GL_n$ iff
 $$h(X_1,\ldots ,X_p,Y_1,\ldots ,Y_q) = \sum_{\sigma \in S_{p+q}} \pm c'_{\sigma} \mathrm{Tr}_{\sigma}(X_1,\ldots ,X_p,Y_1,\ldots ,Y_q) $$
for $Y_i=u_iZ_i$. Putting $u_i=y^{(i)}_{j_ik_i}$ and $Z_i=e_{j_ik_i}$, we see that
$$ h(X_1,\ldots ,X_p,Y_1,\ldots ,Y_q) = \sum_{\sigma \in S_{p+q}} \pm c'_{\sigma} \mathrm{Tr}_{\sigma}(X_1,\ldots ,X_p,Y_1,\ldots ,Y_q) $$
 for $Y_i=e_{j_ik_i}y^{(i)}_{j_ik_i}$. By multilinearity in each factor,
$$ h(X_1,\ldots ,X_p,Y_1,\ldots ,Y_q) = \sum_{\sigma \in S_{p+q}} \pm c'_{\sigma} \mathrm{Tr}_{\sigma}(X_1,\ldots ,X_p,Y_1,\ldots ,Y_q) $$
for $Y_i=(y^{(i)}_{jk})$ for $1 \leq i \leq q$. This proves the required lemma.
\end{proof}

\subsection{Proof of Theorem~\ref{pS1.3.1}} \la{nS3.2}
We prove Theorem~\ref{pS1.3.1} in three steps.

\subsubsection{Reduction to free graded algebras} \la{1.3.1} Since $\mathrm{Tr}_n(A)_{\bullet}$ is a morphism in $\cDGA_k$, it suffices to forget the differentials
and prove our proposition in the category of graded commutative algebras. Recall that
a graded algebra $R$ is said to be free if as graded algebras, $R \cong TV$ for some graded vector space $V$.
Suppose that Theorem~\ref{pS1.3.1} is true for any free graded algebra $R$. Then, write $A$ as the quotient
of a free graded algebra $R$. One then has the following commutative diagram
$$\begin{diagram}
  \bSym(R_{\natural}) &\rOnto^{\mathrm{Tr}_n(R)} & R_n^{\GL}\\
    \dTo    &   &    \dOnto\\
  \bSym(A_{\natural}) &\rTo^{\mathrm{Tr}_n(A)} &  A_n^{\GL}
  \end{diagram}
$$
Indeed, the top arrow is degreewise surjective by assumption. 
On the other hand, since $A_n$ is the quotient of $R_n$ by the ideal generated by the elements of the form $ \|\pi(\alpha)\|_{ij} $ for $ \alpha \in \Ker(R \to A)$ (see Section~\ref{explpr}), $R_n \rar A_n$ is degreewise surjective. Since $\GL_n(k) $ is reductive, $R_n^{\GL} \rar A_n^{\GL}$ is degreewise surjective.
By commutativity of the above diagram, 
$\mathrm{Tr}_n(A)_{\bullet}$ is degreewise surjective.
Hence, it suffices to verify Theorem~\ref{pS1.3.1} for a free graded algebra.

\subsubsection{Reduction to finitely generated algebras} \la{1.3.2}

Let $R$ be a free graded algebra. To verify Theorem~\ref{pS1.3.1} for $R$, it suffices to verify Theorem~\ref{pS1.3.1}
for $R$ in the category of $\Z_2$-graded algebras. Suppose that $R \cong T_k V$ as graded algebras for some graded vector space $V$.
 Let $W_1:= \oplus_{n \in \Z} V_{2n+1}$ and let $ W_0:= \oplus_{n \in \Z} V_{2n}$. Then, $R \cong T_k W$ as $\Z_2$-graded
algebras. Hence, Theorem~\ref{pS1.3.1} for $T_k W$ implies Theorem~\ref{pS1.3.1} for $R$.

Further, since $T_k W$ can be written as a direct limit of its finitely generated subalgebras and since the functor $(\mbox{--})_n$ commutes with direct limits, Theorem~\ref{pS1.3.1} for each finitely generated subalgebra of $T_k W$ implies Theorem~\ref{pS1.3.1} for $T_k W$ itself
(and hence, by our arguments so far, Theorem~\ref{pS1.3.1} in general).

\subsubsection{Reduction to Lemma~\ref{lS1.3.1}}

We need to verify Theorem~\ref{pS1.3.1} for $R=T_k W$ where $W=W_0 \oplus W_1$ is finite dimensional over $k$.
We will verify Theorem~\ref{pS1.3.1} for $R=k\langle x,y \rangle$ where $x$ is
in degree $0$ and $y$ is in degree $1$. The argument for general $R$ is completely analogous. In this case, as
 graded commutative algebras, $R_n$ is the free commutative algebra generated by the variables $x_{ij}$ for
 $1 \leq i,j \leq n$ in degree $0$ and $y_{ij}$ for $1 \leq i,j\leq n$ in degree $1$.
Suppose that
$f \in R_n^{\GL}$. Then, by transforming $f$ via the substitutions
$x_{ij} \mapsto t.x_{ij}$, $y_{ij} \mapsto s.y_{ij}$, one sees that each summand of $f$ that is homogeneous
in the variables $\{x_{ij}\}$ and homogeneous in the variables $y_{ij}$ is in $R_n^{\GL}$.
It therefore suffices to verify
that for any $f \in R_n^{\GL}$ that is homogeneous of degree $p$ in the variables $x_{ij}$ and
homogeneous of degree $q$ in the (odd) variables $y_{ij}$ (for arbitrary nonegative integers $p,q$),  $f$ is in the image of $\bSym \mathrm{Tr}_n(R)_{\bullet} $.

Assume that $f \in R_n^{\GL}$ and that $f$  is homogeneous of degree $p$ in the variables $x_{ij}$ and
homogeneous of degree $q$ in the (odd) variables $y_{ij}$. View $f$ as a polynomial function in the even matrix
variable $X := (x_{ij})$
and the odd matrix variable $Y:=(y_{ij})$. Note that $f$ is of degree $p$ in $X$ and of degree $q$ in $Y$. Further, our assumption
 is that $f(gXg^{-1},gYg^{-1})=f(X,Y)$ for all $g \in \GL_n$.  Put $X:=\sum_{i=1}^p t_i X_i$
 and $Y:=\sum_{i=1}^q s_i Y_i$ where the $X_i$ are even matrix variables and the $Y_j$ are odd matrix variables.
Let $h(X_1,\ldots ,X_p,Y_1,\ldots ,Y_q)$ denote the summand of $f$ that is linear in the $t_i$'s and the $s_j$'s. By
construction,
$$ h(gX_1g^{-1},\ldots ,gX_pg^{-1},gY_1g^{-1},\ldots ,gY_qg^{-1})\, = \, h(X_1,\ldots ,X_p,Y_1,\ldots ,Y_q) $$
or any $g \in \GL_n$. Further, if $X_i=\frac{1}{p}\,X $ for all $1 \leq i \leq p$ and $Y_j=\frac{1}{q}\,Y $
for all $1 \leq j \leq q$, then $h(X_1,\ldots ,X_p,Y_1,\ldots ,Y_q)=\frac{p!q!}{p^pq^q}\,f(X,Y)$. Hence, Theorem~\ref{pS1.3.1} follows from Lemma~\ref{lS1.3.1}.

\section{Stabilization Theorem}
\la{S2}

\subsection{(Pro-)Invariant theory of $\GL_{\infty}\,$}
\la{S1.1}
We recall some basic facts about invariant theory of the inductive general linear group $\GL_{\infty}(k) $ acting on projective systems of DG $k$-algebras. A good reference for this material is \cite{T-TT}, where more details and proofs can be found. While \cite{T-TT} work with ordinary algebras, all the results we need can be easily adapted to the differential graded case.

\subsubsection{A topology on an inverse limit of $k$-algebras} \la{S1.1.1} Let $\mathbb I \subseteq \N $ be an infinite subset of the set of natural numbers, and let
$\{A_i; \,\mu_{ji}: A_j \to A_i\}_{i,j \in \mathbb{I}}$ be an inverse system of DG $k$-algebras indexed by $\mathbb I$. The inverse limit $ A_{\infty}\,:=\,\varprojlim_{i \in \mathbb I} A_i $ comes equipped with a natural subspace topology, which is induced from the product topology on $ \prod_{i \in \mathbb I} A_i $ arising from the discrete topology on each $A_i$. In this topology, the subspaces $ U_i:=\mu_i^{-1}(a_i)\,$, $\, i \in \mathbb I\,$, form a countable basis of open neighborhoods of each element $ (a_i)_{i \in \mathbb I} $ in $A_{\infty}$ (see \cite[Lemma~2.1]{T-TT}). The maps $\mu_i: A_{\infty} \rar A_i$ correspond to the natural projections $\prod_{i \in I} A_i \onto A_i $ and are clearly continuous for all $ i \in \mathbb I $.
We say that a family $\,\{f^{\alpha}\}_{\alpha \in \Lambda} $ of (homogeneous) elements
{\it topologically generates} $ A_{\infty} $ if the DG subalgebra generated by $\{f^{\alpha}\}$ is dense in $A_{\infty}$ with respect to the above topology.

\subsubsection{$\GL_{\infty}$-invariants}
\la{S1.1.2} For each pair of indices
$ i,j \in \mathbb{I} $ with $i \leq j$, denote by
$\,\lambda_{ij}:  \GL_i(k) \into \GL_j(k) \,$  the
natural inclusion, and let $ \GL_{\infty}(k) := \varinjlim_{i \in \mathbb I} \GL_i(k) $. Suppose that each DG algebra $A_i$ is equipped with a $\GL_i(k)$-action in such a way that
\begin{equation}
\la{condin}
g.\mu_{ji}(a_j)\,=\, \mu_{ji}(\lambda_{ij}(g).a_j),\,\,\,\,\forall \, g \in \GL_i(k)\,, \ i < j \,\mathrm{.}
\end{equation}
Then $\GL_{\infty}(k) $ acts naturally on $A_{\infty}$. Indeed, for $g \in \GL_i(k) \subset \GL_{\infty}(k) $ and $ (a_i) \in A_{\infty} $, we let
\begin{eqnarray*}
 (g.a)_j &=& \lambda_{ij}(g).a_j \,\,\,\forall \, i\leq j\\
         &=& \mu_{ji}(g.a_i) \,\,\,\forall \, j <i\ ,
\end{eqnarray*}
and the assignment $ g.(a_i) := ((g.a)_j) $ is easily seen to give a well-defined action map $\, \GL_{\infty} \times  A_\infty \to A_\infty \,$. We write
$ A^{\GL_{\!\infty}}_{\infty} $ for the corresponding invariant subalgebra of $ A_\infty $.

Now, if \eqref{condin} holds, the morphisms $ \mu_{ji}:\, A_j \to A_i $ restrict to the invariant subalgebras so that we may form the inverse system $\{A_i^{\GL_i}\}_{i\in \mathbb{I}}\,$. In this case, Theorem~3.4 of \cite{T-TT} asserts that
\begin{equation}
\la{eS1.2}
 A_{\infty}^{\GL_{\infty}} \,\cong\, \varprojlim_{i \in \mathbb I} A_i^{\GL_i} \mathrm{.}
\end{equation}
The isomorphism \eqref{eS1.2} allows one to equip $ A^{\GL_\infty}_{{\infty}} $ with a natural topology: namely, we put first the discrete topology on each $ A_i^{\GL_i} $ and equip
$\,\prod_{i \in \mathbb{I}} A_i^{\GL_i} \,$ with the product topology; then, identifying
$ A^{\GL_\infty}_{{\infty}} $ with a subspace in
$\,\prod_{i \in \mathbb{I}} A_i^{\GL_i} \,$ via \eqref{eS1.2}, we put on $ A^{\GL_\infty}_{{\infty}} $ the induced topology. The
corresponding topological DG algebra will be denoted $ A^{\GL}_{{\infty}} $.

\subsubsection{Example}
\la{S1.1.3}
Consider the polynomial algebra $ A_n := k[x_{ij};\, 1 \leq i,j \leq n] $ on which $\GL_n(k) $ acts via the coadjoint representation. Put $ X := (x_{ij})\, \in \, \M_n( A_n) $ and define the algebra map
$$k[x] \rar\M_n(A_n), \,\,\,\,\,\,\, x \mapsto X \,\mathrm{.}$$ Combining this with the matrix trace, we get a linear map $ \mathrm{Tr}_n: k[x] \rar A_n $ whose image is clearly in $ A_n^{\GL_n} $. Now, it is easy to check that
$$\mu_{n,n-1}\,:\, A_n \rar A_{n-1}, \,\,\,\,\, x_{ij} \mapsto (1-\delta_{in})(1-\delta_{jn})x_{ij} \,
$$
defines an algebra map for each $ n \ge 1 $ (here $ \delta $ denotes the Kronecker delta). With these maps, the algebras $ \{A_n\}_{n \in \N} $ form an inverse system with $\GL_n$-action satisfying the compatibility conditions \eqref{condin}. Hence, \eqref{eS1.2} applies in this situation. Furthermore, one easily verifies that
$$
\mu_{n,n-1} \circ \Tr_n\,=\,\Tr_{n-1}\ .
$$
Hence, the traces $\mathrm{Tr}_n $ assemble together to give a linear map $\mathrm{Tr}_{\infty}\,:\, k[x] \rar A_{\infty}^{\GL}$. By the Procesi Theorem, the subalgebra of $A_n^{\GL_n}$ generated by the image of $\mathrm{Tr}_n $ coincides with $A_n^{\GL_n}$ for each $ n $. It follows from this and the discussion in Section~\ref{S1.1} that the subalgebra of $A_{\infty}^{\GL}$ generated by the image of $\mathrm{Tr}_{\infty}$ is {\it dense} in $ A_{\infty}^{\GL}$.
As explained in \cite[ Example 1.2]{T-TT}, this is the best one can
achieve in this situation.
Finally, as suggested by its notation, the algebra $ A_n $ arises from applying the functor $(\mbox{--}\,)_n$ to $ A = k[x] $ (see \cite[Sect.~2.4]{BKR}).

\subsection{Stabilization} \la{S2.1}
Fix $A \in\DGA_{k/k} $, and let $ \bar{A} $ be its augmentation ideal
viewed as a non-unital algebra in $ \DGA $.
For each $ n \ge 1 $, bordering a matrix in $ \M_n(k) $ by $0$'s on the right and on the bottom gives an embedding
$\,\M_n(k) \into \M_{n+1}(k) \,$ of non-unital algebras. As a result, for every commutative DG algebra $ B \in \cDGA_k $, we get a map of sets
\begin{equation}
\la{isoun0}
\Hom_{\DGA}(\bar{A},\, \M_n(B)) \to \Hom_{\DGA}(\bar{A},\,\M_{n+1}(B))
\end{equation}
defining a natural transformation of functors from $ \cDGA_k $ to $ \Sets $. Since $B$'s are unital in \eqref{isoun0} and $ A $ is augmented, the restriction maps
\begin{equation}
\la{isoun1}
\Hom_{\DGA_k}(A,\,\M_n(B)) \stackrel{\sim}{\to} \Hom_{\DGA}(\bar{A},\,\M_n(B)) \ ,\quad
\varphi \mapsto \varphi|_{\bar{A}}
\end{equation}
are isomorphisms for all $ n \in \N $. Combining \eqref{isoun0} and \eqref{isoun1}, we thus have a direct system of natural transformations of functors $\, \Rep_n(A) \to \Rep_{n+1}(A)\,$,
which, by Theorem~\ref{nS2.1t1}, yields an inverse system of algebra maps $\,\{\mu_{n+1, n}: A_{n+1} \to A_n \}_{n\in\N} \,$ in $ \cDGA_k $. It is easy to check that the maps $ \mu_{n+1, n} $ respect augmentations (and hence, are actually morphisms in $ \cDGA_{k/k} $)
for all $ n $. Taking limit, we define the (augmented)
commutative DG algebra
$$
A_{{\infty}} := \varprojlim_{n\,\in\,\N} A_n \ .
$$

Next, we recall (see Section~\ref{glinv}) that the group $ \GL_n(k) $ naturally acts on $ A_n $ for each $ n \in \N $. It is easy to check that these actions together with the maps $\,\mu_{n+1, n}: A_{n+1} \to A_n\,$ satisfy the compatibility conditions
\eqref{condin}. Thus, we can apply the results of Section~\ref{S1.1.2} (in particular, \eqref{eS1.2}) to define the (topological) DG algebra
$$
A_{{\infty}}^{\GL} := A_{\infty}^{\GL_{\infty}} \cong \varprojlim\,A_n^{\GL_n} \ .
$$
This, in turn, defines the functor
\begin{equation}
\la{eS2.1.4}
(\mbox{--})_{{\infty}}^{\GL}\,:\ \DGA_{k/k} \rar \cDGA_{k/k} ,\,\,\,\,\, A \mapsto A_{{\infty}}^{\GL} \mathrm{.}
\end{equation}

\subsubsection{Trace subalgebra}
\la{S2.1.1}
Recall the cyclic functor $\,\FT:\,\DGA_{k/k} \to \Com_{\,k} \,$
introduced in Section~\ref{trmaps}. As explained in that section,
for any augmented DG algebra $ A $, the natural trace maps $\Tr_n(A)_{\bullet}\,:\, A_{\natural} \to A_n^{\GL} $
define a morphism of functors $ \Tr_n:\,\FT \to (\,\mbox{--}\,)^\GL_n $ from $ \DGA_{k/k} $
to $ \Com_{\,k} $. Now, it is easy to check that the following diagram
commutes for all $ n \in \N \,$:
\[
\begin{diagram}[small, tight]
  &       &     \FT(A)     &        & \\
  &\ldTo^{\Tr_{n+1}(A)}  &           &  \rdTo^{\Tr_n(A)} &  \\
A_{n+1}^{\GL} &       & \rTo^{\mu_{n+1, n}}  &        &  A_n^{\GL}
\end{diagram}
\]
Hence, by the universal property of inverse limits, there is a morphism of complexes $\, \Tr_{\infty}(A)_\bullet :\, \FT(A) \to A^{\GL}_{{\infty}}\,$ that factors $ \Tr_{n}(A)_\bullet $ for each $ n \in \N $. We extend this morphism to a homomorphism of DG algebras
\begin{equation}
\la{trinf}
\bSym \Tr_{\infty}(A)_\bullet :\ \bL[\FT(A)] \to A^{\GL}_{{\infty}}\ .
\end{equation}
The following lemma follows immediately from Theorem~\ref{pS1.3.1}.
\blemma
\la{lS2.1.1}
The map \eqref{trinf} is topologically surjective, i.e. its image is dense in $ A^{\GL}_{{\infty}} $.
\elemma

We denote the image of \eqref{trinf} by $ A_\infty^{\Tr}  $ and call
it the {\it trace subalgebra} of $ A_{\infty} $. As explained in
Example~\ref{S1.1.3}, the inclusion $ A_\infty^{\Tr} \subset A^{\GL}_{{\infty}} $ is proper in general. The fact that the algebra maps \eqref{trinf} are natural in $ A $ implies that
$ A \mapsto A_{\infty}^{\mathrm{Tr}} $  defines a functor
$\, (\,\mbox{--}\,)^{\Tr}_{\infty}\,:\ \DGA_{k/k} \to \cDGA_k\,$,
and \eqref{trinf} then gives a morphism of functors
\begin{equation*}
\la{morfun}
\bSym \Tr_{\infty}(\,\mbox{--}\,)_\bullet :\ \bL[\FT(\,\mbox{--}\,)] \to (\,\mbox{--}\,)^{\Tr}_{\infty}\ .
\end{equation*}

\subsubsection{{\sl DG} Hopf algebra structure}
\la{S2.1.2} Recall that the matrix rings $ \{\M_n(k)\}_{n \in \N} $
with natural inclusions $ \M_n(k) \into \M_{n+1}(k) $
form an inductive system of non-unital algebras. We set $\, \M_{\infty}(k) := \varinjlim\,\M_n(k)\,$. By definition, $ \M_{\infty}(k) $ is a (non-unital) algebra of all finite matrices\footnote{$\, \M_{\infty}(k) $ can be identified with the subalgebra of $ \End(k^\N)$ consisting of endomorphisms with finite-dimensional image and finite-codimensional kernel.}. More generally, for $ B \in \cDGA_k $, we define
$\, \M_{\infty}(B) := B \otimes \M_\infty(k) \cong \varinjlim\,\M_n(B) \,$ and observe that, for any $ A \in \DGA_{k/k} $, there is an adjunction
\begin{equation}
\la{eS2.1.2}
\Hom_{\DGA}(\bar{A},\, \M_{\infty}(B)) \,=\, \mathcal{H}om_{ \cDGA_k}(A_{{\infty}},\, B) \ ,
\end{equation}
where `$\,\mathcal{H}om \,$' denotes the set of {\it continuous} maps from $A_{\infty} $ equipped with the inverse limit topology (see
Section~\ref{S1.1.1}) to $ B \in \cDGA_k $ equipped with the discrete topology.

Next, recall that there is a natural way to extend the direct sum
of matrices of fixed size $ \M_n(B) \times \M_m(B) \to \M_{n+m}(B) $ to infinite matrices in $\M_{\infty}(B)$. This operation (sometimes called `Whitney sum')
\begin{equation*}
\oplus:\ \M_{\infty}(B) \times \M_{\infty}(B) \to \M_{\infty}(B)
\end{equation*}
is defined by the following rule ({\it cf.} \cite[Sect.~10.2.12]{L}):
\begin{equation}\la{wsum}
\left(\begin{array}{cccc}
\star & \star &  \star & \cdots \\
\star & \star &  \star & \cdots \\
\star & \star &  \star & \cdots \\
\vdots     & \vdots     & \vdots      & \ddots
\end{array} \right) \quad \bigoplus \quad
\left(\begin{array}{cccc}
\ast & \ast &  \ast & \cdots \\
\ast & \ast &  \ast &  \cdots \\
\ast & \ast &  \ast & \cdots \\
\vdots   & \vdots    & \vdots     & \ddots
\end{array} \right)
\ = \
\left(\begin{array}{ccccccc}
\star & 0    &  \star & 0    &  \star &    0 & \cdots\\
0     & \ast &  0     & \ast &      0 & \ast &  \cdots \\
\star & 0    &  \star & 0    &  \star &    0 & \cdots\\
0     & \ast &  0     & \ast &      0 & \ast &  \cdots \\
\star & 0    &  \star & 0    &  \star &    0 & \cdots\\
0     & \ast &  0     & \ast &      0 & \ast &  \cdots \\
\vdots    & \vdots     & \vdots      & \vdots   & \vdots   & \vdots    & \ddots
\end{array} \right) \ .
\end{equation}
It is easy to see that $\,\oplus\,$ is a morphism of non-unital algebras. Hence, for any $ A \in \DGA_{k/k} $, it defines a
morphism of functors $\,\cDGA_k \to \mathtt{Sets}\,$:
$$
\Hom_{\DGA}(\bar{A},\, \M_{\infty}(\,\mbox{--}\,)) \times \Hom_{\DGA}(\bar{A}, \, \M_{\infty}(\,\mbox{--}\,)) \,\to\,  \Hom_{\DGA}(\bar{A},\,  \M_{\infty}(\,\mbox{--}\,))\ .
$$
Combined with \eqref{eS2.1.2}, this last morphism becomes
\begin{equation}
\la{natrr}
 \mathcal{H}om_{ \cDGA_k}(A_{\infty} \otimes A_{\infty}, \,\mbox{--}\,) \,\to \, \mathcal{H}om_{ \cDGA_k}(A_{{\infty}}, \,\mbox{--}\,)\ ,
\end{equation}
where $ A_{{\infty}} \otimes A_{{\infty}} $ is the coproduct in $ \cDGA_k $ equipped with the inductive topology. Now, by Yoneda's Lemma, \eqref{natrr} defines a (continuous) homomorphism of DG algebras
\begin{equation}
\la{copr}
\Delta\,:\,A_{{\infty}} \rar A_{{\infty}} \otimes A_{{\infty}}\ .
\end{equation}
 \begin{lemma}
  \la{lS2.1.2.2}
The map \eqref{copr} restricts to the trace subalgebra $\,A_{\infty}^{\mathrm{Tr}},$ making it a commutative cocommutative {\sl DG} Hopf algebra. Furthermore, $\bSym \mathrm{Tr}_{\infty}(A)_{\bullet} $ becomes a homomorphism of {\sl DG} Hopf algebras.
\end{lemma}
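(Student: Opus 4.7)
The plan is to reduce the entire statement to a single fact: every trace element $\Tr_\infty(a)$, $a \in \bar A$, is primitive under $\Delta$. Once this is established, both the restriction to $A_\infty^{\Tr}$ and the Hopf algebra axioms follow essentially formally.

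To compute $\Delta$ on traces, I would first unwind its definition via the adjunction \eqref{eS2.1.2} combined with Yoneda's lemma. Setting $B = A_\infty \otimes A_\infty$ and $\varphi_i = \iota_i : A_\infty \to A_\infty \otimes A_\infty$ ($i=1,2$) in \eqref{natrr}, the composition $\M_\infty(\Delta) \circ \pi_\infty : \bar A \to \M_\infty(A_\infty \otimes A_\infty)$ is, by the very definition of $\Delta$ as $\iota_1 \ast \iota_2$, the Whitney sum of the two shifted universal representations $\iota_i \circ \pi_\infty$. The crucial step is the effect of the matrix trace on this sum: inspecting \eqref{wsum}, the diagonal of $M_1 \oplus M_2$ consists of the diagonal entries of $M_1$ placed at odd positions and those of $M_2$ at even positions, so $\Tr(M_1 \oplus M_2) = \Tr(M_1) + \Tr(M_2)$. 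Applying this with $M_i = (\iota_i \circ \pi_\infty)(a)$ and using that $\iota_i$ commutes with the trace, one obtains
\begin{equation*}
\Delta(\Tr_\infty(a)) \,=\, \Tr\bigl((\iota_1\circ\pi_\infty)(a)\,\oplus\,(\iota_2\circ\pi_\infty)(a)\bigr) \,=\, \Tr_\infty(a)\otimes 1 + 1\otimes \Tr_\infty(a)\,,
\end{equation*}
confirming that every $\Tr_\infty(a)$ is primitive.

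Since $A_\infty^{\Tr}$ is the commutative DG subalgebra of $A_\infty$ generated by the $\Tr_\infty(a)$'s, and $\Delta$ is an algebra map, primitivity on the generators forces $\Delta(A_\infty^{\Tr}) \subset A_\infty^{\Tr} \otimes A_\infty^{\Tr}$. The counit is inherited from the augmentation on $A_\infty$; coassociativity and cocommutativity of $\Delta|_{A_\infty^{\Tr}}$ need only be verified on the generators, where they are trivial since primitives satisfy both; and an antipode is defined on generators by $S(\Tr_\infty(a)) = -\Tr_\infty(a)$ and extended as an algebra anti-homomorphism (legitimate by graded commutativity). This yields the claimed commutative cocommutative DG Hopf algebra structure on $A_\infty^{\Tr}$. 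For the second assertion, one equips $\bL[\FT(A)]$ with its standard DG Hopf algebra structure in which every element of $\FT(A)$ is primitive; then $\bSym \Tr_\infty(A)_\bullet$ is an algebra map sending primitive generators to primitive traces, hence a morphism of DG Hopf algebras.

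The main obstacle is the first step: justifying cleanly that "applying $\M_\infty(\Delta)$ to the universal representation $\pi_\infty$ yields the Whitney sum" and that the matrix trace is compatible with this identification, so that the displayed equality genuinely computes $\Delta(\Tr_\infty(a))$. Both points amount to tracking the adjunction \eqref{eS2.1.2} through the canonical projections $A_\infty \to A_n$, reducing everything to the straightforward finite-level statements for $\pi_n$ and $\Tr_n$.
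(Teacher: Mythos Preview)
Your proposal is correct and follows essentially the same approach as the paper: the key step in both is the primitivity of traces under $\Delta$ (the paper's equation \eqref{lS2.1.2.1}), after which everything else is formal. The only presentational difference is that the paper transfers the Hopf structure from $\bSym[\FT(A)]$ via the surjective map $\bSym\Tr_\infty(A)_\bullet$ and the commutative square \eqref{diagco}, whereas you verify the coalgebra and antipode axioms directly on the primitive generators; these are equivalent ways of packaging the same argument.
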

 \begin{proof}
Recall that $A_{\infty}^{\mathrm{Tr}}$ is generated by the image of $\mathrm{Tr}_{\infty}(A)_{\bullet}$ in $A_{\infty} $, and it is easy to check that
\begin{equation}
 \la{lS2.1.2.1}
\Delta\left[\mathrm{Tr}_{\infty}(A)_{\bullet}(x)\right]\,=\, 1 \otimes \mathrm{Tr}_{\infty}(A)_{\bullet}(x)+\mathrm{Tr}_{\infty}(A)_{\bullet}(x) \otimes 1 \ ,\quad \forall\, x \in \FT(A)\ .
\end{equation}
Hence \eqref{copr} restricts to a morphism $\,\Delta: A_{\infty}^{\mathrm{Tr}} \rar A_{\infty}^{\mathrm{Tr}} \otimes A_{\infty}^{\mathrm{Tr}}\,$ in $\cDGA_k$.
Moreover, by \eqref{lS2.1.2.1}, there is a commutative diagram in $\cDGA_k\,$:
\begin{equation}\la{diagco}
\begin{diagram}[small]
\bSym[\mathcal{C}(A)]  & \rOnto^{\bSym \mathrm{Tr}_{\infty}(A)} & A_{\infty}^{\mathrm{Tr}}\\
 \dTo^{\Delta}& & \dTo_{\Delta}\\
 \bSym[\mathcal{C}(A)] \otimes \bSym[\mathcal{C}(A)] & \rOnto^{\ \bSym \mathrm{Tr}_{\infty}(A) \otimes \bSym \mathrm{Tr}_{\infty}(A)\ } & A_{\infty}^{\mathrm{Tr}} \otimes A_{\infty}^{\mathrm{Tr}}
\end{diagram}
\end{equation}
where $\Delta$ on the left is the unique coproduct on $\bSym[\mathcal{C}(A)] $ for which the primitive elements are precisely those of $\mathcal{C}(A)$. Lemma~\ref{lS2.1.2.2} now follows from the observation that $\,\Delta\,$ makes $\bSym[\mathcal{C}(A)] $ a commutative cocommutative DG Hopf algebra.
 \end{proof}
\subsubsection{Remark} The algebra $ \M_{\infty}(B) $ has the property that $\,\M_n(k) \otimes \M_{\infty}(B) \cong \M_{\infty}(B)\,$ for any $ n \ge 1 $ (see, e.g., \cite[Chap.~3, Sect.~4.2]{FF}). Choosing any such isomorphism for $ n = 2 $, we may define the direct sum
in $ \M_\infty(B) $ by the rule
$$
\oplus:\, \M_\infty(B) \times \M_{\infty}(B) \to \M_2(k) \otimes \M_{\infty}(B) \cong \M_{\infty}(B) \ , \quad
(a_1, a_2)\,\mapsto \,
\left(\!\begin{array}{cc}
a_1   & 0    \\
0     & a_2
\end{array} \!\right)\ ,
$$
which is different from \eqref{wsum}.
The corresponding map \eqref{copr} will also satisfy \eqref{lS2.1.2.1}
(and hence, the conditions of Lemma~\ref{lS2.1.2.2}), and \eqref{copr} will certainly depend on a particular choice of $ \oplus \,$. However, the proof of Lemma~\ref{lS2.1.2.2} shows that there is at most one coproduct $ \Delta $ on $ A^{\Tr}_\infty $ making \eqref{diagco} commutative. Hence, the restriction of \eqref{copr} to $ A^{\Tr}_\infty $ is independent of any choices. By Lemma~\ref{lS2.1.2.2},
$\, A^{\Tr}_\infty $ is thus equipped with a canonical Hopf algebra structure.

\subsection{Deriving the trace algebra}
\la{S2.2}
Recall the functor $\,
(\,\mbox{--}\,)^{\Tr}_{\infty}\,:\ \DGA_{k/k} \to \cDGA_k\,$ defined in Section~\ref{S2.1.1}.
\begin{theorem} \la{tS2.2}
$(a)$ The functor $(\mbox{--})_{\infty}^{\mathrm{Tr}}$ has a total left derived functor
$$\L  (\mbox{--})_{\infty}^{\mathrm{Tr}}\,:\, \Ho(\DGA_{k/k}) \rar \Ho(\cDGA_k), \,\,\,\,\,\,\, A \mapsto
(QA)_{\infty}^{\mathrm{Tr}} \mathrm{.} $$

$(b)$ The functor $(\mbox{--})_{{\infty}}^{\GL}$ has a total left derived functor
$$\L  (\mbox{--})_{{\infty}}^{\GL}\,:\, \Ho(\DGA_{k/k}) \rar \Ho(\cDGA_k), \,\,\,\,\,\,\, A \mapsto
(QA)_{\infty}^{\GL} \mathrm{.} $$

$(c)$ Part $(b)$ also holds with $(\mbox{--})_{{\infty}}^{\GL}$ replaced by $(\mbox{--}\,)_{{\infty}}$.
\end{theorem}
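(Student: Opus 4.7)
The plan is to invoke the standard criterion for deriving functors: to show that $F: \DGA_{k/k} \to \cDGA_k$ admits a total left derived functor, it suffices that $F$ preserve weak equivalences between cofibrant objects (for then $A \mapsto F(QA)$ descends to the homotopy category). By Lef\`evre's theorem, a cofibrant $A \in \DGA_{k/k}$ is a retract of a cobar construction $\boldsymbol{\Omega}(C)$, so Theorem~\ref{comp} furnishes an explicit presentation of $A_n$. I would treat $(c)$ first, then $(b)$, and finally $(a)$.

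For part $(c)$, the key observation is that, for cofibrant $A$, the stabilization map $\mu_{n+1,n}: A_{n+1} \to A_n$ is degreewise surjective: on the matrix generators $x^{\alpha}_{ij}$ it acts as the identity for $i, j \leq n$ and annihilates the generators involving the $(n+1)$st row or column; the general cofibrant case follows by the retract property. Since $k$ is a field and the systems $\{A_n\}$, $\{B_n\}$ are degreewise surjective, the Milnor short exact sequence is available. A weak equivalence $f: A \to B$ between cofibrants induces, via Theorem~\ref{nS2.1t2}$(b)$, an isomorphism of inverse systems $\{H_i(A_n)\} \cong \{H_i(B_n)\}$, and the five-lemma then gives that $f_\infty$ is a quasi-isomorphism. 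Part $(b)$ runs in parallel. The extra input is surjectivity of the restriction $\mu_{n+1,n}^{\GL}: A_{n+1}^{\GL_{n+1}} \to A_n^{\GL_n}$, which follows from our extended Procesi theorem (Theorem~\ref{pS1.3.1}) combined with the trace compatibility $\mu_{n+1,n} \circ \Tr_{n+1}(A)_\bullet = \Tr_n(A)_\bullet$: the latter allows one to lift a generating set of traces for $A_n^{\GL_n}$ one stage at a time. With Theorem~\ref{nS2.1t5} replacing Theorem~\ref{nS2.1t2}$(b)$, the Milnor argument closes case $(b)$.

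Part $(a)$ is the most delicate, since the image of a morphism of complexes is not homotopy-invariant in general. My plan is to realize $A_\infty^{\Tr}$ as a quotient: writing $K_n(A) := \ker(\bSym\Tr_n(A)_\bullet)$, we have $K_{n+1}(A) \subseteq K_n(A)$ by trace compatibility, and $A_\infty^{\Tr} \cong \bL[\FT(A)]/K(A)$ with $K(A) = \bigcap_n K_n(A)$. For augmented $A$, each $\bSym\Tr_n(A)_\bullet$ is surjective (any trace of a product in $A$ expands into traces of products in $\bar A$ plus constants, so Theorem~\ref{pS1.3.1} applied to $A$ implies surjectivity onto $A_n^{\GL_n}$), producing short exact sequences $0 \to K_n(A) \to \bL[\FT(A)] \to A_n^{\GL_n} \to 0$. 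A weak equivalence $f: A \to B$ between cofibrants makes both outer terms quasi-isomorphic (via Feigin--Tsygan on the left and part $(b)$ on the right), so the five-lemma yields that $K_n(A) \to K_n(B)$ is a quasi-isomorphism for every $n$. The main obstacle, which I expect to be the hardest step, is transferring these stagewise quasi-isomorphisms to the intersection $K(A) = \bigcap_n K_n(A)$ and thereby deducing that $f_\infty^{\Tr}: A_\infty^{\Tr} \to B_\infty^{\Tr}$ is a quasi-isomorphism; this likely requires exploiting the Hopf algebra structure of Lemma~\ref{lS2.1.2.2}, together with a Milnor--Moore-style reduction to primitives, in order to control the infinite intersection of the $K_n$ in a functorial way.
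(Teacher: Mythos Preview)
Your treatment of parts $(b)$ and $(c)$ is correct and matches the paper's argument closely: surjectivity of the transition maps (from Theorem~\ref{comp} for $(c)$, from Theorem~\ref{pS1.3.1} plus trace compatibility for $(b)$) gives the Mittag--Leffler condition on the towers, and the Milnor $\varprojlim{}^1$ exact sequence together with the five-lemma finishes the job.

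For part $(a)$, however, your strategy diverges from the paper's and runs into a real obstacle which you yourself flag but do not resolve. Writing $A_\infty^{\Tr} \cong \bL[\FT(A)]/K(A)$ with $K(A) = \bigcap_n K_n(A)$ is fine, and the stagewise quasi-isomorphisms $K_n(A) \to K_n(B)$ do follow by the five-lemma. The problem is the passage to the intersection: the tower $\{K_n(A)\}$ has \emph{injective} transition maps, so it does not satisfy Mittag--Leffler, and there is no reason for $H_i(\bigcap_n K_n(A))$ to be computable from the $H_i(K_n(A))$ via a Milnor sequence. Your suggestion to invoke the Hopf structure and Milnor--Moore does not supply the missing control: those tools decompose $\bL[\FT(A)]$ and $A_\infty^{\Tr}$ into primitives, but they say nothing about the homotopy invariance of a nested intersection of subcomplexes.

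The paper avoids this difficulty entirely by working with explicit homotopies rather than with kernels. Given an acyclic cofibration $i:A \to B$ between cofibrants, one picks a retraction $p:B \to A$ and, using Proposition~\ref{pA2.1}, a polynomial homotopy $h:B \to B \otimes \Omega$ between $\id_B$ and $ip$ that respects the augmentation. Applying $(\mbox{--})_n^{\GL}$ and $\FT$ to $h$ yields compatible homotopies $h_n^{\GL}$ and $\FT(h)$ fitting into the commuting squares with $\bSym\Tr_n$; these pass to the inverse limit and, by the very commutativity with the trace maps, restrict to a homotopy $h_\infty^{\Tr}: B_\infty^{\Tr} \to B_\infty^{\Tr} \otimes \Omega$ between $\id$ and $(ip)_\infty^{\Tr}$. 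This immediately gives that $i_\infty^{\Tr}$ is a quasi-isomorphism, with no appeal to $\varprojlim{}^1$ or to any structure on the kernels.
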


\begin{proof}
 By Brown's Lemma (see \cite[Lemma~9.9]{DS}), it suffices to prove that $(\mbox{--})_{\infty}^{\mathrm{Tr}}$ takes any acyclic cofibration between
cofibrant objects in $\DGA_{k/k}$ to a weak equivalence. Let $i:A \rar B$ be an acyclic cofibration in $\DGA_{k/k}$
with $A$ cofibrant. Then, there exists $p:B \rar A$ in $\DGA_{k/k}$ such that $pi=\id_A$ and $ip$ is homotopic to
$\id_B$. By Proposition~\ref{pA2.1}, there exists a morphism $h:B \rar B \otimes \Omega$ in $\DGA_{k/k}$ such that
$h(0)=\id_B$ and $h(1)=ip$ and $h(\bar{B}) \subset \bar{B} \otimes \Omega$ (where $\Omega$ denotes the de Rham algebra of the affine line $\mathbb A^1_k$ and $\bar{B}$ denotes the augmentation ideal of $B$).

By \cite[Lemma~2.5]{BKR}, one has a morphism $h_n^{\GL}\,:\,B_n^{\GL} \rar B_n^{\GL} \otimes \Omega$ for each $n \in \mathbb N$
such that $h_n^{\GL}(0)=\id_{B_n^{\GL}}$ and $h_n^{\GL}(1)= (ip)_n^{\GL}$. One also has a morphism
$\mathcal{C}(h)\,:\, \mathcal{C}(B) \rar \mathcal{C}(B) \otimes \Omega$ with $\mathcal{C}(h)(0)=\id$ and $\mathcal{C}(h)(1)
= \mathcal{C}(ip)$. The proof that $\bSym \mathrm{Tr}_n(\mbox{--})_{\bullet} $ is a natural transformation can be modified
without difficulty to show that the following diagram commutes for all $n$.
\begin{equation}
 \la{eS2.2}
\begin{diagram}[small]
   \bSym[\mathcal{C}(B)] &\rTo^{\bSym \mathrm{Tr}_n(B)} & B_n^{\GL} \\
     \dTo^{\bSym \mathcal{C}(h) }  & & \dTo_{h_n^{\GL}}\\
     \bSym[\mathcal{C}(B)] \otimes \Omega &\rTo^{\ \bSym \mathrm{Tr}_n(B) \otimes \,\id\ } & B_n^{\GL} \otimes \Omega
  \end{diagram}
\end{equation}
For a fixed $n$, denote the diagram~\eqref{eS2.2} by $\mathcal D_n$. Then, the map $\mu_{n+1,n}: B_{n+1}^{\GL}  \rar B_n^{\GL}$ induces a morphism $\mathcal D_{n+1} \rar
\mathcal D_n$ of commutative squares. Hence, the maps $h_n^{\GL}$ form a morphism of inverse systems which
yields a morphism $ h_{\infty}^{\GL}\,:\,B_{\infty}^{\GL} \rar \varprojlim_n (B_n^{\GL} \otimes \Omega)$. Using the diagram \eqref{eS2.2}, it is easy to check that $h_{\infty}^{\GL}$ takes elements of $B_{\infty}^{\Tr}$ to elements of $B_{\infty}^{\Tr} \otimes \Omega$. Thus we get a morphism $ h_{\infty}^{\Tr}\,:\,B_{\infty}^{\Tr} \rar B_{\infty}^{Tr} \otimes \Omega $ in $\cDGA_k$.
Clearly, $h_{\infty}^{\mathrm{Tr}}(0)= \id$ and $h_{\infty}^{\mathrm{Tr}}(1)=(ip)_{\infty}^{\mathrm{Tr}}$.
Hence, $(ip)_{\infty}^{\mathrm{Tr}}$ induces the identity on homology. Since $(pi)_{\infty}^{\mathrm{Tr}}$ does so as well,
$(i)_{\infty}^{\mathrm{Tr}}$ is a weak equivalence. This proves part $(a)$ of the desired theorem.

Part $(b)$ requires less work: let $i:A \rar B$ be an acyclic cofibration in $\DGA_{k/k}$ between cofibrant objects.
Since the forgetful functor $\DGA_{k/k} \rar \DGA_k$ preserves cofibrations and weak equivalences, the family of maps $ \{i_n^{\GL}\,:\,A_n^{\GL} \rar B_n^{\GL}\}_{n \in \N}$ defines a morphism of inverse systems, each term of
which is a quasi-isomorphism by the proof of
\cite[Theorem 2.6]{BKR}. Hence, $ \{\,\mathrm{H}_{\bullet}(i_n^{\GL})\,:\,
\mathrm{H}_{\bullet}(A_n^{\GL}) \rar \mathrm{H}_{\bullet}(B_n)^{\GL}\,\}_{n \in \N} $ is an isomorphism of inverse systems of (graded)
$k$-vector spaces. By Theorem~\ref{pS1.3.1} and the fact that $\mathrm{Tr}_n(A)_{\bullet}=\mu_{n+1,n} \circ \mathrm{Tr}_{n+1}(A)_{\bullet}$ for all $n \in \N$,
 the morphism $\mu_{n+1,n}:A_{n+1}^{\GL} \rar A_n^{\GL}$ is surjective
(and similarly for $B$). Hence, the inverse system $(A_n^{\GL})$ (resp., $(B_n^{\GL})$) is a tower of complexes satisfying
the Mittag-Leffler condition. By \cite[Theorem 3.5.8]{W}, we have a morphism of short exact sequences
$$
\begin{diagram}
0 & \rTo & \varprojlim^{\!1} \mathrm{H}_{p+1}(A_n^{\GL}) & \rTo & \mathrm{H}_p(A_{{\infty}}^{\GL}) & \rTo & \varprojlim \mathrm{H}_{p}(A_n^{\GL}) & \rTo & 0 \\
 & &        \dTo^{\varprojlim^{\!1} \mathrm{H}_{p+1}(i_n^{\GL})} & & \dTo^{\mathrm{H}_p(i_{{\infty}})} & & \dTo^{\varprojlim \mathrm{H}_{p}(i_n^{\GL})}  &  & \\
 0 & \rTo &     \varprojlim^{\!1} \mathrm{H}_{p+1}(B_n^{\GL}) & \rTo & \mathrm{H}_p(B_{{\infty}}^{\GL}) & \rTo & \varprojlim \mathrm{H}_{p}(B_n^{\GL})  & \rTo & 0 \\
\end{diagram}
$$
for each $ p \in \Z $. Since the rightmost and leftmost vertical maps are isomorphisms of $k$-vector spaces, so is the one in the middle. Thus $i_{{\infty}}$ is a weak equivalence. Brown's Lemma then completes the proof of $(b)$.

The proof of part $(c)$ is a trivial modification of that of $(b)$. Here, the fact that $\mu_{n+1,n}\,:\,A_{n+1} \rar A_n$ is surjective (for cofibrant $A$) follows from \cite[Theorem~2.8]{BKR}.
\end{proof}

\begin{definition}
For $A \in \Alg_{k/k}$, we define
$\,
\DRep_{\infty}(A)^{\mathrm{Tr}} := \L(A)_{\infty}^{\mathrm{Tr}}\,$
in $ \,\Ho(\cDGA_k)\,$ and will refer to
the homology $\H_{\bullet}(A, \infty)^{\mathrm{Tr}}\,:=\,
\text{H}_{\bullet}[\DRep_{\infty}(A)^{\mathrm{Tr}}] $ as the {\it stable representation homology} of $A$.
\end{definition}
\subsection{Main theorem}
\la{S2.3}
The following theorem is the main result of this paper.
\begin{theorem}
\la{tS2.3}
 $\bSym\mathrm{Tr}_{\infty}(\mbox{--})_{\bullet} $ induces an isomorphism of  functors
 $\, \bSym(\L  \mathcal{C}) \,\stackrel{\sim}{\to}\, \L (\,\mbox{--}\,)_{\infty}^{\mathrm{Tr}} \,$.
\end{theorem}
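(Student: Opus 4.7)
The plan is to reduce the stabilization theorem to a computation of primitive elements in a Hopf algebra, following the general pattern of Loday--Quillen--Tsygan but now in the inverse-limit direction for representation homology.

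Since both functors descend to the homotopy category by Theorem~\ref{tS2.2}, it suffices to fix a cofibrant $R \in \DGA_{k/k}$ and show that the canonical map $\bSym \Tr_\infty(R)_\bullet : \bSym[\FT(R)] \to R_\infty^{\Tr}$ is a quasi-isomorphism. This map is already a surjection of DG algebras by Lemma~\ref{lS2.1.1}, so the question is whether its kernel is acyclic. Both sides carry commutative cocommutative DG Hopf algebra structures (Lemma~\ref{lS2.1.2.2}) and the map is a morphism of DG Hopf algebras via diagram~\eqref{diagco}; moreover both are connected when graded by polynomial degree in $\FT(R)$. Invoking the DG version of the Milnor--Moore structure theorem, which is valid over a field of characteristic zero for connected commutative cocommutative Hopf algebras, the problem reduces to showing that the induced map on complexes of primitives $\Tr_\infty(R)_\bullet : \FT(R) \to \mathrm{Prim}(R_\infty^{\Tr})$ is a quasi-isomorphism.

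The left-hand side of this reduced map is automatic, since $\mathrm{Prim}[\bSym(\FT(R))] = \FT(R)$. To identify the right-hand side I would use the $\Z_2$-graded fundamental theorem of invariant theory (Lemma~\ref{lS1.3.1}): multilinear invariants in $R_n^{\GL}$ are spanned by trace monomials $\Tr_\sigma$ on generators of $R$, with all invariants obtained as polynomials in these monomials. A direct computation using the coproduct formula~\eqref{lS2.1.2.1} together with the Whitney-sum rule~\eqref{wsum} shows that a single trace monomial is primitive while any genuine product of two or more trace monomials acquires non-primitive mixed terms under $\Delta$. Consequently the primitives of the finite trace subalgebra $R_n^{\Tr} := \mathrm{im}[\bSym \Tr_n(R)]$ coincide, up to the kernel of $\Tr_n(R)_\bullet$, with the image of $\FT(R)$.

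The main obstacle is controlling the inverse limit. One must show that $\mathrm{Prim}$ commutes with $\varprojlim$ in our situation and that the kernels of the finite-stage trace maps collapse to zero as $n \to \infty$. For the first point, Procesi's Theorem~\ref{pS1.3.1} ensures surjectivity of each transition map in the tower $\{R_n^{\Tr}\}$, giving the Mittag--Leffler condition, so the $\varprojlim^1$-exact sequence used in the proof of Theorem~\ref{tS2.2} (via \cite[Thm.~3.5.8]{W}) identifies $\mathrm{Prim}(R_\infty^{\Tr})$ with $\varprojlim \mathrm{Prim}(R_n^{\Tr})$. For the second, the essential representation-theoretic input is that the Cayley--Hamilton-type obstructions to a multilinear invariant being expressible as a trace of a single word have bounded word-length in $n$, so any fixed element of $\FT(R)$ injects into $R_n^{\GL}$ once $n$ is sufficiently large; this is the genuine stability phenomenon underlying the theorem. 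Combining these steps yields the quasi-isomorphism $\FT(R) \xrightarrow{\sim} \mathrm{Prim}(R_\infty^{\Tr})$, which by Milnor--Moore upgrades to the desired quasi-isomorphism $\bSym[\FT(R)] \xrightarrow{\sim} R_\infty^{\Tr}$.
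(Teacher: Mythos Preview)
Your high-level strategy agrees with the paper's: reduce to free $R$, use the commutative cocommutative Hopf structure on $R_\infty^{\Tr}$, and apply Milnor--Moore to reduce the question to the map on primitives. There is one missed simplification and one genuine gap.

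The simplification: the paper does not aim for a quasi-isomorphism. After passing to free $R$ it \emph{forgets the differential} and proves that $\bSym \Tr_\infty(R)_\bullet$ is an isomorphism of graded commutative algebras (Proposition~\ref{pS4.3}). This is both stronger and easier, since one may then reduce further to a finitely generated $\Z_2$-graded free algebra, where word-length supplies the connected grading needed for Milnor--Moore. Surjectivity of $\Tr_\infty$ onto primitives is a one-line Adams-operation argument (the map $\psi^2 = \mu \circ \Delta$ acts by $2^q$ on the image of $\bSym^q$; Lemma~\ref{lS4.3.1}), which is cleaner than your case analysis of trace monomials under the coproduct.

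The gap is your inverse-limit step. There is no Hopf structure on $R_n^{\GL}$ for finite $n$: the Whitney sum \eqref{wsum} exists only on $\M_\infty$, so ``$\mathrm{Prim}(R_n^{\Tr})$'' is undefined and the tower you want to take $\varprojlim$ of does not exist. Even ignoring this, by Procesi $R_n^{\Tr} = R_n^{\GL}$, and $\varprojlim_n R_n^{\GL} = R_\infty^{\GL}$ properly contains $R_\infty^{\Tr}$ (Example~\ref{S1.1.3}); no Mittag--Leffler manoeuvre can isolate the trace subalgebra from the full invariant algebra. The paper avoids limits entirely at this point: it proves injectivity of $\Tr_\infty : \FT(R) \to R_\infty$ directly (Lemma~\ref{lS4.3.2}) by evaluating at explicit matrices $X \in \gl_\infty(k)$ and $Y \in \gl_\infty(\mathcal{A}_1)$ and invoking the super cyclic-derivative results developed in the Appendix (Corollary~\ref{cSI.1}). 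Your phrase ``Cayley--Hamilton-type obstructions have bounded word-length in $n$'' is the right intuition for why this injectivity holds, but turning it into a proof is precisely the content of that Appendix and does not follow from anything already available.
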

\begin{proof}
To begin with, note that $\,\bSym \mathrm{Tr}_{\infty}(\mbox{--})_{\bullet}\, $ is indeed a morphism of functors. This follows from the fact that $ \mathrm{Tr}_{\infty}(\mbox{--})_{\bullet} $ induces a morphism  of the derived functors $ \L\FT \to \L(\mbox{--})_{\infty}^{\mathrm{Tr}} $ from $ \Ho(\DGA_{k/k}) $ to $ \D(k) $ ({\it cf.} \cite[Remark~A.5.1]{BKR}). Since any $ A \in \DGA_{k/k} $ has a free resolution (see \cite[Remark 2.2.5]{H}), Theorem~\ref{tS2.3} follows from Proposition~\ref{pS4.3} below.
\end{proof}
\begin{prop}
\la{pS4.3} For any free $R \in \DGA_{k/k}$, the map $\, \bSym \mathrm{Tr}_{\infty}(R)_{\bullet}: \bSym[\mathcal{C}(R)] \stackrel{\sim}{\to} R_{\infty}^{\mathrm{Tr}} $ is an isomorphism.
\end{prop}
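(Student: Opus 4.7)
The map is tautologically surjective because $R_\infty^\Tr$ is defined in Section~\ref{S2.1.1} as the image of $\bSym\Tr_\infty(R)_\bullet$. The content of Proposition~\ref{pS4.3} is therefore \emph{injectivity}, and my plan is to reduce this to a statement about primitives via the Milnor--Moore structure theorem.

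By Lemma~\ref{lS2.1.2.2}, $R_\infty^\Tr$ is a commutative cocommutative DG Hopf algebra, and $\bSym\Tr_\infty(R)_\bullet$ is a surjective morphism of commutative cocommutative DG Hopf algebras, where $\bSym[\mathcal{C}(R)]$ carries its standard coproduct (in which $\mathcal{C}(R)$ is precisely the primitive subspace). Both sides are connected with respect to the natural grading by word length (trivially for $\bSym[\mathcal{C}(R)]$, and by surjectivity for $R_\infty^\Tr$), so the Milnor--Moore theorem in characteristic zero identifies each of them with the graded symmetric algebra on its primitive subspace. A surjective Hopf algebra morphism of this form is an isomorphism iff the induced map on primitives is an isomorphism; surjectivity on primitives is automatic, so it suffices to verify that $\Tr_\infty\,:\,\mathcal{C}(R) \to R_\infty^{\GL}$ is injective.

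For this, note that any nonzero $c \in \mathcal{C}(R)$ involves only finitely many generators of the free DG algebra $R = T_k V$, so I may restrict attention to the subalgebra $T_k V_0 \subseteq R$ generated by those and assume $V_0$ is finite-dimensional. Using the presentation of Theorem~\ref{comp}, $T(V_0)_n^{\GL_n}$ sits inside the polynomial algebra on the entries of generic matrices $\{X^\alpha\}$ indexed by a basis of $V_0$, and $\Tr_n(c)$ becomes an explicit linear combination of trace polynomials of words in the $X^\alpha$. Since $R_\infty^{\GL} \cong \varprojlim R_n^{\GL_n}$ embeds in $\prod_n R_n^{\GL_n}$ by \eqref{eS1.2}, the vanishing of $\Tr_\infty(c)$ is equivalent to $\Tr_n(c)=0$ for all $n\geq 1$.

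The main obstacle is the invariant-theoretic input that a nonzero linear combination of distinct cyclic words in the $X^\alpha$ yields a nonzero trace polynomial in generic $n\times n$ matrices once $n$ is sufficiently large. This is the Second Fundamental Theorem of matrix invariants of Procesi--Razmyslov, which describes the kernel of $\bSym\Tr_n$ as generated by multilinearized Cayley--Hamilton identities of total degree $\geq n+1$; in particular, the kernel restricted to any fixed-degree subspace stabilizes to $0$ as $n\to\infty$. One must upgrade this statement to the $\Z_2$-graded setting, which I expect to handle by the same ``odd auxiliary variables $u_i$'' device used in the proof of Lemma~\ref{lS1.3.1}: formally write each odd matrix variable as $u_iZ_i$ with $u_i$ an odd scalar and $Z_i$ even, apply the ungraded Procesi--Razmyslov theorem to the resulting polynomial in the $X_i$ and $Z_j$, and extract the multilinear component in the $u_i$. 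With this $\Z_2$-graded extension granted, the injectivity of $\Tr_\infty$ on $\mathcal{C}(R)$ follows, which together with the Milnor--Moore reduction completes the proof.
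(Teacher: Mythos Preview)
Your proposal is correct and follows the same overall architecture as the paper: reduce to the injectivity of $\Tr_\infty$ on $\mathcal{C}(R)$ via the Milnor--Moore structure theorem for commutative cocommutative Hopf algebras. The two proofs differ only in how they handle the two supporting lemmas. For surjectivity of $\Tr_\infty$ onto $\mathtt{Prim}(R_\infty^{\Tr})$, the paper argues directly with the second Adams operation (Lemma~\ref{lS4.3.1}), whereas you obtain it for free by introducing the word-length grading on $R_\infty^{\Tr}$ to make it connected; your route is a bit cleaner, though it tacitly uses that the coproduct from Lemma~\ref{lS2.1.2.2} respects this extra grading (which is immediate from diagram~\eqref{diagco}). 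For injectivity of $\Tr_\infty$ on $\mathcal{C}(R)$, the paper develops graded cyclic derivatives in the Appendix and proves the relevant linear-independence statement from scratch (Lemma~\ref{lS4.3.2} via Corollary~\ref{cSI.1}), while you invoke the Procesi--Razmyslov Second Fundamental Theorem together with the polarization device of Lemma~\ref{lS1.3.1}. Both routes are valid; the paper's is more self-contained, and if you carry out your polarization in the $\Z_2$-graded setting you should take care with cyclic words $w$ for which $\mathrm{N}(w)=0$ (cf.\ Lemma~\ref{lSI.0.1}), since these already lie in $[\bar R,\bar R]$ and must not be counted among the basis of $\mathcal{C}(R)$.
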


The rest of this subsection is devoted to the proof of Proposition~\ref{pS4.3}.

\subsubsection{Preliminary lemma} Recall that, by Lemma~\ref{lS2.1.2.2},  $ R_{\infty}^{\Tr} $ has a natural Hopf algebra structure with coproduct satisfying \eqref{lS2.1.2.1}. We write $ \mathtt{Prim}\,(R_{\infty}^{\mathrm{Tr}}) $ for the subspace of primitive elements of $ R_{\infty}^{\Tr} $.
\begin{lemma} \la{lS4.3.1} $\mathrm{Tr}_{\infty}(R)_{\bullet}\,:\,\mathcal{C}(R) \rar R_{\infty}^{\mathrm{Tr}}$ is a degreewise surjection onto $\mathtt{Prim}\,(R_{\infty}^{\mathrm{Tr}})$.

\end{lemma}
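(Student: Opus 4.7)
My plan is to reduce the statement to the Milnor--Moore structure theorem for connected graded commutative cocommutative Hopf algebras in characteristic zero, which forces the whole Hopf algebra $R_{\infty}^{\mathrm{Tr}}$ to be freely generated by its primitives, and then to check that the surjection $\bSym \mathrm{Tr}_{\infty}(R)_\bullet$ must restrict to a surjection on primitives.

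First, I would verify that the image of $\mathrm{Tr}_\infty(R)_\bullet$ lies in $\mathtt{Prim}(R_\infty^{\mathrm{Tr}})$: this is already recorded in the identity \eqref{lS2.1.2.1} of Lemma~\ref{lS2.1.2.2}, so the map does factor through the primitive subspace. Hence it remains only to establish surjectivity onto primitives.

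Next, to apply Milnor--Moore I would endow $R_\infty^{\mathrm{Tr}}$ with an auxiliary connected grading. Since $R$ is free in $\DGA_{k/k}$, we may write $R = T_k(V)$ for a graded vector space $V$, and equip $R$ with the weight grading for which $V$ sits in weight $1$. This weight grading is preserved by the augmentation and by all the operations that build $R_\infty^{\mathrm{Tr}}$ from $R$: the functor $(\mbox{--}\,)_n$ (which inherits the grading through the presentation of Section~\ref{explpr}), the $\GL_n$-action, the inverse-limit topology, and the trace map $\mathrm{Tr}_\infty$ itself. In particular $\mathcal{C}(R) = \bar R_\natural$ is weight-graded with $\mathcal{C}(R)_0 = 0$, and $R_\infty^{\mathrm{Tr}}$ becomes weight-graded connected with $(R_\infty^{\mathrm{Tr}})_0 = k$. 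The coproduct $\Delta$ constructed in \eqref{copr} respects this weight grading, since the Whitney sum \eqref{wsum} is homogeneous with respect to it. Thus $R_\infty^{\mathrm{Tr}}$ is a connected graded commutative cocommutative Hopf algebra over a field of characteristic zero, and the Milnor--Moore / Cartier--Kostant structure theorem gives a canonical isomorphism of Hopf algebras
\[
R_\infty^{\mathrm{Tr}} \;\cong\; \bSym\bigl[\mathtt{Prim}(R_\infty^{\mathrm{Tr}})\bigr].
\]

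With this in hand, the diagram \eqref{diagco} presents $\bSym \mathrm{Tr}_\infty(R)_\bullet$ as a surjective morphism of Hopf algebras between two free commutative Hopf algebras $\bSym[\mathcal{C}(R)]$ and $\bSym[\mathtt{Prim}(R_\infty^{\mathrm{Tr}})]$. Because any Hopf algebra morphism between free commutative cocommutative Hopf algebras on primitive generators is determined by its restriction to primitives, this morphism is $\bSym$ applied to the linear map $\mathrm{Tr}_\infty(R)_\bullet : \mathcal{C}(R) \to \mathtt{Prim}(R_\infty^{\mathrm{Tr}})$. Finally, an elementary weight-degree argument shows that if $\bSym(f)$ is surjective then so is $f$: every primitive $w$ is of positive weight, so writing $w$ as the image of a polynomial in $\bSym[\mathcal{C}(R)]$ and isolating the linear part in the symmetric grading exhibits $w$ as an element of $\mathrm{Tr}_\infty(R)_\bullet(\mathcal{C}(R))$. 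This completes the surjectivity onto $\mathtt{Prim}(R_\infty^{\mathrm{Tr}})$.

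The step I expect to be most delicate is not Milnor--Moore itself but checking that everything stays within its hypotheses: specifically, that the weight grading on $R$ descends cleanly through the centralizer construction $\sqrt[n]{\,\mbox{--}\,}$, the abelianization, the $\GL_n$-invariants, and the topological inverse limit defining $R_\infty^{\mathrm{Tr}}$, so that $R_\infty^{\mathrm{Tr}}$ is genuinely connected in the relevant sense and the coproduct \eqref{copr} respects the grading. Once this bookkeeping is done, the rest of the argument is essentially formal.
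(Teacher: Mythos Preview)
Your argument is correct, but it takes a genuinely different route from the paper's proof. The paper bypasses Milnor--Moore entirely and instead uses the second Adams operation $\psi^2 := \mu \circ \Delta$ on the Hopf algebra $R_\infty^{\mathrm{Tr}}$: writing any element $f$ as a finite sum $f = \sum_q f_q$ with $f_q$ in the image of $\bSym^q\,\mathrm{Tr}_\infty(R)_\bullet$, one checks directly that $\psi^2(f_q) = 2^q f_q$, so the $f_q$ lie in distinct eigenspaces; if $f$ is primitive then $\psi^2(f)=2f$, forcing $f_q=0$ for $q\neq 1$ and hence $f \in \mathrm{Tr}_\infty(R)_\bullet(\mathcal{C}(R))$. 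This is shorter and needs no auxiliary weight grading or structure theorem---the eigenspace decomposition of a single operator does all the work. Your approach, by contrast, front-loads the structural input (connectedness via the tensor-weight grading, then Milnor--Moore) and deduces surjectivity on primitives from surjectivity on indecomposables; this is perfectly valid and is in fact the same circle of ideas, since the Adams eigenspace decomposition is what underlies Milnor--Moore in characteristic zero. The trade-off is that you must carry the weight grading through the constructions of Sections~\ref{explpr} and~\ref{S2.1.2}, which (as you note) is straightforward but not cost-free bookkeeping, whereas the paper's argument is self-contained once the Hopf structure of Lemma~\ref{lS2.1.2.2} is in hand.
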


\begin{proof}
 By \eqref{lS2.1.2.1}, the image of $\mathrm{Tr}_{\infty}(R)_{\bullet} $ is contained in $\mathtt{Prim}\,(R_{\infty}^{\mathrm{Tr}})$. We need to show that every primitive element of $R_{\infty}^{\mathrm{Tr}}$ is in the image of $\mathrm{Tr}_{\infty}(R)_{\bullet} $. For $ q \in \Z $, let $R_{\infty}^{\mathrm{Tr}, q}$ denote the image of the map
$$\bSym^{\!q} \mathrm{Tr}_{\infty}(R)_{\bullet} :\bSym^{\!q}[\mathcal{C}(R)] \rar R_{\infty}^{\mathrm{Tr}}\,.$$
Then $R_{\infty}^{\mathrm{Tr},q}$ is contained in the eigenspace corresponding to the eigenvalue $\,2^q\,$ of the second Adams operation $\psi^2:=\mu \circ \Delta$ of the graded Hopf algebra $R_{\infty}^{\mathrm{Tr}}$. By definition, any $ f \in R_{\infty}^{\mathrm{Tr}}$ can be written as a linear combination $f=\sum_{q=0}^N f_q $ with $f_ q $ in $R_{\infty}^{\mathrm{Tr}, q}$. If $f$ is primitive, $\psi^2(f)=2f=\sum_{q=0}^N 2^q f_q $. Hence, $f_q =0$ for $q \neq 1$, which implies that $f$ lies in the image of $\mathrm{Tr}_{\infty}(R)_{\bullet}$. Hence, $$ \mathrm{Tr}_{\infty}(R)_{\bullet}\,:\,\mathcal{C}(R)  \rar \mathtt{Prim}\,(R_{\infty}^{\mathrm{Tr}})$$ is surjective.
\end{proof}

\subsubsection{Proof of Proposition~\ref{pS4.3}}

Since $\bSym \mathrm{Tr}_{\infty}(R)_{\bullet} $ is a morphism in $\cDGA_k$, it suffices to forget the differentials and verify that $\bSym \mathrm{Tr}_{\infty}(R)_{\bullet} $ is an isomorphism of graded (commutative) algebras. We may therefore forget the differential on $R$. Moreover, since $R$ is the direct limit of its finitely generated graded subalgebras, it suffices to prove Proposition~\ref{pS4.3} for $R$ finitely generated. Further, it is enough to check that $\bSym \mathrm{Tr}_{\infty}(R)_{\bullet} $ is an isomorphism of $\Z_2$-graded commutative algebras. We may therefore, assume without loss of generality that the generators of $R$ are in degrees $0$ and $1$ (see Section~\ref{1.3.2}). By Lemma~\ref{lS4.3.1} and the structure theorem for commutative, cocommutative DG Hopf algebras, the following lemma implies Proposition~\ref{pS4.3}.

\begin{lemma} \la{lS4.3.2}
If $R$ is a finitely generated free graded algebra with generators in degrees $0$ and $1$, then the map $\mathrm{Tr}_{\infty}(R)_{\bullet}\,:\,\mathcal{C}(R)  \rar R_{{\infty}}$
is injective.
\end{lemma}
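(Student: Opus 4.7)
\medskip

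\noindent
The plan is to reduce the injectivity of $\mathrm{Tr}_\infty(R)_\bullet$ to injectivity of $\mathrm{Tr}_n(R)_\bullet$ on finite-dimensional subspaces of $\mathcal{C}(R)$, for $n$ sufficiently large. Since $\mathrm{Tr}_\infty(R)_\bullet$ factors through each $\mathrm{Tr}_n(R)_\bullet$ via the canonical projection $R_\infty \onto R_n$, it suffices to show that for every finite-dimensional subspace $V \subset \mathcal{C}(R)$ there exists $n$ such that $\mathrm{Tr}_n(R)_\bullet|_V$ is injective. The trace map preserves the natural multi-grading of $R$ by the number of occurrences of each generator, so I may fix a multi-degree $d = (\alpha_1,\ldots,\alpha_p;\beta_1,\ldots,\beta_q)$ (with $\alpha_l$ copies of $x_l$ and $\beta_m$ copies of $y_m$) and work inside the finite-dimensional piece $\mathcal{C}(R)_d$ spanned by the super-cyclic words of this multi-degree. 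Set $N := \sum_l \alpha_l + \sum_m \beta_m$, so that $n\geq N$ is the stability range to be established.

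The key step is to reduce the $\mathbb Z_2$-graded case to the purely even case by a polarization argument in the spirit of Lemma~\ref{lS1.3.1}. Adjoin anticommuting formal scalars $\{u^{(m)}_j\,:\,1\le m \le q,\ 1\le j \le \beta_m\}$, and substitute each odd matrix variable by $Y_m \mapsto \sum_{j=1}^{\beta_m} u^{(m)}_j\, Z^{(m)}_j$, where the $Z^{(m)}_j$ are fresh \emph{even} $(n\times n)$-matrix variables. Extracting the coefficient of $\prod_{m,j} u^{(m)}_j$ (in a fixed order) from $\mathrm{Tr}(w)$ for each super-cyclic word $w \in \mathcal{C}(R)_d$ yields a linear map
\[
P\,:\,\mathcal{C}(R)_d \,\longrightarrow\, \tilde R_n \,,
\]
where $\tilde R := k\langle x_l,\, z^{(m)}_j\rangle$ is the free algebra on all even generators. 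Up to Koszul signs, $P$ is the classical polarization, sending $w$ to a signed sum (indexed by bijections between the $\beta_m$ occurrences of $y_m$ in $w$ and the labels $1,\ldots,\beta_m$) of ordinary cyclic words in $\tilde R$, followed by the usual matrix trace. Since $P$ factors through $\mathrm{Tr}_n(R)_\bullet|_{\mathcal{C}(R)_d}$ (via the inclusion $R_n \hookrightarrow \tilde R_n \otimes_k \Lambda_k(u^{(m)}_j)$), injectivity of $P$ implies injectivity of $\mathrm{Tr}_n(R)_\bullet|_{\mathcal{C}(R)_d}$.

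To finish, I invoke the classical linear independence of traces for generic matrices: for the all-even free algebra $\tilde R$, the trace map $\tilde R_\n \to \tilde R_n$ is injective on the subspace spanned by cyclic words of length $\le n$ (the kernel, by the Second Fundamental Theorem for $\GL_n$ in the form of Razmyslov-Procesi, is generated by Cayley-Hamilton trace identities in degree $n{+}1$; see \cite{P, KP}). Hence for $n \ge N$ the polarized multilinear cyclic words in $\tilde R_\n$ coming from distinct super-cyclic words in $\mathcal{C}(R)_d$ remain linearly independent, and $P$ is injective. Combining this with Steps~1 and~2 shows that $\mathrm{Tr}_n(R)_\bullet|_V$, and therefore $\mathrm{Tr}_\infty(R)_\bullet|_V$, is injective for every finite-dimensional $V\subset \mathcal{C}(R)$, which gives the claim. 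The main obstacle I anticipate lies in Step~2: tracking the Koszul signs carefully under the substitution and verifying that the polarization sends distinct super-cyclic classes in $\mathcal{C}(R)_d$ to linearly independent elements of $\tilde R_\n$. Once this combinatorial book-keeping is set up, the super case reduces cleanly to the classical linear independence of traces of distinct cyclic words in generic $n\times n$ matrices.
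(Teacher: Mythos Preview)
Your approach is essentially the one the paper takes: reduce via polarization (substituting $Y_m \mapsto \sum_j u^{(m)}_j Z^{(m)}_j$ with anticommuting scalars $u^{(m)}_j$) to linear independence of traces of ordinary cyclic words in generic even matrices. The paper phrases this as evaluating at pairs $(X,Y)\in \gl_\infty(k)\times\gl_\infty(\mathcal A_1)$ with $\mathcal A=k[u_1,u_2,\ldots]$ and invoking Corollary~\ref{cSI.1} from the Appendix, whose proof (via Lemmas~\ref{lS3.3.3} and~\ref{lSI.3}) carries out exactly the same polarization and then checks linear independence by an explicit choice of matrices rather than by citing the Second Fundamental Theorem.

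The gap you flag in Step~2 is real but is precisely the combinatorial content the paper isolates in the Appendix. The point is that ``good'' super-cyclic words (those with $\mathrm N(w)\neq 0$) form a basis of $\mathcal C(R)$ (Lemma~\ref{lSI.0.1}), and the sign book-keeping under polarization is organized there through super cyclic derivatives (Proposition~\ref{pSI.0.1}); with that in hand your reduction goes through. So your outline is correct, and the only substantive difference from the paper is that you invoke Razmyslov--Procesi for the even endgame, whereas the paper produces explicit generic matrices (left multiplication on a truncated free algebra) to witness independence. Either input suffices.
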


\begin{proof}
To avoid complicated notation we assume that $ R $ is generated by two elements, say $x$ and $y$, with $|x| =0 $ and $|y|=1 $. For a general  $R$, the argument will be completely similar. If $ R=k\langle x,y\rangle $,
then $\mathcal{C}(R)$ is spanned (as a $k$-vector space) by the set $S$ of cyclic words $w$ in the symbols $x$ and $y$ such that $\mathrm{N}(w) \,\neq \,0$ (see Section~\ref{SI.0}). Hence, any element of $\mathcal{C}(R)$ is of the form
$\sum_{w \in S} c(w)w$ where $c$ is some $k$-valued function on $S$ with finite support.

Let $\mathcal A$ denote the graded commutative algebra $ k[u_1,\ldots ,u_n,\ldots]\,$, where the $u_i$ are variables in homological degree $1$. Note that one has an evaluation homomorphism $R_{{\infty}} \rar \mathcal A$ for every choice
$(x_{ij}) \in \mathfrak{gl}_{\infty}(k)$ and $(y_{ij}) \in \mathfrak{gl}_{\infty}(\mathcal A_1)$. Now, if
$f \in \mathcal{C}(R)$ such that $\mathrm{Tr}_{\infty}(R)_{\bullet}(f)=0 \in R_{{\infty}}$, then
 $\,\mathrm{Tr}\,[f(X,Y)]=0\,$ for any $X \in \mathfrak{gl}_{\infty}(k)$ and
$Y \in \mathfrak{gl}_{\infty}(\mathcal A_1)$. But, if $f =\sum_{w \in S} c(w)w$, then
$$\sum_{w \in S} c(w)\,\mathrm{Tr}\,[w(X,Y)] =0$$ for all $X \in \mathfrak{gl}_{\infty}(k)$ and
$ Y \in \mathfrak{gl}_{\infty}(\mathcal A_1) $. Corollary~\ref{cSI.1} (see Appendix) then implies that $\,f=0\,$, proving the desired lemma.
\end{proof}

\subsubsection{Corollary}
\la{S2.3a}
Let $ A \in \Alg_{k/k} $ be an ordinary (augmented) $k$-algebra.
Then, by Theorem~\ref{ftt}$(b)$, there is an isomorphism of graded
vector spaces $\,\H_{\bullet}[\L\FT(A)] \cong \rHC_{\bullet}(A) \,$. In combination with this fact Theorem~\ref{tS2.3} implies
\begin{corollary}
\la{cS2.3.2}
The map \eqref{trinf} induces an isomorphism of graded commutative Hopf algebras
\begin{equation}
\la{anlqt}
\bSym \mathrm{Tr}_{\infty}(A)_{\bullet} :\,\bSym[\rHC_{\bullet}(A)] \,\stackrel{\sim}{\to} \,\mathrm{H}_{\bullet}(A,\infty)^{\mathrm{Tr}} \mathrm{.}
\end{equation}
\end{corollary}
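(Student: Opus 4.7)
The plan is to derive the corollary directly from the stabilization theorem (Theorem~\ref{tS2.3}) evaluated on an ordinary algebra $A$, combined with the Feigin-Tsygan identification (Theorem~\ref{ftt}$(b)$), and to track the Hopf algebra structure through the argument.

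First, I would pick a cofibrant resolution $QA \sonto A$ in $\DGA_{k/k}$. By Theorem~\ref{tS2.3} applied to $QA$, the morphism $\bSym\,\Tr_{\infty}(QA)_{\bullet}$ is a quasi-isomorphism in $\cDGA_k$:
$$
\bSym[\FT(QA)] \stackrel{\sim}{\to} (QA)_{\infty}^{\Tr} = \L(A)_{\infty}^{\Tr}\ .
$$
Passing to homology on the right immediately gives $\H_{\bullet}(A,\infty)^{\Tr}$ by definition. On the left, since $k$ has characteristic zero, the functor $\bSym$ on complexes of $k$-modules is exact, and hence commutes with homology: $\H_{\bullet}(\bSym[\FT(QA)]) \cong \bSym[\H_{\bullet}(\FT(QA))]$. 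Combining this with Theorem~\ref{ftt}$(b)$, which identifies $\H_{\bullet}[\L\FT(A)] \cong \rHC_{\bullet}(A)$, yields the isomorphism of graded commutative algebras
$$
\bSym[\rHC_{\bullet}(A)] \cong \H_{\bullet}(A, \infty)^{\Tr}\ ,
$$
and this isomorphism is induced by $\bSym\,\Tr_{\infty}(A)_{\bullet}$ as required.

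It remains to check that this isomorphism is compatible with the Hopf algebra structures. On the right, Lemma~\ref{lS2.1.2.2} equips $(QA)^{\Tr}_{\infty}$ with a natural DG Hopf algebra structure (coming from the Whitney sum~\eqref{wsum}); on the left, $\bSym[\FT(QA)]$ carries the standard shuffle-coproduct Hopf algebra structure for which the elements of $\FT(QA)$ are primitive. The commutative square~\eqref{diagco} from Lemma~\ref{lS2.1.2.2} precisely says that $\bSym\,\Tr_{\infty}(QA)_{\bullet}$ is a morphism of DG Hopf algebras, and since passage to homology and to $\bSym$ both preserve the Hopf algebra structures, the resulting isomorphism $\bSym[\rHC_{\bullet}(A)] \stackrel{\sim}{\to} \H_{\bullet}(A,\infty)^{\Tr}$ is a morphism of graded commutative Hopf algebras.

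The substantive work has already been done in Theorem~\ref{tS2.3}; the only delicate point here is the interchange of $\bSym$ with homology, which is where characteristic zero enters. Strictly speaking, there is also a minor point to be verified: the isomorphism depends only on $A$, not on the choice of cofibrant replacement $QA$, but this is an immediate consequence of the fact that $\L(\mbox{--})^{\Tr}_{\infty}$ and $\L\FT$ are well-defined derived functors (Theorems~\ref{ftt}$(a)$ and \ref{tS2.2}$(a)$) and the naturality of $\bSym\,\Tr_{\infty}(\mbox{--})_{\bullet}$ as a morphism of functors, so that the resulting map on homology is canonical.
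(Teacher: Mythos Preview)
Your proposal is correct and follows essentially the same approach as the paper: the corollary is derived from Theorem~\ref{tS2.3} together with the Feigin--Tsygan identification of Theorem~\ref{ftt}$(b)$, and the Hopf algebra compatibility comes from Lemma~\ref{lS2.1.2.2}. Your write-up simply makes explicit the steps (interchange of $\bSym$ with homology in characteristic zero, independence of the cofibrant resolution) that the paper leaves implicit in its one-line deduction.
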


\subsubsection{Remarks}
\la{S2.3.1}
1.\, Corollary~\ref{cS2.3.2} implies that the cyclic homology of an augmented algebra is determined by its stable representation homology. This answers a question of V.~Ginzburg how the cyclic homology of an associative algebra can be recovered from its representation varieties. A different answer to this question is proposed in \cite{GiS}.

2.\, Theorem~\ref{tS2.3} and its Corollary~\ref{cS2.3.2} hold, {\it mutatis mutandis}, if we replace $\, k \,$ by any finite-dimensional semi-simple $ k$-algebra $S$. If $ A = T_S(V) $ is a free (tensor) algebra over $S$, then the ($S$-relative) cyclic homology
$ \rHC_{n}(A) $ of $A$ vanishes for all $ n \ge 1 $ and so does its
representation homology (see \cite[Theorem 21]{BFR}). In this special case, our Theorem~\ref{tS2.3} boils down to \cite{Gi}, Corollary~4.3.
However, this last corollary (as well as Proposition~4.2 from which it follows) are stated in \cite{Gi} incorrectly: the trace map $ \mathtt{tr}_\infty $ appearing in the statements of these results is injective but not an isomorphism in general ({\it cf.} Example~\ref{S1.1.3} above).

3.\, Finally, the assumption that $A$ is augmented is essential for the result of Corollary~\ref{cS2.3.2}.
For example, if $A = A_1(k) $ is the first Weyl algebra over $k$, then
$ \H_\bullet(A, n) $ is trivial for all $ n \ge 0 $ (see \cite[Example~2.1]{BKR}), hence so is $ \H_{\bullet}(A,\infty)^{\mathrm{Tr}} $. On the other hand, the
cyclic homology of $ A_1(k) $ is nonzero (see, e.g., \cite[Exercise~E.3.1.4]{L}).

\section{Koszul Duality and the Obstruction Complex}
\la{KDOC}
In this section, we will show that the stable representation homology of an augmented algebra
$A$ is Koszul dual to the Lie algebra homology of $ \gl_\infty(A) $. By Koszul duality we will mean a duality between algebras and coalgebras, which manifests itself in an equivalence of approriately defined derived categories of representations. For a detailed exposition of the Koszul duality theory in this general framework we refer to Chapter~2 of the recent monograph
\cite{LV} (see also \cite{Po}). We will only briefly recall basic definitions.

\subsection{Acyclic twisting cochains}
\la{actw}
We will work with augmented counital DG coalgebras $ C $ which are {\it conilpotent}
in the sense that
\begin{equation}
\la{cocom}
\bar{C} = \bigcup_{n \ge 2}\, \Ker\,[\,C \xrightarrow{\Delta^{(n)}} C^{\otimes n}
\onto \bar{C}^{\otimes n}\,]
\end{equation}
where $ \Delta^{(n)} $ denotes the $n$-th iteration of the comultiplication map
$\, \Delta_C: C \to C \otimes C \,$ and
$\, \bar{C} \,$ is the cokernel of the augmentation map $ \varepsilon_C: k \to C \,$.
We denote the category of such coalgebras by $ \DGC_{k/k} $.
Given an algebra $ R \in \DGA_{k/k} $ and a coalgebra $ C \in \DGC_{k/k} $,
we define a {\it twisting cochain} $\, \tau: C \to R $ to be a linear map of
degree $-1$ satisfying
$$
d_R \,\tau + \tau\,d_C + m_R \,(\tau \otimes \tau)\,\Delta_C = 0\ ,\quad
\varepsilon_R \,\tau\, = \, \tau\,\varepsilon_C = 0\ ,
$$
where $ d_R $ and $ d_C $ are the differentials on $ R $ and $ C $ and $ m_R $ is the
multiplication map on $ R $.
We write $ \Tw(C,R) $ for the set of all twisting cochains from $C$ to $R$. It is easy to show
that, for a fixed algebra $ R $, the functor
$$
\Tw(\mbox{--}, R):\,\DGC_{k/k} \to \Sets\ ,\quad C \mapsto \Tw(C,R)\ ,
$$
is representable; the corresponding coalgebra $ \bB(R) \in \DGC_{k/k} $ is called the {\it bar construction} of $ R \,$: it is defined as the tensor coalgebra $ T_k(R[1]) $ with differential lifting $ d_R $ and $ m_R $. Dually, for a fixed coalgebra $ C $, the functor
$$
\Tw(C, \mbox{--}):\,\DGA_{k/k} \to \Sets\ ,\quad R \mapsto \Tw(C,R)\ ,
$$
is corepresentable; the corresponding algebra $ \bOmega(C) \in \DGA_{k/k} $ is called the
{\it cobar construction} of $ C \,$: it is defined as the tensor algebra $ T_k(C[-1]) $
with differential lifting $ d_C $ and $ \Delta_C $.
Thus, we have canonical isomorphisms
\begin{equation}
\la{fism}
\Hom_{\DGA_{k/k}}(\bOmega(C), R)\, = \,\Tw(C,R)\, =\, \Hom_{\DGC_{k/k}}(C, \bB(R))
\end{equation}
showing that $ \bOmega: \DGC_{k/k} \rightleftarrows \DGA_{k/k}: \bB $ are adjoint functors.
Recall that the category $ \DGA_{k/k} $ carries a natural model structure, where the weak equivalences are the quasi-isomorphisms. There is a dual model structure on $ \DGC_{k/k} $,
with weak equivalences being the morphisms $f$ such that $ \bOmega(f) $ is a quasi-isomorphism.
It is easy to check that $ \bOmega $ and $ \bB $ are Quillen functors relative to these model
structures, and in fact, they induce mutually inverse equivalences between the homotopy categories
$\,\Ho(\DGC_{k/k}) $ and $ \Ho(\DGA_{k/k})\,$ ({\it cf.} \cite[Theorem~4.3]{Ke}).

Now, let $ \Mod(R) $ denote the category of right DG modules over $ R $, and dually
let $ \cMod(C) $ denote the category of right DG comodules over $ C $ which
are conilpotent in a sense similar to \eqref{cocom}.
Given a twisting cochain $\, \tau \in \Tw(C,R) \,$  one can define a pair of functors between these categories
\begin{equation*}
\mbox{--} \otimes_{\tau} C :\, \Mod(R) \rightleftarrows \cMod(C)\,:
\mbox{--} \otimes_{\tau} R
\end{equation*}
called the {\it twisted tensor products}. Specifically, if $ M \in \Mod(R) $,
then $ M \otimes_{\tau} C $ is defined to be the DG $C$-comodule whose
underlying graded comodule is $ M \otimes_k C $ and whose differential is given by
$$
d = d_M \otimes \id + \id \otimes d_C +
(m \otimes \id)\,(\id \otimes \tau \otimes \id)\,(\id \otimes \Delta)\ .
$$
Similarly, for a DG comodule $ N \in \cMod(C) $, one defines a
DG $R$-module $ N \otimes_\tau R $.

Next, recall that the {\it derived category}
$ \D(R) $ of DG modules is obtained by localizing $ \Mod(R) $ at the class of all quasi-isomorphisms. To introduce the dual notion for DG comodules one has to replace
the quasi-isomorphisms by a more restricted class of morphisms in $ \cMod(C) $.
We call a morphism $f$ in $ \cMod(C) $ a weak equivalence if
$\,f \otimes_{\tau_C} \bOmega(C) \,$ is quasi-isomorphism in $ \Mod\,\bOmega(C)\,$, where $ \tau_C: C \to \bOmega(C) $ is the universal twisting cochain corresponding to the identity map under \eqref{fism}. The {\it coderived category} $ \D^c(C) $ of DG comodules is then defined by localizing $ \cMod(C) $ at the class of weak equivalences. It is easy to check that the twisted tensor products induce a pair of adjoint functors
\begin{equation}
\la{quieq1}
\mbox{--} \otimes_{\tau} C \, :\, \D(R) \rightleftarrows \D^c(C)\,: \,
\mbox{--} \otimes_{\tau} R\ .
\end{equation}
The following theorem characterizes the class of twisting cochains for which \eqref{quieq1}
are equivalences.
\bthm[see \cite{LV}, Theorem~2.3.2]
\la{Kosc}
For $\,\tau \in \Tw(C,R)\,$, the following are equivalent:
\begin{enumerate}
\item[(i)] the functors \eqref{quieq1} are mutually inverse equivalences of categories;
\item[(ii)] the complex $\,C \otimes_\tau R \,$ is acyclic;
\item[(iii)] the complex $\,R \otimes_\tau C \,$ is acyclic;
\item[(iv)] the natural morphism $ R \otimes_\tau C \otimes_\tau R \stackrel{\sim}{\to} R $ is a quasi-isomorphism;
\item[(v)] the morphism $ \bOmega(C) \stackrel{\sim}{\to} R $ corresponding to $\tau $ under \eqref{fism} is a quasi-isomorphism in $ \DGA_{k/k}$;
\item[(vi)] the morphism $ C \stackrel{\sim}{\to} \bB(R) $ corresponding to $\tau $ under
\eqref{fism} is a weak equivalence in $ \DGC_{k/k} $.
\end{enumerate}
If the conditions {\rm (i)} - {\rm (vi)} hold, the DG algebra $R$ is determined by $C$ up to isomorphism in
$ \Ho(\DGA_{k/k}) $ and the DG coalgebra $C$ is determined by $ R $ up to isomorphism
in $ \Ho(\DGC_{k/k}) $.
\ethm
A twisting cochain $ \tau \in \Tw(C,R) $ satisfying the conditions of
Theorem~\ref{Kosc} is called {\it acyclic}. In this case, the DG coalgebra $C$
is called {\it Koszul dual} to the DG algebra $ R $ and $ R $
is called {\it Koszul dual} to $ C $.

\subsection{Duality between Lie homology and representation homology}
Recall that if $ \g $ is a Lie algebra and $ \h \subseteq \g $ is a Lie subalgebra
of $ \g $, the relative Lie algebra homology $ \H_\bullet(\g, \h; k) $ is computed by
the standard Chevalley-Eilenberg complex $ \CE_\bullet(\g/\h) $. This complex
has a natural structure of a cocommutative DG coalgebra, with comultiplication
induced by the diagonal map  $ \g/\h \to \g/\h \oplus \g/\h $ ({\it cf.} \cite[10.1.3]{L}).

Now, let $ A \in \Alg_{k/k} $. For $ r \ge 1 $, consider the
matrix Lie algebra $\, \gl_r(A) = \M_r(A) \,$ and its canonical Lie subalgebra
$ \gl_r(k) \subseteq \gl_r(A) $. To simplify the notation
write $\, \rgl_r(A) := \gl_r(A)/\gl_r(k) \,$. Then, by \cite[Lemma~6.1]{LQ}
(see also \cite[10.2.3]{L}), there is a morphism of complexes
\begin{equation}
\la{liem1}
\vartheta_\bullet:\,  \CE_\bullet(\rgl_r(A)) \to \rCC_\bullet(A)[1]\ ,
\end{equation}
where $ \rCC_\bullet(A) $ is Connes' cyclic complex computing the reduced
cyclic homology of $A$. Specifically, $ \vartheta_\bullet $ is defined by
$$
\vartheta(\xi_0 \wedge \xi_1 \wedge \ldots \wedge \xi_k) = \sum_{\sigma \in \mathbb{S}_k} \,
\mbox{\rm sgn}(\sigma)\,\Tr (\xi_0 \otimes \xi_{\sigma(1)} \otimes
\ldots \otimes \xi_{\sigma(k)})\ ,
$$
where $ \Tr:\, \M_r(A)^{\otimes (k+1)} \to A^{\otimes (k+1)} $ is the
generalized matrix trace:
$$
\Tr (\alpha \otimes \beta \otimes \ldots \otimes \eta) :=
\sum\, \alpha_{i_0 i_1} \otimes \beta_{i_1 i_2} \otimes \ldots \otimes \eta_{i_{k+1} i_0}\ .
$$
Since $ \CE_\bullet(\rgl_r(A)) $ is a cocommutative DG coalgebra, the morphism
of complexes \eqref{liem1} extends to a (unique) map of DG coalgebras
\begin{equation}
\la{liem2}
\bSym^{\! c}(\vartheta)_\bullet:\,  \CE_\bullet(\rgl_r(A)) \to \bSym^{\! c} (\rCC_\bullet(A)[1])\ ,
\end{equation}
where $ \bSym^{\! c} (V) $ denotes the cofree cocommutative DG coalgebra cogenerated
by a complex $ V $. The map $ \bSym^{\! c}(\vartheta_\bullet) $ factors through the natural inclusion
$\,\CE_\bullet(\rgl_r(A)) \into \CE_\bullet(\rgl_\infty(A))\,$, and the Loday-Quillen-Tsygan
Theorem \cite{LQ,T} implies that the induced map
$$
\bSym^{\! c}\,(\vartheta_\infty)_\bullet:\,  \CE_\bullet(\rgl_\infty(A)) \to
\bSym^{\! c}(\rCC_\bullet(A)[1])
$$
is a quasi-isomorphism. Thus, we have an isomorphism of graded coalgebras
\begin{equation*}
\la{LQTs}
\H_\bullet(\gl_\infty(A), \gl_\infty(k); k) \stackrel{\sim}{\to} \bSym^{\! c}(\rHC_\bullet(A)[1])\ .
\end{equation*}

On the other hand, in \cite[Section~4.3.3]{BKR}, for a fixed cofibrant
resolution $\,\pi: R \sonto A \,$, we constructed a morphism of complexes
\begin{equation}
\la{ccdr}
T_\bullet:\, \rCC_\bullet(A) \to R_n^{\GL_n}\ ,
\end{equation}
that induces the trace maps \eqref{trm3}. Explicitly, \eqref{ccdr} is given by the formula
$$
T(a_1 \otimes \ldots \otimes a_{k+1}) = \sum_{j \in \Z_{k+1}}\, (-1)^{jk}\,
\Tr_n\,[f_{k+1}(a_{1+j}, a_{2+j}, \ldots, a_{k+1+j})]\ ,
$$
where $\, f_{k+1}:\, A^{\otimes (k+1)} \to R \,$ is the $(k+1)$-th component of
the twisting cochain $ \bB(A) \to R $
corresponding to the DG coalgebra map $ f: \bB(A) \to \bB(R) $ such that
$\, \bB(\pi)\circ f =  \id \,$, and $\,\Tr_n:\, R \to   R_n^{\GL_n} \,$ is defined in \eqref{e2s1}.
Again, since $ R_n^{\GL_n} $ is a commutative DG algebra, \eqref{ccdr} extends
to a (unique) map of DG algebras
$$
\bSym(T)_\bullet:\, \bSym[\rCC_\bullet(A)] \to R_n^{\GL_n}
$$
which induces (in the limit $ n \to \infty $) the isomorphism \eqref{anlqt}.

Combining the maps \eqref{liem1} and \eqref{ccdr}, we now define
\begin{equation}
\la{tmn}
\tau_{r,n}(A):\,
\CE_\bullet(\rgl_r(A)) \xrightarrow{\vartheta} \rCC_\bullet(A)[1]
\xrightarrow{s^{-1}}  \rCC_\bullet(A) \xrightarrow{T} R_n^{\GL_n}\ ,
\end{equation}
where $ s^{-1} $ is the inverse of the canonical degree $1$  map
$\,s: \rCC_\bullet(A) \to \rCC_\bullet(A)[1] \,$ inducing
isomorphisms $ \rCC_i(A) \stackrel{\sim}{\to} \rCC(A)[1]_{i+1} $  for
all $i$.
\blemma\la{twmn}
The map \eqref{tmn} is a twisting cochain.
\elemma
\bproof
For any complex $ V \in \Com(k) $, it is straightforward to check that
$$
\tau_V:\, \bSym^{\! c}(V[1]) \onto V[1] \xrightarrow{s^{-1}} V \into \bSym(V)
$$
is a twisting cochain. Now, note that for any $ r $ and $ n $, $\, \tau_{r,n} =
\tau_{r,n}(A) $ can be factored as
$$
\tau_{r,n} = \bSym(T_\bullet) \, \circ\, \tau_V\, \circ \, \bSym^{\! c}(\vartheta_\bullet) \ ,
$$
where $ V = \rCC_\bullet(A) $. Since $  \bSym(T_\bullet) $ is a DG algebra map
and $ \bSym^{\! c}(\vartheta_\bullet) $ is a DG coalgebra map, $ \tau_V $ being a
twisting cochain implies that $ \tau_{r,n} $ is a twisting cochain.
\eproof

It is natural to ask whether the twisting cochain defined by \eqref{tmn}
is acyclic (i.e., satisfies the equivalent conditions of Theorem~\ref{Kosc}).
Although we do not know the answer for $r$ and $n$ finite
({\it cf.} Question~\ref{qkosz} below), the results of the previous section
show that $ \tau_{r,n}(A) $ does become acyclic when we pass to the limit $ r,n \to \infty $.
To make this observation precise define
$$
\tau_{\infty, \infty}(A) := \varprojlim\limits_{n}\,\varinjlim\limits_{r}\,\tau_{r,n}(A)
$$
By Lemma~\ref{twmn}, $ \tau_{\infty, \infty}(A) $ is a twisting cochain
$ \, \CE_\bullet(\rgl_\infty(A)) \to R_\infty^\Tr\,$,
where $ R_\infty^\Tr $ is the trace subalgebra of $ R_\infty^{\GL_{\infty}} $
introduced in Section~\ref{S2.1}.
\bthm\la{KD2}
The twisting cochain $\,\tau_{\infty, \infty}(A) \,$ is acyclic. Thus, for any algebra
$ A \in \Alg_{k/k} $, the Chevalley-Eilenberg coalgebra
$ \CE_\bullet(\rgl_\infty(A)) $ is Koszul dual to the DG algebra $ \DRep_\infty(A)^\Tr $.
\ethm
\bproof
We may factor $ \tau_{\infty,\infty}(A) $  as
$$
\CE_\bullet(\rgl_\infty(A))
\xrightarrow{\bSym^{\! c}\,(\vartheta_\infty)}
\bSym^{\! c} (\rCC_\bullet(A)[1])
\xrightarrow{\tau} \bSym(\rCC_\bullet(A))
\xrightarrow{\bSym(T_\infty)}
R_\infty^\Tr\ ,
$$
where $ \tau = \tau_V $ is a twisting cochain defined in the proof of Lemma~\ref{twmn}.
By the Loday-Quillen-Tsygan Theorem, $\, \bSym^{\! c}\,(\vartheta_\infty) $
is a quasi-isomorphism (in fact, a weak equivalence) in $ \DGC_{k/k} $. By
Proposition~\ref{pS4.3}, $ \bSym(T_\infty) $ is a quasi-isomorphism in $ \DGA_{k/k} $.
Hence, to prove that $ \tau_{\infty, \infty}(A) $ is acyclic, it suffices to prove that $ \tau $ is
acyclic. Choosing any morphism of complexes $\,\rCC_\bullet(A) \to \rHC_\bullet(A) \,$
that induces the identity on homology, we may also factor $ \tau $ as
$$
\bSym^{\! c} (\rCC_\bullet(A)[1])
\stackrel{\sim}{\to}
\bSym^{\! c} (\rHC_\bullet(A)[1])
\xrightarrow{\bar{\tau}} \bSym(\rHC_\bullet(A))
\stackrel{\sim}{\to}
\bSym(\rCC_\bullet(A))\ ,
$$
where $ \bar{\tau} $ is the standard twisting cochain relating the cofree cocommutative
coalgebra of a (graded) vector space to the free commutative algebra of that vector space.
It is well known that such $ \bar{\tau} $ is acyclic (see, e.g.,\cite[Prop.~3.4.13]{LV}).
It follows that $ \tau $ is acyclic, and hence so is $ \tau_{\infty, \infty}(A) $. The second
statement
of the theorem is a formal consequence of the first, since $ \DRep_\infty(A)^\Tr $
is represented in $ \Ho(\cDGA_{k/k}) $ by the DG algebra $ R_\infty^\Tr $.
\eproof

\subsubsection{Questions}
\la{qkosz}
1.\, It would be interesting to determine conditions (or at least, give examples) when
the twisting cochains $ \tau_{r,n}(A) $ are acyclic for finite $r$ and $n$. This certainly
does not hold in general and probably requires some strong homological assumptions on $A$.

2.\, The Chevalley-Eilenberg coalgebras $ \CE_\bullet(\gl_r(A)) $ have a natural
interpretation in terms of formal deformation theory developed in \cite{H1}
(see {\it loc. cit.}, Section~10.4.4). It would be interesting to clarify
how the construction of \cite{BKR} and the present paper fits in the framework of \cite{H1}.

\subsection{Obstruction complex}
\la{S2.4}
We now address the question \eqref{quest} stated in the Introduction.
Given a DG algebra $ A \in \DGA_{k/k} $ and integer $\,n\ge 1\,$, we define the complex
$$
K_{\bullet}(A,n):=\Ker\{ \,\bSym \mathrm{Tr}_n(A)_{\bullet}\,:\,
\bSym[\mathcal{C}(A)] \rar A_n^{\GL}\,\} \mathrm{.}
$$
It is clear that $ K_{\bullet}(A,n) $ is natural in $A$, i.e.
we have a well-defined functor
\begin{equation}
\la{kfun}
K_\bullet(\,\mbox{--}\,, n) :\, \DGA_{k/k} \to \Com_k\ .
\end{equation}

\begin{theorem} \la{tS2.4}
$(a)$ The functor \eqref{kfun} has a total left derived functor $$
\L K_{\bullet}(-,n) \,:\,\Ho(\DGA_{k/k}) \rar \D(k) \,,\,\,\,\,\,\,\,\,A \mapsto K_{\bullet}(QA,n) \mathrm{.}$$
$(b)$  For any $ A \in \DGA_{k/k}$, there is a long exact sequence
$$\begin{diagram}[small]
\ldots &\rTo& \mathrm{HK}_p(A,n) & \rTo & \bSym[\overline{\mathrm{HC}}_{\bullet}(A)]_p &\rTo^{\bSym \mathrm{Tr}_n(A)} &\mathrm{H}_p(A,n)^{\GL} & \rTo^{\partial}& \mathrm{HK}_{p-1}(A,n) &\rTo& \ldots
\end{diagram}$$
where $ \mathrm{HK}_{\bullet}(A,n)\,:=\, \H_{\bullet}[\L K(A,n)]$.
\end{theorem}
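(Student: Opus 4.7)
The plan is to work with the tautological short exact sequence that defines $K_\bullet(R,n)$. By the extended Procesi theorem (Theorem~\ref{pS1.3.1}), $\bSym\mathrm{Tr}_n(R)_{\bullet}$ is degreewise surjective for every $R \in \DGA_{k/k}$, so one has a functorial short exact sequence of complexes
\begin{equation*}
0 \rar K_\bullet(R,n) \rar \bSym[\mathcal{C}(R)] \xrightarrow{\bSym\mathrm{Tr}_n(R)_{\bullet}} R_n^{\GL} \rar 0 \, .
\end{equation*}
The two outer functors are already known to derive well: Theorem~\ref{ftt} handles $\mathcal{C}(-)$ and Theorem~\ref{nS2.1t5} handles $(-)_n^{\GL}$. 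My goal is to propagate this good behaviour to the kernel.

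For part (a), I would invoke Brown's Lemma and verify that $K_\bullet(-,n)$ sends any acyclic cofibration $i: R \into R'$ between cofibrant objects in $\DGA_{k/k}$ to a quasi-isomorphism. Such an $i$ induces a morphism between the short exact sequences displayed above. By Theorem~\ref{ftt}, $\mathcal{C}(i)$ is a quasi-isomorphism; since $k$ has characteristic zero, the Schur-type functor $\bSym$ is exact on complexes, so $\bSym[\mathcal{C}(i)]$ is a quasi-isomorphism as well. By Theorem~\ref{nS2.1t5}, $i_n^{\GL}$ is a quasi-isomorphism. The five lemma, applied to the long exact homology sequences of the two short exact sequences, then forces $K_\bullet(i,n)$ to be a quasi-isomorphism. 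This yields the formula $\L K_\bullet(A,n) = K_\bullet(QA,n)$.

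For part (b), I would simply take the long exact homology sequence of the short exact sequence above with $R = QA$ a cofibrant resolution of $A$. Three identifications enter: $\H_\bullet[K_\bullet(QA,n)] = \mathrm{HK}_\bullet(A,n)$ by the definition in part (a); $\H_\bullet[\bSym[\mathcal{C}(QA)]] \cong \bSym[\rHC_\bullet(A)]$ by Theorem~\ref{ftt}(b) combined with the characteristic-zero fact that $\bSym$ commutes with passage to homology; and $\H_\bullet[(QA)_n^{\GL}] \cong \mathrm{H}_\bullet(A,n)^{\GL_n}$ by Theorem~\ref{nS2.1t5}(b). Substituting these identifications into the long exact homology sequence produces the asserted sequence, with connecting homomorphism $\partial$ of degree $-1$.

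The only technical point is the exactness of $\bSym$ on complexes in characteristic zero, which rests on the semisimplicity of $k[S_n]$ (so that taking $S_n$-invariants, with appropriate sign twist in the odd part, is an exact operation); everything else is routine bookkeeping with the Procesi short exact sequence.
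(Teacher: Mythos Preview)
Your proof is correct and follows essentially the same route as the paper: both use Brown's Lemma together with the functorial short exact sequence coming from Theorem~\ref{pS1.3.1}, invoke the already-established derivability of $\mathcal{C}(-)$ and $(-)_n^{\GL}$ to handle the middle and right terms, and then deduce the kernel case by the five lemma; part~(b) is the long exact homology sequence of the same short exact sequence at $R=QA$. Your treatment is slightly more explicit about why $\bSym$ preserves quasi-isomorphisms in characteristic zero, whereas the paper simply cites the proofs of the relevant results in \cite{BKR}.
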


\begin{proof}
$(a)$\,
By Brown's Lemma (see \cite[Lemma 9.9]{DS}), it suffices to check that \eqref{kfun} maps any acyclic cofibration between cofibrant objects in $\DGA_{k/k}$ to a weak equivalence (quasi-isomorphism) in $ \Com_k $. For any morphism $\,f:\,A \rar B\,$ in $\DGA_{k/k}$, one has the commutative diagram in $\Com_k$
$$
\begin{diagram}
0 & \rTo & K_{\bullet}(A,n) &\rTo & \bSym[\mathcal{C}(A)]  & \rTo^{\bSym \mathrm{Tr}_n(A)} & A_n^{\GL} & \rTo & 0 \\
 &  & \dTo^{K(f,n)} & & \dTo^{\bSym[\mathcal{C}(f)]}  & &  \dTo^{f_n^{\GL}} & & \\
0 & \rTo & K_{\bullet}(B,n) &\rTo & \bSym[\mathcal{C}(B)]  & \rTo^{\bSym\mathrm{Tr}_n(B)}&  B_n^{\GL} & \rTo & 0
\end{diagram}
$$
If $f:A \rar B$ is an acyclic cofibration between cofibrant objects, $\bSym[\mathcal{C}(f)] $ and $f_n^{\GL}$ are quasi-isomorphisms by (the proofs of) Theorem~2.6 and Theorem~3.1 of \cite{BKR}. Hence,
so is $K_{\bullet}(f,n)$. This proves part $(a)$.

$(b)$\, By Theorem~\ref{pS1.3.1}, for any
DG algebra $ R $, there is a short exact sequence in $\Com_k\,$:
\begin{equation}
\la{eS2.4.1}
0 \to  K_{\bullet}(R,n) \to \bSym[\mathcal{C}(R)]
\xrightarrow{\,\bSym \mathrm{Tr}_n(R)\,}  R_n^{\GL} \to 0\ .
\end{equation}
Picking $ R = QA $ to be a(ny) cofibrant resolution of $A$ in
$\DGA_{k/k}$ and applying the homology functor to \eqref{eS2.4.1}
we get the required long exact sequence.
\end{proof}

For $\, A \in \Alg_{k/k}\,$, the homology $\,\mathrm{HK}_{\bullet}(A,n) $ may be viewed as an obstruction to $\mathrm{H}_{\bullet}(A,n)^{\GL} $  attaining its `stable limit' ({\it cf.} also Remarks~\ref{3.4.1} below). By Theorem~\ref{tS2.4}$(b)$, the kernel of the connecting homomorphism $\,\partial\,:\,\mathrm{H}_p(A,n)^{\GL} \rar \mathrm{HK}_{p-1}(A,n)\,$ measures the
failure of $\bSym \mathrm{Tr}_n(A)_{\bullet} $ to induce a surjective map at the level of homology in degree $p\,$.
In negative degrees, the complex $\, K_{\bullet}(A,\,n) \,$ is acyclic, which means, in particular, that the map $ \bSym[\rHC(A)]_0 \to \H_0(A,\,n)^{\GL} $ is surjective. Since $ \bSym[\rHC(A)]_0 =
\Sym[\rHC_0(A)] $ and $ \H_0(A,\,n)^{\GL} = A_n^\GL $, we recover the classical result of Procesi.
On the other hand, the following simple examples show that $\mathrm{HK}_{\bullet}(A,n)$ and $\partial$ may actually be nonzero
in positive homological degrees, even for $ n = 1 $. Thus, the answer to the question stated in the beginning of this section is negative.

\subsubsection{Examples}
\la{ex11}
$(a)$\ Let $ A:= k[x,y] $ be the polynomial algebra
(endowed with some augmentation). It has a cofibrant
resolution of the form $ R = k \langle x,y,t \rangle $ with $ |x|=|y|=0 \,$, $\, |t|=1 \,$ and $\,dt=[x,y]\,$. For $n=1$,
$\H_{\bullet}(A,1) $ is the homology of the commutative DG algebra $R_{\nn} = k[x,y,t] $ with zero differential. Hence,
$$\mathrm{H}_{\bullet}(A,1) \,\cong\, A[t] \text{.}$$
On the other hand, $\overline{\mathrm{HC}}_1(A) \cong \Omega^1(A)/dA\,$. As shown in \cite[Example~4.1]{BKR},
the class of the $1$-form $\,y\,dx\,$  maps to the class of the cycle $t$ in $R_{\natural \natural}$. Hence, in this case
$\,\bSym \mathrm{Tr}_1(A)_{\bullet} \,:\, \bSym [\overline{\mathrm{HC}}(A)] \rar
\mathrm{H}_{\bullet}(A,1) $ is degreewise surjective with kernel $\mathrm{HK}_{\bullet}(A,1)$.
 Since $\overline{\mathrm{HC}}_0(A) = \bar{A} $ as well as $\overline{\mathrm{HC}}_1(A)$ are infinite-dimensional vector spaces, we see that $\mathrm{HK}_{p}(A,1)$ is nonzero (in fact,
 infinite-dimensional over $k$) for all $p \geq 0$.
However, the maps $\partial\,:\, \mathrm{H}_p(A,1) \rar \mathrm{HK}_{p-1}(A,1)$ vanish for all $p \geq 1$ in this case.

$(b)$\ Let $A=k[x]/(x^2)$ be the ring of dual numbers. Then, $A$ has a free resolution of the form
$ R=k\langle x, x_1, x_2,\ldots \rangle $, where $\mathrm{deg}(x)=0$ and $\mathrm{deg}(x_i)=i $ for all $ i \in \mathbb N$. The differential on $R$ is given by the formula
\begin{equation}
\la{dualnum}
dx_i \,=\, x x_{i-1} - x_1 x_{i-2} + x_2 x_{i-3}- \ldots +{(-1)}^{i-1} x_{i-1} x \,\text{.}
\end{equation}
Hence, $\mathrm{H}_{\bullet}(A,1) $ is the homology of the commutative DG algebra $ k[x, x_1,\ldots ,x_i,\ldots ] $ with differential given by the same formula \eqref{dualnum}. This homology
is easy to compute in small degrees (see \cite[Section~6]{BFR}).
In particular, we have
\begin{eqnarray*}
\H_1(A,1) &=& 0 \ , \\
\H_3(A,1) & = & A \cdot (x x_3 - 2x_1 x_2)\ ,\\
\H_5(A,1) & = & A \cdot (-2x_1 x_2^2 + x x_2 x_3) +
A \cdot (-x_2 x_3 - 4 x_1 x_4 + 2 x x_5)\ ,\\
\H_7(A,1) & = & A \cdot (-x_2^2 x_3 - 4 x_1 x_2 x_4 + 2 x x_2 x_5) +
A \cdot (-x_3 x_4 - 2 x_1 x_6+ x x_7) \ ,\\
.\ .\ .\ .\ .\ . & &
\end{eqnarray*}
On the other hand,
$\overline{\HC}_p(A)$ is known to vanish in {\it all} odd degrees (see \cite[Section~4.3]{LQ}). Hence the algebra map
$\bSym \mathrm{Tr}_1(A)_{\bullet}$ is not surjective in this case.

$(c)$\
Let $ A := k\langle x,y \rangle/(x[x,y]y) $. We claim that
$\partial\,:\, \mathrm{H}_{1}(A,1) \to \mathrm{HK}_0(A,1)$ is nonzero. To see this note that
$A$ is a (polynomially) graded algebra (with generators $x,y$ of polynomial degree $1$). Hence, $A$ has a cofibrant resolution that is both homologically
and polynomially graded of the form $R= k\langle x,y,t,u_1,..\rangle$ where $x,y$ have homological degree $0$ and polynomial degree $1$, $t$
has homological degree $1$ and polynomial degree $4$ with $dt=x[x,y]y$ and the variables $ u_1, \ldots $ are bihomogeneous with homological degree
$\geq 2$. Note that the image of $t$ is a $1$-cycle in $R_{\natural \natural}$. However, the image of $t$ in
$\mathcal C(R)$ is not a $1$-cycle. Let $ \bar{t} $ denote the class of the image of $t$ in $\mathrm{H}_1(A,1)\,$.
Then
$\,\partial \bar{t} \,=\, \overline{dt} \,\,\in \,\mathrm{HK}_0(A,1) \,$.
Our claim therefore follows if we verify that $dt$ is not a boundary in $K_{\bullet}(A,1)$. For this, note first
that $\bSym \mathrm{Tr}_1(A)_{\bullet} $ preserves the polynomial grading as well. Hence, as complexes of $k$-vector spaces,
$$ K_{\bullet}(A,1) \cong \oplus_{r \ge 0}\, K_{r,\bullet}(A,1) $$ where $K_{r, \bullet}(A,1)$ is the subcomplex of $K_{\bullet}(A,1)$
spanned by the homogeneous elements of polynomial degree $ r $. Now, it is easy check that $K_{r,1}(A,1)=0$ for $ r \leq 4$. Hence $dt$ is not a boundary in $K_{\bullet}(A,1)$.

\section{De Rham cohomology of derived representation schemes}
\la{Cryscoh}
In this section, we compute the `stable limit' of the de Rham cohomology of the derived
representation scheme $ \DRep_n(A)^\GL $. We begin by recalling the definition of the Karoubi-de Rham homology.

\subsection{Karoubi-de Rham homology} For $ R \in \DGA_k $,
let $\Omega^1 R$ denote the kernel of the multiplication map from $R \otimes R$ to $R$. Note that $\Omega^1 R$
has a natural $R$-bimodule structure. For $r \in R$, we denote the element $1 \otimes r - r \otimes 1 \, \in \Omega^1R $
 by $ \partial r $ (as in~\cite[Section 2.6]{L}). This defines a canonical (universal) derivation
$\partial\,:\,R \rar \Omega^1 R\,,\,\,\,\,r \mapsto \partial r$. We will regard $\partial$ as a degree $-1$ derivation from
$R$ to $\Omega^1R[-1]$ and extend it to a degree $-1$ derivation on
the DG algebra $\mathrm{T}_R (\Omega^1R[-1])$ using the Leibniz rule. The derivation $\partial$ (anti)commutes with the derivation $d_R$ induced by
$R$. The DG algebra $(\mathrm{T}_R (\Omega^1R)[-1], d_R+\partial)$ is the algebra of noncommutative differential forms on $R$. In
what follows, we will assume that $\mathrm{T}_R (\Omega^1R[-1])$ is equipped with the differential $d_R+\partial$ unless
explicitly stated otherwise.

\vspace{1ex}

\begin{definition}
The {\it Karoubi-de Rham homology} of $R$ is defined to be the homology of the
complex $\mathrm{T}_R (\Omega^1R[-1])_{\natural}$. We also define the {\it reduced Karoubi de Rham homology} of $R$
to be the homology of the complex $\mathcal C(\mathrm{T}_R (\Omega^1R[-1]))\,\cong\, \mathrm{T}_R (\Omega^1R[-1])_{\natural}/k  \,$.
We denote the Karoubi-de Rham homology (resp., reduced Karoubi-de Rham homology) of $R$ by $\mathrm{HDR}_{\bullet}(R)$
 (resp., $\overline{\mathrm{HDR}}_{\bullet}(R)$).
\end{definition}

\vspace{1ex}

\blemma \la{P2}
For any free non-negatively graded DG algebra $R$, we have $\,
\overline{\mathrm{HDR}}_{\bullet}(R) = 0 \,$.
\elemma

\begin{proof}
Since $R$ is free, the DG algebra $\mathrm{T}_R (\Omega^1R[-1])$ is free as well. On the other hand, $\mathrm{T}_R (\Omega^1R[-1])$ is the
direct sum total complex of a second quadrant bicomplex whose term in bidegree $(-q,p)$ is $(\Omega^q R)_p$ and whose horizontal differential
is $\partial$. One therefore, has a spectral sequence
$$
E^1_{p,-q}\,:=\, \mathrm{H}_{-q}[(\Omega^{\bullet} R)_p] \ \Rightarrow\ \mathrm{H}_{p-q}[\mathrm{T}_R (\Omega^1R[-1])]\,\text{.}$$
The argument in~\cite[Section 2.5]{CEG} goes through to show that $ E^1_{p,-q} = 0$ unless $p=q=0$, in which case it is isomorphic to $k$.
Thus, $k \into \mathrm{T}_R (\Omega^1R[-1])$ is an acyclic cofibration between cofibrant objects in $\DGA_k$. By the proof of~\cite[Theorem~3.1]{BKR},
$\mathcal C(k) \stackrel{\sim}{\to} \mathcal C(\mathrm{T}_R (\Omega^1R[-1]))$ is a quasi-isomorphism. This proves the desired proposition.
\end{proof}

\subsection{The Karoubi-de Rham complex of a DG representation scheme}
For $B \,\in\, \cDGA_k$, let $ \Omega^1_{\rm com}(B) $ denote the
module of K\"ahler differentials, and let $ \partial_B \,:\,B \rar
\Omega^1_{\rm com}(B) $ be the canonical derivation. View $\partial_B$ as a degree $-1$ derivation $\, B \to \Omega^1_{\rm com}(B)[-1] \,$. 
By the Leibniz rule,  $\partial_B$ extends to a degree $-1$ derivation
on $\bSym_B (\Omega^1_{\rm com} B[-1])$ which (anti)commutes with the differential $d_B$ coming from the intrinsic differential on
$B$. The commutative DG algebra $(\bSym_B (\Omega^1_{\rm com} B[-1]), d_B+\partial_B)$ is called the {\it de Rham algebra}
 of $B$, and  we will denote it henceforth by $\mathrm{DR}(B)$. The 
 de Rham cohomology of $ B $ is then defined to be
$ \mathrm{H}_{\mathrm{DR}}^{n}(B) := \mathrm{H}_{-n}[\mathrm{DR}(B)]$.

\begin{prop} \la{P3}
For any $ R\,\in\,\DGA_k \,$, there is a natural isomorphism  of DG algebras
$$
(\mathrm{T}_R(\Omega^1 R[-1]))_n \,\cong\, \mathrm{DR}(R_n) \ .
$$
\end{prop}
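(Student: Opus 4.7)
My plan is to show that $\mathrm{DR}(R_n)$ and $(\mathrm{T}_R(\Omega^1 R[-1]))_n$ corepresent the same functor $\cDGA_k \to \Sets$; the isomorphism then follows by Yoneda's lemma, with naturality in $R$ automatic. I expect no explicit reduction to free $R$ to be necessary, although such a reduction also works as a backup via Theorem~\ref{comp}.

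From the universal properties of the symmetric algebra $\bSym_{R_n}(-)$ and of the module of K\"ahler differentials $\Omega^1_{\rm com}$, the set $\Hom_{\cDGA_k}(\mathrm{DR}(R_n), B)$ is in natural bijection with pairs $(\psi, D)$, where $\psi\colon R_n \to B$ is a morphism of graded commutative algebras and $D\colon R_n \to B$ is a degree $-1$ $\psi$-derivation satisfying the compatibility $d_B \psi - \psi d_{R_n} = D$; the secondary identity $d_B D + D d_{R_n} = 0$ follows automatically by applying $d_B$ to the previous one and using $d_B^2 = d_{R_n}^2 = 0$. On the other side, Theorem~\ref{nS2.1t1} rewrites $\Hom_{\cDGA_k}((\mathrm{T}_R(\Omega^1 R[-1]))_n, B)$ as $\Hom_{\DGA_k}(\mathrm{T}_R(\Omega^1 R[-1]), \M_n(B))$, and the universal property of the noncommutative forms-and-tensor algebra identifies this with pairs $(\phi, E)$, where $\phi\colon R \to \M_n(B)$ is a graded algebra map and $E\colon R \to \M_n(B)$ is a degree $-1$ $\phi$-derivation with $d_{\M_n(B)} \phi - \phi d_R = E$. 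The bijection $\phi \leftrightarrow \psi$ is the graded version of Theorem~\ref{nS2.1t1} (forget differentials); for derivations I would invoke the square-zero extension trick: $\phi$-derivations $R \to \M_n(B)$ correspond to graded algebra maps $R \to \M_n(B) \ltimes \M_n(B)[-1]$ lifting $\phi$, and $\psi$-derivations $R_n \to B$ correspond to graded commutative algebra maps $R_n \to B \ltimes B[-1]$ lifting $\psi$. Since $B$ is commutative, $\M_n(B \ltimes B[-1]) \cong \M_n(B) \ltimes \M_n(B)[-1]$ as graded algebras, so Theorem~\ref{nS2.1t1} applied to $B \ltimes B[-1] \in \cDGA_k$ matches the two sets of derivations.

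The main obstacle I anticipate is verifying that the differential compatibility $d_B \psi - \psi d_{R_n} = D$ on the commutative side transports exactly to $d_{\M_n(B)} \phi - \phi d_R = E$ on the noncommutative side under these bijections. This, however, is essentially built into the construction: $d_{R_n}$ is by definition the unique differential making the universal representation $\pi_n\colon R \to \M_n(R_n)$ into a morphism of DG algebras (Theorem~\ref{nS2.1t1}), so both compatibility conditions become equivalent to the same statement about lifts to $B \ltimes B[-1]$ in $\cDGA_k$. As an explicit sanity check (or an alternative proof), when $R$ is a free DG algebra on generators $\{x^\alpha\}$, Theorem~\ref{comp} shows that both sides become the free graded commutative algebra on $\{x^\alpha_{ij}\} \cup \{(\partial x^\alpha)_{ij}\}$, with the natural isomorphism sending $(\partial x^\alpha)_{ij} \mapsto \partial_B(x^\alpha_{ij})$; compatibility of the two differentials can then be checked directly on generators, and the general case follows by functoriality since both constructions respect the relevant quotients.
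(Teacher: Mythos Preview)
Your Yoneda argument is correct and gives a genuinely different proof from the paper's. The key observation underlying your approach---which you could state more sharply---is that for any $S \in \DGA_k$, the functor $B \mapsto \Hom_{\DGA_k}\bigl((\mathrm{T}_S(\Omega^1 S[-1]),\, d_S + \partial),\, B\bigr)$ is naturally isomorphic to $B \mapsto \Hom_{\GrAlg_k}(S,\, B)$ (graded algebra maps, differentials forgotten), and similarly on the commutative side. Indeed, once you impose the compatibility $D = d_B\psi - \psi d_{R_n}$, the derivation $D$ is \emph{determined} by $\psi$, and you have checked that the remaining condition $d_B D + D d_{R_n} = 0$ is automatic. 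So both Hom-sets reduce to graded algebra maps $R \to \M_n(B)$, and the graded-level version of Theorem~\ref{nS2.1t1} finishes the argument. Your square-zero extension detour is valid but unnecessary: since $D$ and $E$ are already pinned down by $\psi$ and $\phi$, there is no separate matching of derivations to perform.

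The paper instead first invokes an external result (Van den Bergh's \cite[Lemma~3.3]{VdB}) to get the isomorphism of the underlying graded algebras $(\mathrm{T}_R(\Omega^1 R[-1]),\,d_R)_n \cong (\bSym_{R_n}(\Omega^1_{\rm com}(R_n)[-1]),\,d_{R_n})$, and then uses the Lie algebra map $\tau_n\colon \mathrm{Der}_\bullet(\tilde R) \to \mathrm{Der}_\bullet(\tilde R_n)$ from \cite[Prop.~5.2]{BKR} to verify on generators $r_{ij}$ and $\partial_n r_{ij}$ that $\tau_n(\partial) = \partial_n$. This is more hands-on and relies on machinery set up elsewhere; your argument is cleaner and self-contained. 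Your backup plan (explicit generators for free $R$ via Theorem~\ref{comp}) is closer in spirit to the paper's approach, though the paper does not actually reduce to the free case.
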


\begin{proof}
First, note that the representation functor induces
a canonical isomorphism ({\it cf.} \cite[Lemma~3.3]{VdB})
\begin{equation}\la{candd}
(\mathrm{T}_R (\Omega^1R[-1]), d_R)_n \,\cong\, (\bSym_{R_n} (\Omega^1_{\rm com}(R_n)[-1]), d_{R_n})\,\text{.}
\end{equation}
To simplify the notation we set $ \tilde{R} := \mathrm{T}_R (\Omega^1R[-1]) $ and identify
$ \tilde{R}_n = \bSym_{R_n}(\Omega^1_{\rm com}(R_n)[-1]) $
using \eqref{candd}. Then, 
by \cite[Proposition~5.2]{BKR}, there is a natural map of graded Lie
algebras $\tau_n: \mathrm{Der}_{\bullet}(\tilde{R}) \to
\mathrm{Der}_{\bullet}(\tilde{R}_n)$ such that
 $$ 
 \tau_n(\theta)(r_{ij}) \,=\, (\theta(r))_{ij}\,,\,\,\,\, 1 \leq i,j \leq n \ ,
 $$
for all $ r \in \tilde{R} $ and $\theta \,\in \mathrm{Der}_\bullet (\tilde{R}) $.
In fact, the above formula uniquely determines $\tau_n (\theta)$. To prove the proposition, it therefore suffices to show that $\tau_n (\partial)\,=\,\partial_{n}\,$, where 
$\,\partial := \partial_{\tilde{R}} \,$ and $\,
\partial_n := \partial_{\tilde{R}_n} \,$. For this, it suffices to check that  
$\tau_n (\partial)$ and $ \partial_{n} $ agree on a set of homogeneous generators of 
$\tilde{R}_n $: for example, on the elements of the form $ r_{ij} $ and
$ \partial_{n}r_{ij}$, where $ r \in R $. 
Using \cite[Lemma~5.4]{BKR}, it is easy to show that indeed $\tau_n (\partial)(r_{ij})\,=\, \partial_{n}(r_{ij})$ for all $ r_{ij} $. On the other hand, $\, \tau_n (\partial)^2=0 $ since 
$\partial^2=0$ and $ \tau_n $ is a Lie algebra homomorphism. 
Hence,
$$
\tau_n (\partial)(\partial_{n}r_{ij}) =
\tau_n (\partial)[\tau_n (\partial)r_{ij}]
= \tau_n (\partial)^2(r_{ij})=0 \ ,
$$
which shows that $ \tau_n (\partial) $ and $ \partial_n $ agree
on $ \partial_{n} r_{ij} $ as well.
This completes the proof of the proposition.
\end{proof}

Let $R \stackrel{\sim}{\rar} A$ be a non-negatively graded cofibrant resolution of an\, {\it augmented}\, algebra $ A \in \Alg_{k/k}$. Since $\Omega^1(k) =0$, the DG algebra $\mathrm{T}_R(\Omega^1 R[-1])$ is augmented as well. Hence, we may apply the trace algebra functor $(\,\mbox{--}\,)_{\infty}^{\mathrm{Tr}} $ to $\mathrm{T}_R(\Omega^1 R[-1])$ to obtain an (augmented) commutative DG algebra. Note,
by Proposition~\ref{P3}, $[\mathrm{T}_R(\Omega^1 R[-1])]_{\infty}^{\mathrm{Tr}}$ is the stable limit of $\mathrm{DR}(R_n)^{\GL}$ as $n \rar \infty$. We denote this limit and its homology by
$$
\mathrm{DR}(R_{\infty})^{\mathrm{Tr}} := [\mathrm{T}_R(\Omega^1 R[-1])]_{\infty}^{\mathrm{Tr}} \quad , \quad \H_{\mathrm{DR},\bullet}[\mathrm{DRep}_{\infty}(A)]^{\Tr} := \H_\bullet[\mathrm{DR}(R_{\infty})^{\mathrm{Tr}}]\ .
$$
Theorem~\ref{tS2.3} (or Proposition~\ref{pS4.3}) of Section~\ref{S2.3} then implies

\begin{theorem} \la{T1}
For any $ A \in \Alg_{k/k} \,$, there is an isomorphism of graded algebras
$$
\H_{\mathrm{DR},\bullet}[\mathrm{DRep}_{\infty}(A)]^{\Tr}\,
\cong\, k \,\text{.}
$$
\end{theorem}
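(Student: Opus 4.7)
The plan is to combine Proposition~\ref{pS4.3} with Lemma~\ref{P2} directly, bypassing any need to compute with a specific resolution. Fix a non-negatively graded cofibrant resolution $\,\pi: R \sonto A\,$ in $\DGA_{k/k}$, and set $\,S := \mathrm{T}_R(\Omega^1 R[-1])\,$ equipped with its total differential $d_R + \partial$. Since $R$ is cofibrant in $\DGA_{k/k}$, $R$ is free as a graded algebra (a retract of a free algebra; for simplicity I assume it is actually free, since any retract can be handled similarly), whence $S$ is also free as a graded algebra: its generators are the generators of $R$ together with their formal derivatives. Moreover, because $\Omega^1(k) = 0$, the augmentation $\varepsilon_R: R \to k$ extends uniquely to an augmentation $\varepsilon_S: S \to k$, so that $S$ is a free object of $\DGA_{k/k}$.

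With $S$ so placed, Proposition~\ref{pS4.3} applies and yields an isomorphism of commutative DG algebras
$$
\bSym\, \mathrm{Tr}_{\infty}(S)_{\bullet}\,:\, \bSym[\mathcal{C}(S)] \stackrel{\sim}{\to} S_{\infty}^{\mathrm{Tr}}\ .
$$
By the very definition of $\mathrm{DR}(R_\infty)^\Tr$ combined with Proposition~\ref{P3}, the right-hand side is exactly $\mathrm{DR}(R_\infty)^\Tr$. Taking homology on both sides, I obtain an isomorphism of graded commutative algebras
$$
\H_{\bullet}\!\bigl(\bSym[\mathcal{C}(S)]\bigr) \,\cong\, \H_{\mathrm{DR},\bullet}[\mathrm{DRep}_{\infty}(A)]^{\Tr}\ .
$$

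To finish, I invoke Lemma~\ref{P2}: the reduced Karoubi--de Rham complex $\mathcal{C}(S) = \overline{\mathrm{HDR}}_\bullet$-complex of the free DG algebra $R$ is acyclic. Since $k$ has characteristic zero, the graded symmetric algebra functor $\bSym$ preserves quasi-isomorphisms between complexes of $k$-vector spaces. Consequently $\bSym[\mathcal{C}(S)]$ is quasi-isomorphic to $\bSym(0) = k$, so its homology is $k$ concentrated in degree $0$. Combined with the preceding display, this gives $\H_{\mathrm{DR},\bullet}[\mathrm{DRep}_{\infty}(A)]^{\Tr} \cong k$ as graded algebras.

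The only genuinely delicate point is verifying that all the hypotheses of Proposition~\ref{pS4.3} apply to $S$: namely that $S$ lies in $\DGA_{k/k}$ and is free as a graded algebra. The first follows from $\Omega^1(k)=0$, and the second follows because the universal bimodule $\Omega^1 R$ of a free algebra $R = T_kV$ is a free $R$-bimodule on $V$, so $\mathrm{T}_R(\Omega^1R[-1])$ is the tensor algebra on $V \oplus V[-1]$. Once this is in place the remainder is formal: the two key inputs (Proposition~\ref{pS4.3} and Lemma~\ref{P2}) do essentially all the work, and characteristic zero ensures $\bSym$ descends to homology.
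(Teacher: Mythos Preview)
Your proof is correct and follows essentially the same route as the paper's: apply Proposition~\ref{pS4.3} to the free augmented DG algebra $S=\mathrm{T}_R(\Omega^1R[-1])$, then invoke Lemma~\ref{P2} to see that $\mathcal{C}(S)$ is acyclic, and finally use that $\bSym$ commutes with homology in characteristic zero. The only cosmetic differences are that the paper writes the homology isomorphism as $\bSym[\overline{\mathrm{HDR}}_{\bullet}(R)]$ before appealing to Lemma~\ref{P2}, and that your invocation of Proposition~\ref{P3} is not strictly needed since $\mathrm{DR}(R_\infty)^{\Tr}$ is \emph{defined} to be $S_\infty^{\Tr}$.
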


\begin{proof}
By Proposition~\ref{pS4.3},  $\,\mathrm{DR}(R_{\infty})^{\mathrm{Tr}} \cong \bSym[\FT(\mathrm{T}_R(\Omega^1 R[-1]))] $. Taking homology, we get
$$
\mathrm{H}_{\mathrm{DR},\bullet}(\DRep_{\infty}(A))^{\Tr}\,\cong\, \bSym[\overline{\mathrm{HDR}}_{\bullet}(R)] \ .
$$
Since $R$ is concentrated in non-negative degrees, the result follows from Lemma~\ref{P2}.
\end{proof}
\subsubsection{Remark}
\la{RKal}
If $R$ is a DG resolution of $A$ concentrated in non-negative degrees, the de Rham cohomology
$ \H^\bullet_{\mathrm{DR}}(R_n) $ is known to be isomorphic to the {\it crystalline cohomology}
$\,\H_{\mathrm{crys}}^{\bullet}[\Rep_n(A)]$ of the classical representation scheme of $A$
(see \cite{FT}). Hence
$$
\H_{\mathrm{DR}, \bullet}(\DRep_n(A))^{\GL} \, \cong \ \H_{\mathrm{crys}}^{-\,\bullet}[\Rep_n(A)]^{\GL} \ ,\quad \forall\,n \ge 1 \ .
$$
The result of Theorem~\ref{T1} is therefore unsurprising: it can be viewed as yet another manifestation of the fact that $ \Rep_n(A)/\!/\GL_n $  `smoothens out' into an affine space
as $ n \rar \infty$ ({\it cf.} \cite[Remark 4.3]{Gi}). It is interesting to compare this with the conjecture of Kontsevich and
To\"en on de Rham cohomology of the derived moduli {\it stack} $\, {\mathcal M}(A) $ classifying all finite-dimensional DG modules over a (smooth proper) DG algebra $A$:
roughly speaking, the de Rham cohomology $ \H_{\mathrm{DR}}({\mathcal M}(A)) $ is expected to be isomorphic to the graded symmetric algebra of the periodic cyclic homology $ \overline{\mathrm{HP}}_{\bullet}(A) $ ({\it cf.} \cite[Conjecture~8.6]{Ka}).

\section{Stabilization theorem for bigraded DG algebras}
\la{S3}
 In this section, we extend Theorem~\ref{tS2.3} to DG algebras equipped with an additional (polynomial) grading. We begin by recalling the definition and establishing basic properties of such bigraded
 differential algebras. For many interesting examples we refer the reader
to \cite{Pi}.

\subsection{Bigraded algebras} \la{S3.1}

A {\it bigraded DG algebra} is a DG $k$-algebra of the form $ A = \oplus_{r=0}^{\infty} A(r)$ such that

\begin{enumerate}
\item[(i)] For each $\, r \ge 0\,$, $\,A(r)$ is a complex of $k$-vector spaces concentrated in non-negative homological degrees. Thus $A(r)_n$ consists of
the elements of homological degree $n$ and polynomial degree $r$.

\item[(ii)] $A(r)_n \cdot A(s)_m \subseteq A(r+s)_{m+n}$, where `$\,\cdot \,$' denotes the multiplication in $A$.

\item[(iii)] The differential on $A$ satisfies the (graded) Leibniz rule with respect to homological grading.

\item[(iv)] $A(0)=k$. Hence, $A$ is naturally augmented over $k$.

\end{enumerate}

\vspace{1ex}

Note that (iii) and (iv) together imply that there is a forgetful functor $\bDGA_k \rar \DGA_{k/k}^+$ from the category of
bigraded DG algebras to the category of nonnegatively graded DG algebras augmented over $k$. One similarly defines the category of bigraded
commutative DG algebras. In all cases, parities that appear whenever the Koszul sign rule is applied come from
homological degrees only.

\subsection{A model structure on $\bDGA_k$} \la{A.1} In this section, we outline a proof that the category $\bDGA_k$ (resp., $\bcDGA_k$) is a model category. This does not follow automatically from
a general theorem of \cite{H} since the bigraded DG algebras do not
seem to be controlled by any operad. Instead, we will use a direct approach of \cite{M} and \cite{J}. We begin with the following important definition.

\subsubsection{Noncommutative Tate resolutions} \la{A.1.1} Let $S \in \bDGA_k$. A {\it noncommutative Tate extension} $S \rar R$
of $S$ is a morphism in $\bDGA_k$ such that there exists a sequence $ V^{(0)} \subseteq V^{(1)} \subseteq \ldots $ of bigraded $k$-vector spaces satisfying

\begin{enumerate}
\item[(i)] each $ V^{(i)} $ is concentrated in non-negative homological degree and positive polynomial
degree.

\item[(ii)] $ S \ast_k T(V^{(i)}) \,\subset \, R $ as bigraded algebras, with $\bigcup_{n=0}^{\infty}\, S \ast_k T(V^{(n)}) = R $ as bigraded algebras.

\item[(iii)] If $d$ is the differential on $R$, then $d V^{(i)} \subseteq S \ast_k T(V^{(i-1)})$.
\end{enumerate}

The notion of a Tate resolution in $\bcDGA_k$ is defined analogously, with $\ast_k$ replaced by $\otimes_k$ and $T(\,\mbox{--}\,)$ replaced by $\bSym(\,\mbox{--}\,)\,$ (see, e.g., \cite{GS}).

\begin{lemma}
\la{lA1.1.1}
$(i)$ Every morphism $S \rar A$ in $\bDGA_k$ factors as $S \stackrel{i}{\longrightarrow} R \stackrel{p}{\longrightarrow}  A\,$,
 where $i$ is a noncommutative Tate extension and $p$ is a surjective quasi-isomorphism.

$(ii)$ Noncommutative Tate extensions satisfy the left lifting property with respect to surjective quasi-isomorphisms.
\end{lemma}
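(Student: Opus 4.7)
The plan is to follow the classical step-by-step construction of (noncommutative) Tate resolutions, with extra care to respect the polynomial grading.

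\emph{Part (i).} I will construct the factorization degree-by-degree in the polynomial grading. Set $R^{(-1)} := S$ with $p^{(-1)} := f$ the given map to $A$. Inductively on $r \geq 0$, I build an increasing family of noncommutative Tate extensions $R^{(r-1)} \subseteq R^{(r)}$ in $\bDGA_k$ together with compatible maps $p^{(r)}: R^{(r)} \to A$ extending $p^{(r-1)}$, so that the restriction $p^{(r)}\bigr|_{R^{(r)}(s)}: R^{(r)}(s) \to A(s)$ is a surjective quasi-isomorphism of complexes for every $s \leq r$. At the $r$-th step only generators of polynomial degree exactly $r$ are adjoined: these do not affect $R^{(r-1)}$ in polynomial degrees $<r$, and two such new generators multiply into polynomial degree $\geq 2r > r$, so new generators contribute to $R^{(r)}(r)$ only linearly. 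Within polynomial degree $r$ I run the usual small-object construction in homological degree: first adjoin cycles in homological degree $0$ to make $H_0$ surject onto $A(r)_0$, then at each successive homological level adjoin generators either to lift elements of $A(r)_m$ not in the image (with appropriate differentials) or to bound cycles in the kernel of $p^{(r)}$ of homological degree $m-1$. The union $R := \bigcup_r R^{(r)}$ with $p := \bigcup p^{(r)}$ yields the desired factorization; a filtration $V^{(n)}$ witnessing that $i: S \to R$ is a Tate extension is obtained by re-indexing all adjoined generators in the order of their construction.

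\emph{Part (ii).} Given a Tate extension $i: S \to R$ with filtration $V^{(0)} \subseteq V^{(1)} \subseteq \ldots$, a surjective quasi-isomorphism $p: X \to Y$ in $\bDGA_k$, and a commuting square with legs $f: S \to X$ and $g: R \to Y$, I construct a diagonal filler $\phi: R \to X$ by extending across each $V^{(n)}$ in turn. For a homogeneous generator $v \in V^{(n)}$ of bidegree $(r,m)$, the element $dv$ lies in $S \ast_k T(V^{(n-1)})$, hence $\phi(dv) \in X_{m-1}$ of polynomial degree $r$ is already defined; moreover $\phi(dv)$ is a cycle since $d^2 v = 0$, and $p(\phi(dv)) = g(dv) = d\,g(v)$. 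Since $p$ is surjective and bigraded, pick $u \in X_m$ of polynomial degree $r$ with $p(u) = g(v)$. Then $du - \phi(dv)$ is a cycle in $\ker(p)_{m-1}$ of polynomial degree $r$, and $\ker(p)$ is acyclic in each polynomial degree separately (from the long exact sequence attached to $0 \to \ker p \to X \to Y \to 0$ restricted to each polynomial component, using that $p$ is both surjective and a quasi-isomorphism there). Hence there exists $w \in \ker(p)_m$ of polynomial degree $r$ with $dw = du - \phi(dv)$, and $\phi(v) := u - w$ satisfies $d\phi(v) = \phi(dv)$ and $p(\phi(v)) = g(v)$. Extending multiplicatively defines $\phi$ on $S \ast_k T(V^{(n)})$, and passing to the union gives the required lift.

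\emph{Main obstacle.} The principal subtlety lies in part (i): one must organize the construction so that it terminates in polynomial degree $r$ before touching polynomial degree $r+1$, and so that the filtration $V^{(n)}$ assembled at the end really satisfies $d V^{(n)} \subseteq S \ast_k T(V^{(n-1)})$. Both requirements rely crucially on the convention that Tate generators have \emph{positive} polynomial degree, which bounds below the polynomial complexity introduced at each stage and prevents the process from looping back on itself. By contrast, the bigraded acyclicity of $\ker(p)$ needed in part (ii) is automatic, since surjectivity and quasi-isomorphism are inherited componentwise in the polynomial grading.
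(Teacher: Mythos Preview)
Your proposal is correct and is precisely the argument the paper has in mind: the paper's own proof consists solely of the remark that parts (i) and (ii) are analogous to Propositions~3.1 and~3.2 of \cite{FHT}, and you have written out exactly that classical Tate/small-object construction and the standard inductive lifting, adapted to respect the extra polynomial grading. Your observation that generators of polynomial degree $r$ contribute only linearly to $R^{(r)}(r)$ (because $S(0)=k$) is the right bookkeeping device for the bigraded setting; the only place to be slightly more explicit is the phrase ``with appropriate differentials'' in (i), where one should note that the needed cycle lifting $da$ exists because $H_{m-2}(\ker p)=0$ by the inductive hypothesis.
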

\begin{proof}
 The proof of $(i)$ (resp., $(ii)$) is similar to the proof of Proposition 3.1 (resp., Proposition 3.2) of~\cite{FHT}.
\end{proof}

Let $x$ be a bihomogeneous variable in positive homological as well as positive polynomial degree. Consider the complex of graded vector spaces
$$
V_x := [0 \rar k.x \xrightarrow{d} k.dx \rar 0]\ ,
$$
with the polynomial degree of $dx$ equal to that of $x$. Then,
\begin{lemma} \la{lA1.1.2}
$(i)$ $S \rar S \ast_k T(V_x)$ has the left lifting property with respect to any morphism in $\bDGA_k$ that
is surjective in all positive homological degrees.

$(ii)$ Any morphism of the form $S \rar S \ast_k (\amalg_{\lambda \in I} T(V_{x_{\lambda}}))$ has the left lifting property
with respect to any morphism in $\bDGA_k$ that is surjective in all positive homological degrees. Here, $I$ is any (possibly uncountably infinite)
indexing set.
\end{lemma}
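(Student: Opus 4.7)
The plan is to reduce both parts to a direct, pointwise lifting using the universal property of free products in $\bDGA_k$. For part~$(i)$, consider a lifting problem consisting of a commutative square whose left vertical map is the canonical inclusion $S \into S \ast_k T(V_x)$, right vertical map is some $p\,:\,A \onto B$ in $\bDGA_k$ that is surjective in all positive homological degrees, top arrow $\alpha\,:\,S \rar A$, and bottom arrow $\beta\,:\,S \ast_k T(V_x) \rar B$. By the universal property of the free product, specifying a lift $h\,:\,S \ast_k T(V_x) \rar A$ compatible with $\alpha$ is equivalent to specifying a DG algebra morphism $\varphi\,:\,T(V_x) \rar A$ whose composition with $p$ agrees with the restriction of $\beta$ to $T(V_x)$. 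Since $T(V_x)$ is free as a DG algebra on the single bihomogeneous generator $x$ (with $dx$ its differential), such a morphism $\varphi$ is uniquely determined by the choice of an element $a \in A$ in the same bidegree as $x$, via $x \mapsto a$ and $dx \mapsto da$.

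To find $a$, put $b := \beta(x) \in B$. Since $x$ has strictly positive homological degree by assumption, $p$ is surjective in that degree, and we may pick any $a \in A$ with $p(a) = b$. Automatically $p(da) = d(p(a)) = d\beta(x) = \beta(dx)$, because both $p$ and $\beta$ commute with differentials. Thus the resulting morphism $\varphi$ lifts $\beta|_{T(V_x)}$ through $p$, and assembling $\varphi$ with $\alpha$ via the coproduct universal property yields the desired $h$.

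Part~$(ii)$ is a direct extension of the same argument. A DG algebra morphism out of the free product $\amalg_{\lambda \in I} T(V_{x_\lambda})$ is the same data as a family of DG algebra morphisms, one out of each $T(V_{x_\lambda})$, with no compatibility conditions between distinct factors. Given the analogous lifting square with $S \ast_k (\amalg_{\lambda} T(V_{x_\lambda}))$ in the lower-left corner, one applies the construction above to each index $\lambda$ independently, invoking the axiom of choice to simultaneously select elements $a_\lambda \in A$ lifting $\beta(x_\lambda) \in B$ over the (possibly uncountable) index set $I$. Assembling the resulting maps $T(V_{x_\lambda}) \rar A$ into a single morphism on the coproduct and combining with $\alpha$ produces the lift. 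There is no genuine obstacle here: the only substantive input throughout is the positivity assumption on the homological degree of each $x_\lambda$, which is precisely what guarantees that $\beta(x_\lambda)$ can be lifted before one descends to $dx_\lambda$; the argument would fail outright in homological degree $0$, where $p$ need not be surjective.
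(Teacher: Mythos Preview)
Your proof is correct and is the standard argument; the paper itself states this lemma without proof, so there is nothing to compare against beyond noting that your approach is exactly the expected one.
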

Now, we define the {\it weak equivalences} in $\bDGA_k$ to be the class of all quasi-isomorphisms and the {\it fibrations} to be the class of morphisms that are surjective in positive homological degrees.
The {\it cofibrations} are then the morphisms in $\bDGA_k$ that have the left lifting property with respect to acyclic fibrations. In particular, the noncommutative Tate extensions are cofibrations, and in fact, Lemma~\ref{lA1.1.1}$(ii)$ implies
that any cofibration is a retract of a noncommutative Tate extension  ({\it cf.} \cite{M}).
By definition, the morphisms in Lemma~\ref{lA1.1.2}$(ii)$ are acyclic cofibrations in $\bDGA_k$. We refer to such morphisms as {\it special acyclic cofibrations}.

\subsubsection{} \la{A.1.2} We now claim
\begin{prop}
\la{pA1.2}
 With weak equivalences, fibrations and cofibrations defined as above, $\bDGA_k$ is a model category.
\end{prop}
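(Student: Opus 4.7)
The plan is to verify Quillen's five axioms MC1--MC5 for $\bDGA_k$ with the classes introduced in Section~\ref{A.1.1}, following the classical strategy used in the non-bigraded setting (see, e.g., \cite{FHT} or \cite{H}). Axioms MC1 (existence of finite limits and colimits), MC2 (two-out-of-three for quasi-isomorphisms) and MC3 (closure of each of the three classes under retracts) are routine: all limits and colimits in $\bDGA_k$ are constructed on the underlying bigraded complexes (the bigraded tensor product gives the coproduct, etc.), and the three defining classes are manifestly closed under retracts since quasi-isomorphism, surjectivity in positive homological degrees, and the LLP against a fixed class of morphisms all are.

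The first half of the factorization axiom MC5---every morphism factors as a cofibration followed by an acyclic fibration---is precisely the content of Lemma~\ref{lA1.1.1}(i), since a noncommutative Tate extension is a cofibration by definition (combine Lemma~\ref{lA1.1.1}(ii) with the fact that acyclic fibrations are the surjective quasi-isomorphisms) and a surjective quasi-isomorphism is an acyclic fibration. For the second half of MC5---any morphism factors as an acyclic cofibration followed by a fibration---I would run the small object argument using as generating acyclic cofibrations the maps $S \to S \ast_k T(V_x)$ over all bihomogeneous variables $x$ in positive homological and positive polynomial degree. Lemma~\ref{lA1.1.2}(ii) shows that transfinite compositions of pushouts of these generators---i.e., special acyclic cofibrations---have the LLP against every fibration, and they are acyclic because each $T(V_x)$ is contractible, so by a bigraded K\"unneth argument $S \into S \ast_k (\amalg_\lambda T(V_{x_\lambda}))$ is a quasi-isomorphism.

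The lifting axiom MC4 then follows by the usual retract trick. One direction (cofibrations lift against acyclic fibrations) is definitional. For the other direction, let $i: A \to B$ be an acyclic cofibration. Apply the MC5 factorization just constructed to write $i = p \circ j$ with $j: A \to B'$ a special acyclic cofibration and $p: B' \to B$ a fibration. By two-out-of-three, $p$ is a quasi-isomorphism, hence an acyclic fibration. Since $i$ is a cofibration, the commutative square with $j$ on top and $\mathrm{id}_B$ on the bottom admits a lift $B \to B'$, exhibiting $i$ as a retract of $j$. Any fibration now has the RLP against $i$ because it has the RLP against $j$ by Lemma~\ref{lA1.1.2}(ii).

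The main technical obstacle is the small object argument: one must verify that the generating acyclic cofibrations $S \to S \ast_k T(V_x)$ have sequentially small domains relative to fibrations, so that the transfinite iteration terminates. Because each $V_x$ sits in strictly positive homological \emph{and} polynomial degree, and because every object of $\bDGA_k$ satisfies $A(0) = k$ and is nonnegatively homologically graded, a filtration by total bidegree shows that in any fixed bidegree $(r,n)$ only finitely many cells affect the component of the construction, so countable iteration suffices. The second delicate point is verifying acyclicity of arbitrary (possibly transfinite) special acyclic cofibrations; this follows by writing the colimit as a filtered colimit of finite coproducts $S \ast_k T(V_{x_1}) \ast_k \cdots \ast_k T(V_{x_m})$, each of which is quasi-isomorphic to $S$ via the retraction sending $x_i, dx_i \mapsto 0$ (compare Lemma~\ref{lA2.1}), and then invoking the exactness of filtered colimits of complexes. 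The entirely analogous argument, with $\ast_k$ replaced by $\otimes_k$ and $T(V_x)$ by $\bSym(V_x)$, simultaneously handles the commutative case $\bcDGA_k$.
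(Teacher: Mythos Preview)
Your argument is correct and the overall architecture---verify MC1--MC3 directly, use Lemma~\ref{lA1.1.1} for the first half of MC5, produce the second factorization, then derive MC4(ii) by the retract trick---matches the paper's. The one substantive difference is how you obtain the (acyclic cofibration, fibration) factorization. You propose to run the small object argument with generating acyclic cofibrations $k \to T(V_x)$, and accordingly you flag smallness of domains and acyclicity of transfinite special acyclic cofibrations as the technical obstacles. The paper bypasses all of this with a one-step construction (Lemma~\ref{lA1.2.1}): given $f: S \to A$, adjoin a contractible cell $T(V_{x_a})$ for \emph{every} bihomogeneous $a \in A$ of positive homological degree, sending $x_a \mapsto a$ and $dx_a \mapsto da$; the inclusion $S \into S \ast_k (\amalg_a T(V_{x_a}))$ is then a special acyclic cofibration by definition, and the induced map to $A$ is tautologically surjective in positive homological degree, hence a fibration. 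No iteration, no smallness, no colimit acyclicity is needed. Your route would certainly work (and is the standard template when a one-shot construction is unavailable), but here the fact that fibrations are characterized purely by a surjectivity condition in positive degrees makes the direct construction both shorter and cleaner. The MC4(ii) argument you give is identical to the paper's.
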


To prove Proposition~\ref{pA1.2} we need the following lemma (which is analogous to \cite[Lemma 2]{J}).

\begin{lemma}\la{lA1.2.1}
 Every morphism $f:S \rar A$ in $\bDGA_k$ factors as $S \stackrel{i}{\longrightarrow} R \stackrel{p}{\rar} A\,$,
where $i$ is a special acyclic cofibration and $p$ is a fibration.
\end{lemma}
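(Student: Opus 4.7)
The plan is to give a one-step factorization by adjoining to $S$ a free acyclic ``cell'' for every bihomogeneous element of $A$ of positive homological degree. Since $A(0)=k$, every such element automatically has strictly positive polynomial degree as well, so it is eligible to index a variable of the kind allowed in the definition of a special acyclic cofibration given in Section~\ref{A.1.2}. Precisely, let $\Lambda$ denote the set of all bihomogeneous $a\in A(r)_n$ with $n\ge 1$ (hence $r\ge 1$), introduce for each $a\in\Lambda$ a formal variable $x_a$ of homological degree $n$ and polynomial degree $r$, and set
\[
R\,:=\,S\ast_k\Bigl(\amalg_{a\in\Lambda}T(V_{x_a})\Bigr)\,,
\]
with $i:S\hookrightarrow R$ the canonical inclusion into the free product.

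Next I would define $p:R\to A$ in $\bDGA_k$ to be the unique morphism of bigraded algebras extending $f$ on $S$ by the assignments $p(x_a):=a$ and $p(dx_a):=d_A(a)$. The differential on $A$ has bidegree $(-1,0)$, so $d_A(a)\in A(r)_{n-1}$ matches the bidegree of $dx_a$; and by construction $p$ commutes with the differential on the generators of each factor $T(V_{x_a})$, hence on all of $R$ by the Leibniz rule. Thus $p$ is a well-defined morphism in $\bDGA_k$ with $p\circ i=f$.

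Finally, $i$ is a special acyclic cofibration directly from the definition in Section~\ref{A.1.2}. For the fibration condition on $p$, any homogeneous element $a\in A$ of positive homological degree decomposes into bihomogeneous pieces $a=\sum_r a_r$ with $a_r\in A(r)_n$, and each piece equals $p(x_{a_r})$ by construction; hence the image of $p$ contains every element of $A$ of positive homological degree, which is precisely what it means for $p$ to be a fibration in $\bDGA_k$. The whole argument is essentially formal, and I do not foresee a real obstacle beyond the straightforward bookkeeping of bidegrees and the standard fact that a DG algebra morphism out of a coproduct of tensor algebras is uniquely determined by its values on generators.
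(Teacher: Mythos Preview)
Your proposal is correct and is essentially identical to the paper's own proof: both index the adjoined cells by the set of bihomogeneous elements of $A$ of positive homological degree, form $R=S\ast_k\bigl(\amalg_a T(V_{x_a})\bigr)$, and define $p$ by $x_a\mapsto a$, $dx_a\mapsto d_A a$. Your additional remark that $A(0)=k$ forces the polynomial degree of each (nonzero) such $a$ to be positive is a useful clarification that the paper leaves implicit.
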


\begin{proof}
Consider the set $I$ of all bihomogeneous $a \in A$ in positive homological degree. For each $a \in I$, consider the morphism $p_a\,:\, T(V_{x_a}) \rar A$ in $\bDGA_k$ given by $x_a \mapsto a,\,\,dx_a \mapsto da$. Put $i$ to be the inclusion $S \rar S\ast_k (\amalg_{a \in I} T(V_{x_a}))$ and $p$ to be the morphism $f \ast_k \amalg_{a \in I} p_a\,:\,  S\ast_k (\amalg_{a \in I} T(V_{x_a})) \rar A$. By construction, $f=pi$, $i$ is a special acyclic cofibration and $p$ is a fibration.
\end{proof}

The first three axioms (MC1), (MC2) and (MC3) of a model category (see \cite[3.3]{DS})
are easily verified for $\bDGA_k$. Lemmas~\ref{lA1.1.2}$(ii)$ and~\ref{lA1.2.1} together imply (MC5). (MC4)(i) is satisfied by definition. To prove Proposition~\ref{pA1.2}, we therefore only need to verify (MC4)(ii). For this, note that Lemma~\ref{lA1.2.1} implies that any acyclic cofibration is a retract of a standard acyclic cofibration: indeed, if $i$ is an acyclic cofibration, $i=p\tilde{i}$ where $\tilde{i}$ is a standard acyclic cofibration and $p$ is a (necessarily acyclic) fibration. Since $i$ has the left lifting property with respect to $p$, $i$ is a retract of $\tilde{i}$. Since Lemma~\ref{lA1.1.2}$(ii)$ states that any standard acyclic cofibration has the left lifting property with respect to all fibrations, any acyclic cofibration has the left lifting property with respect to all fibrations. This completes the proof of Proposition~\ref{pA1.2}.

\subsubsection{} \la{A.1.3}

We remark that $\bcDGA_k$ has a model structure where fibrations are surjections in positive homological degree, weak equivalences are quasi-isomorphisms and cofibrations are retracts of the classical (commutative) Tate resolutions. The proof of this fact is similar to that of Proposition~\ref{pA1.2} except for the following modifications: free products are replaced by tensor products and bigraded DG algebras of the form $T(V_x)$ are replaced by corresponding bigraded commutative DG algebras of the form $\bSym(V_x)$.

\subsection{Bigraded representation functors} \la{3.2} The functors
 $\,(\,\mbox{--}\,)_n\,$, $\,(\,\mbox{--}\,)_n^{\GL}\,\,$ and $\,\bSym[\mathcal{C}(\,\mbox{--}\,)]\,$ defined originally as
 functors $ \,\DGA^+_{k/k} \to \cDGA^+_k $  enrich to functors from $\bDGA_k$ to $\bcDGA_k$. Furthermore, the functors
$(\mbox{--})_{{\infty}},\, (\mbox{--})_{\infty}^{\mathrm{Tr}}\,:\, \bDGA_k \rar \bcDGA_k$ can be defined exactly in the same way as the
corresponding functors from $\DGA_{k/k}$ to $\cDGA_k$. However, we caution the reader that the functors
$\,(\mbox{--})_{{\infty}},\, (\mbox{--})_{\infty}^{\mathrm{Tr}}\,:\, \bDGA_k \rar \bcDGA_k\,$ are {\it not} enrichments of the corresponding functors from $\DGA_{k/k}$ to $\cDGA_k$. The difference here comes from the fact that we take the inverse limit in $\bcDGA_k$ rather than in $\cDGA_k\,$: the forgetful functor
$\bcDGA_k \rar \cDGA_k$ does not preserve infinite limits in general. Yet another illustration of the difference
between the inverse limits in $\bcDGA_k$ and $\cDGA_k$ is the following important proposition.

\begin{prop} \la{pS3.2}
 Let $k \rar R$ be a noncommutative Tate extension in $\bDGA_k$ with finitely many generators in each
polynomial degree. Then there is an isomorphism $\,R_{\infty}^{\mathrm{Tr}} \cong R_{{\infty}}^{\GL} \,$ in $\,\bcDGA_k\,$.
\end{prop}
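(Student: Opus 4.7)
The plan is to establish the stronger statement that $R_\infty^{\mathrm{Tr}}(r) = R_\infty^{\GL}(r)$ holds in each fixed polynomial degree $r$; the conclusion then follows because morphisms in $\bcDGA_k$ preserve the polynomial grading, so the inverse limit defining $R_\infty$ decomposes as $\bigoplus_r \varprojlim_n R_n(r)$. Since $k \to R$ is a Tate extension with finitely many generators in each (positive) polynomial degree, the explicit presentation of Section~\ref{explpr} shows that $R_n(r)$ only involves matrix entries $x^\alpha_{ij}$ coming from generators $x^\alpha$ of polynomial degree at most $r$, and is of finite type in each homological degree.

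The key step is the classical stability assertion: the restriction map $R_{n+1}(r)^{\GL_{n+1}} \to R_n(r)^{\GL_n}$ is an isomorphism once $n \geq r$. By Procesi's theorem (Theorem~\ref{pS1.3.1}), $R_n^{\GL_n}$ is generated by traces $\mathrm{Tr}_n[X^w]$ indexed by cyclic words $w$ in the generators $x^\alpha$, and the restriction map sends $\mathrm{Tr}_{n+1}[X^w]$ to $\mathrm{Tr}_n[X^w]$, preserving this generating set. In polynomial degree $r$, only cyclic words of total polynomial degree $r$---and therefore of length at most $r$---contribute. The relations among these traces in $R_n^{\GL_n}$ are controlled by the second fundamental theorem of invariant theory in the $\Z_2$-graded setting (compare Section~\ref{SI.1}): they are generated by Cayley--Hamilton-type antisymmetrizations over $n+1$ indices, and consequently only constrain traces of words of length strictly greater than $n$. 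For $n \geq r$, no such relation meets polynomial degree $r$, so the kernel of $\bSym\mathrm{Tr}_n$ in polynomial degree $r$ is independent of $n$ once $n \geq r$.

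Given this stability, $R_\infty^{\GL}(r) = \varprojlim_n R_n(r)^{\GL_n} = R_n(r)^{\GL_n}$ for any $n \geq r$. Procesi's theorem says that $\bSym\mathrm{Tr}_n$ is degreewise surjective onto $R_n^{\GL_n}$, so its stable image $R_\infty^{\mathrm{Tr}}(r)$ already fills up $R_\infty^{\GL}(r)$, giving the desired equality of underlying bigraded vector spaces. Compatibility with the differential, multiplication and augmentation is automatic, since $R_\infty^{\mathrm{Tr}}$ and $R_\infty^{\GL}$ are both sub-DG-algebras of $R_\infty$. The main obstacle---and the point that genuinely uses the Tate finiteness hypothesis (and fails in the general setting of Theorem~\ref{tS2.3})---is justifying the polynomial-degree-$r$ stability of the invariants, which rests on the $\Z_2$-graded version of the second fundamental theorem of invariant theory.
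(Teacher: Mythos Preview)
Your overall strategy---reduce to each fixed polynomial degree $r$ and show that the tower $\{R_n^{\GL}(r)\}_n$ stabilizes---is the same as the paper's (Section~\ref{3.3}). The divergence is in how the stability is established. The paper does \emph{not} invoke a second fundamental theorem; instead it shows directly that $\bSym\mathrm{Tr}_n(R)_\bullet:\bSym[\mathcal{C}(R)](r)\to R_n^{\GL}(r)$ is injective for all sufficiently large $n$, via the cyclic-derivative machinery built in the Appendix (Lemma~\ref{lSI.3}, which rests on Proposition~\ref{pSI.0.1} and the generic linear-independence Lemma~\ref{lS3.3.3}). This gives only an existential bound $N(r)$, not the explicit $N(r)=r$ you claim.

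Your argument instead asserts a $\Z_2$-graded second fundamental theorem of invariant theory: that all relations among trace invariants in $R_n^{\GL}$ are generated by Cayley--Hamilton-type antisymmetrizations over $n+1$ indices, and hence first appear in polynomial degree $>n$. This is the crux of your proof, but it is nowhere established in the paper---Section~\ref{SI.1} (Lemma~\ref{lS1.3.1}) only covers the \emph{first} fundamental theorem in the graded setting, and the reduction used there (multiplying odd matrix variables by auxiliary odd scalars) does not immediately transfer to the second fundamental theorem, because one must control how relations, not just generators, behave under this substitution. As written, then, your proof has a genuine gap at its central step. If you can supply a proof of the graded second fundamental theorem (which is plausible and would be of independent interest), your route would in fact yield the sharper explicit bound $N(r)=r$; the paper's cyclic-derivative argument is more self-contained but gives no quantitative information about the stability range.
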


\subsection{Proof of Proposition~\ref{pS3.2}} \la{3.3} \subsubsection{}\la{3.3.1}
It suffices to prove that $\bSym \mathrm{Tr}_{\infty}\,:\,\bSym[\mathcal{C}(R)] \rar R_{{\infty}}^{\GL}$
is a surjection in $\bcDGA_k$. For this, it is enough to show that for any $r \in \mathbb N$,
there exists a natural number $N(r)$ such that the natural map $\mu_{n+1,n}\,:\,R_{n+1}^{\GL}(r) \rar R_{n}^{\GL}(r)$ is an isomorphism of $k$-vector spaces
for all $n \geq N(r)$.
Indeed, if this is the case, then $R_{{\infty}}^{\GL}(r) \cong R_{N(r)}^{\GL}(r)$. Since the diagram
$$\begin{diagram}
    &   &  R_{{\infty}}^{\GL}\\
   & \ruTo^{\bSym \mathrm{Tr}_{{\infty}}(R)} & \dTo\\
  \bSym[\mathcal{C}(R)] &\rOnto_{\bSym \mathrm{Tr}_{N(r)}(R)} & R_{N(r)}^{\GL}
  \end{diagram}
  $$
commutes (with the bottom arrow being a surjection by Theorem~\ref{pS1.3.1}),
$\bSym \mathrm{Tr}_{{\infty}}(R)_{\bullet} $ induces a surjection between $\bSym[\mathcal{C}(R)](r)$ and
$R_{{\infty}}^{\GL}(r)$ for each $r$ in $\mathbb N$. Thus, $R_{\infty}^{\mathrm{Tr}}(r) \cong R_{{\infty}}^{\GL}(r)$
for each $r \in \mathbb N$, which implies that $R_{\infty}^{\mathrm{Tr}} \cong R_{{\infty}}^{\GL}$ in $\bcDGA_k$.

\subsubsection{}\la{3.3.2}
Further observe that the claim that there exists an $N(r) \in \N$ such that $\mu_{n+1,n}\,:\,R_{n+1}^{\GL}(r) \rar R_{n}^{\GL}(r)$ is an isomorphism of
 $k$-vector spaces
for $n \geq N(r)$ will follow if we can show that for there exists an $N(r) \in \N$ such that for all $n \geq N(r)$,
\begin{equation} \la{eS3.1}
\bSym \mathrm{Tr}_{n}(R)_{\bullet}\,:\, \bSym[\mathcal{C}(R)](r) \to R_{n}^{\GL}(r)
 \end{equation}
 is an isomorphism of $k$-vector spaces. Since the map~\eqref{eS3.1} is a surjection by Theorem~\ref{pS1.3.1},
it suffices to show that for any fixed $r$, the map~\eqref{eS3.1} is an injection for $n$ sufficiently large.

\subsubsection{}\la{3.3.3} In order to do this we modify the proof of \cite[Proposition 11.1.2]{CEG}. To avoid unnecessarily complicated notation,
we demonstrate our proof for $R=k\langle x, y\rangle$ with the homological degree of $x$ (resp., $y$) being $0$ (resp, $1$)
and the polynomial degree of $x$ and $y$ being $1$ (and $dy=0$). The proof in the general case is completely similar. In this case,
$\mathcal{C}(R)$ is spanned by the collection $S$ of nonempty cyclic words $w$ in $x$ and such that $\mathrm{N}(w)\,\neq\,0$ (see Section~\ref{SI.0}). The elements of $\bSym[\mathcal{C}(R)] $ are precisely
expressions of the form
$$\sum_J c_J. \prod_{w \in S} (w(x,y))^{J(w)}$$
where $J$ runs over some finite collection of $\N \cup \{0\}$-valued functions on $S$ with finite support satisfying $J(w) \leq 1$ if $w$ is of odd parity (recall that $x$ is even and $y$ is odd).

 Let $\mathcal A:=k[u_1,\ldots ,u_r]$ where the elements $u_i$ are of homological degree $1$. Since any pair $X \in \M_n(k),\,\,Y \in \M_n(\mathcal A_1)$ corresponds to an evaluation homomorphism (of graded commutative algebras) from $R_n$ to $\mathcal A$, Lemma~\ref{lSI.3} implies that if $ \alpha \,\in\, \bSym[\mathcal{C}(R)](r) $ satisfies $\bSym \mathrm{Tr}_{n}(R)_{\bullet} (\alpha)=0 $
for all $n \in \mathbb N$, then $ \alpha = 0 $. Hence, ~\eqref{eS3.1} is an injection for $n$ sufficiently large. This completes the proof of Proposition~\ref{pS3.2}.

\subsection{Derived representation functors}
\la{3.4}

The following proposition collects facts about the existence of total left derived functors of various representation
(and related) functors from $\Ho(\bDGA_k)$.
Note that the cyclic functor $\mathcal{C}$ is treated here as a functor from $\bDGA_k$ to
the (model) category of complexes of graded $k$-vector spaces. The analogue of Theorem~\ref{ftt} holds in this more refined setting.

\begin{prop} \la{pS3.1.2}
$(a)$  If $F$ is one of the functors $\,(\mbox{--})_n\,$, $\,(\mbox{--})_n^{\GL}\,$ and $\bSym[\mathcal{C}(\,\mbox{--}\,)] $, then $F$ has a total left derived
functor $$ \L F\,:\,\Ho(\bDGA_k) \rar \Ho(\bcDGA_k)\,,\,\,\,\,\,\,\, A \mapsto F(QA)\ . $$
Moreover, we have $\, \L \bSym[\mathcal{C}(\,\mbox{--}\,)]
\cong \bSym[\L  \mathcal{C}(\,\mbox{--}\,)] \,$.

$(b)$ If $F$ is one of the functors $\,(\mbox{--})_{{\infty}}\,$, $\,(\mbox{--})_{{\infty}}^{\GL} $ and $\mathrm{Tr}_{{\infty}}$, then $F$ has a total left derived
functor $$ \L F\,:\,\Ho(\bDGA_k) \rar \Ho(\bcDGA_k)\,,\,\,\,\,\,\,\, A \mapsto F(QA)\ . $$
\end{prop}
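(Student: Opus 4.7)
The plan is to invoke Brown's Lemma (as in the proofs of Theorems~\ref{nS2.1t2}, \ref{nS2.1t5}, and~\ref{tS2.2}) for each of the six functors, reducing the existence of the total left derived functor to the claim that each sends acyclic cofibrations between cofibrant objects of $\bDGA_k$ to weak equivalences in $\bcDGA_k$. The key preliminary step is to establish a bigraded analog of the M-homotopy machinery of Section~\ref{A.2}. Concretely, for a special acyclic cofibration $i: A \hookrightarrow B := A \ast_k (\amalg_{\lambda \in I} T(V_{x_\lambda}))$ in $\bDGA_k$ (see Lemma~\ref{lA1.2.1}), the formulas in the proof of Lemma~\ref{lA2.1} produce a retraction $p: B \to A$ with $pi = \id_A$ together with an explicit polynomial homotopy $h: B \to B \otimes \Omega$ between $ip$ and $\id_B$, where $\Omega = k[t, dt]$ is placed in polynomial degree $0$; by construction the homotopy preserves the polynomial grading. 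Combined with Lemma~\ref{lA1.2.1}, this yields a bigraded version of Proposition~\ref{pA2.1}: any two homotopic morphisms out of a cofibrant object in $\bDGA_k$ can be connected by a polynomial $\Omega$-homotopy.

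For part (a) the three functors are handled in turn. For $F = (\mbox{--})_n$ and $F = (\mbox{--})_n^{\GL}$, the proofs of Theorems~\ref{nS2.1t2} and~\ref{nS2.1t5} adapt verbatim: since \cite[Lemma~2.5]{BKR} is functorial and local in nature, it transports the polynomial homotopy $h$ to homotopies $h_n$ and $h_n^{\GL}$ showing that $i_n$ and $i_n^{\GL}$ are quasi-isomorphisms (using that $\GL_n(k)$ is reductive for the invariant version). For $F = \bSym[\mathcal{C}(\mbox{--})]$, the bigraded M-homotopy descends immediately to the cyclic functor, producing a quasi-isomorphism of cyclic complexes; since $\mathrm{char}\,k = 0$, the exact functor $\bSym$ preserves quasi-isomorphisms between non-negatively graded complexes, and the statement $\L \bSym[\mathcal{C}(\mbox{--})] \cong \bSym[\L\mathcal{C}(\mbox{--})]$ is then the formal fact that $\bSym$ commutes with derived functors.

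For part (b), inverse limits in $\bcDGA_k$ are computed polynomial-degree-wise, and the new input is the stabilization argument of Sections~\ref{3.3.1}-\ref{3.3.3} together with Proposition~\ref{pS3.2}. For a cofibrant $A \in \bDGA_k$, which by Section~\ref{A.1.1} is a retract of a noncommutative Tate extension of $k$ with finitely many generators in each polynomial degree, and for fixed $r \in \mathbb{N}$, the tower $\{A_n(r)\}_n$ stabilizes for $n \geq N(r)$. Hence for an acyclic cofibration $i: A \to B$ between cofibrant objects, $A_\infty(r)$ and $B_\infty(r)$ coincide with $A_{N(r)}(r)$ and $B_{N(r)}(r)$ respectively (after possibly enlarging $N(r)$), and the assertion that $i_\infty$ and $i_\infty^{\GL}$ are quasi-isomorphisms reduces to the corresponding statements in part (a); in particular no $\varprojlim^1$ obstruction arises. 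For $\mathrm{Tr}_\infty$, the argument of Theorem~\ref{tS2.2}(a) carries over: the naturality of the diagram \eqref{eS2.2} makes the polynomial homotopies $\{h_n^{\GL}\}$ compatible under the stabilization maps, yielding a polynomial homotopy $h_\infty^{\Tr}: B_\infty^{\Tr} \to B_\infty^{\Tr} \otimes \Omega$ between $(ip)_\infty^{\Tr}$ and the identity.

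The main point requiring care is the reduction of an arbitrary cofibrant object in $\bDGA_k$ to a Tate extension with finitely many generators per polynomial degree, since Proposition~\ref{pS3.2} is stated under this finiteness hypothesis. This finiteness can be arranged by choosing the filtration $V^{(0)} \subseteq V^{(1)} \subseteq \dots$ in Section~\ref{A.1.1} so that each $V^{(i)}$ is finite-dimensional in each polynomial degree (which is always possible, since the differential lands in the previous stage); retracts of such Tate extensions inherit the stabilization property, making the inverse-limit arguments go through. Once this is verified, parts (a) and (b) follow by the outline above.
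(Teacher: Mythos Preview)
Your treatment of part~(a) is correct, though more elaborate than necessary. The paper's proof observes that the forgetful functors $\bDGA_k \to \DGA_{k/k}$ and $\bcDGA_k \to \cDGA_k$ preserve both cofibrations and weak equivalences; hence any functor $F:\DGA_{k/k}\to\cDGA_k$ which enriches to $\bDGA_k\to\bcDGA_k$ and already sends acyclic cofibrations between cofibrants to weak equivalences automatically does so in the bigraded setting. This immediately handles $(\,\mbox{--}\,)_n$, $(\,\mbox{--}\,)_n^{\GL}$ and $\bSym[\mathcal C(\,\mbox{--}\,)]$ by citing \cite{BKR}, with no need to redevelop $M$-homotopies bigradedly. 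Your route works too, it just does more.

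Part~(b), however, has a genuine gap. For $(\,\mbox{--}\,)_\infty$ and $(\,\mbox{--}\,)_\infty^{\GL}$ you invoke the stabilization of Proposition~\ref{pS3.2} and Section~\ref{3.3}, which requires the Tate resolution to have \emph{finitely many generators in each polynomial degree}. You then assert that every cofibrant object of $\bDGA_k$ is a retract of such a resolution, and in the last paragraph try to justify this by arranging each $V^{(i)}$ to be finite-dimensional degreewise. This is false in general: nothing in the axioms of $\bDGA_k$ forces $A(r)$ to be finite-dimensional, and if $\dim_k A(r)=\infty$ for some $r$ then any Tate resolution of $A$ must have infinitely many generators in polynomial degree $r$. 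So your reduction to the stabilized case does not go through, and the claim that ``no $\varprojlim{}^{1}$ obstruction arises'' is unsupported.

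The paper avoids this by proceeding exactly as in Theorem~\ref{tS2.2}(b),(c): one shows that for an acyclic cofibration $i:A\to B$ between cofibrants, each $i_n^{\GL}$ (resp.\ $i_n$) is a quasi-isomorphism, that the towers $\{A_n^{\GL}\}$ and $\{B_n^{\GL}\}$ satisfy the Mittag--Leffler condition because the transition maps $\mu_{n+1,n}$ are surjective (Theorem~\ref{pS1.3.1}), and then applies the $\varprojlim{}^{1}$ short exact sequence. This argument needs no finiteness hypothesis whatsoever. Your handling of $(\,\mbox{--}\,)_\infty^{\Tr}$ via the compatible family of homotopies is fine and matches the paper's Theorem~\ref{tS2.2}(a).
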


\begin{proof}
 Note that the forgetful functors  $\bDGA_k \rar \DGA_{k/k}$
and $\bcDGA_k \rar \cDGA_k$ preserve cofibrations as well as weak equivalences. Hence, any functor $F: \DGA_{k/k} \rar \cDGA_k$
that maps acyclic cofibrations between cofibrant objects in $\DGA_k$ to weak equivalences in $\cDGA_k$ and
that enriches to a functor $\bDGA_k \rar \bcDGA_k$
maps acyclic cofibrations between cofibrant objects in $\bDGA_k$ to weak equivalences in $\bcDGA_k$. By Brown's Lemma,
such a functor $F$ has a total left derived functor $$ \L F\,:\,\Ho(\bDGA_k) \rar \Ho(\bcDGA_k)\,,\,\,\,\,\,\,\, A \mapsto F(QA) $$
where $QA$ is any cofibrant resolution of an object $A$ in $\bDGA_k$. By the proofs of 
Theorems 2.6 and 3.1 of~\cite{BKR}, this is the case when
$F$ is one of the functors $(\mbox{--})_n, (\mbox{--})_n^{\GL}\,,\,\,n \in \N$ and $\bSym[\mathcal{C}(\mbox{--})] $. That $\L  \bSym[\mathcal{C}(\mbox{--})]
\cong \bSym[\L  \mathcal{C}(\mbox{--})] $ follows from an easy verification left to the reader. This proves $(a)$.

The proof of part $(b)$ is completely analogous to that of Theorem~\ref{tS2.2}. We leave the details to the interested reader
in order to avoid being repetitive.
\end{proof}
\begin{theorem} \la{tS3.40}
$ \bSym \mathrm{Tr}_{\infty}(\mbox{--})_{\bullet} $ induces an isomorphism of functors from $ \Ho(\bDGA_k) $ to $ \Ho(\bcDGA_k) \,$:
$$
\L  \bSym[\mathcal{C}(\mbox{--})]\, \stackrel{\sim}{\to}\,
\L  (\,\mbox{--}\,)_{\infty}^{\mathrm{Tr}}\ .
$$
\end{theorem}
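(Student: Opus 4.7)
My plan is to follow the same overall strategy used for Theorem~\ref{tS2.3}, adapted to the bigraded setting. First, I would verify that $\bSym\mathrm{Tr}_{\infty}(\mbox{--})_{\bullet}$ descends to a morphism of functors on the homotopy categories. The underlying functorial transformation exists at the level of $\bDGA_k$ already (the same formula as in Section~\ref{trmaps} defines it and naturality is unchanged), and Proposition~\ref{pS3.1.2} guarantees that both source and target admit total left derived functors computed on cofibrant replacements. Hence $\bSym\mathrm{Tr}_{\infty}$ induces a natural map $\L\bSym[\mathcal{C}(\mbox{--})] \to \L(\mbox{--})_{\infty}^{\mathrm{Tr}}$ between derived functors.

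Next, by Brown's Lemma it suffices to prove that for a cofibrant $R \in \bDGA_k$, the morphism $\bSym\mathrm{Tr}_{\infty}(R)_{\bullet}:\bSym[\mathcal{C}(R)]\to R_{\infty}^{\mathrm{Tr}}$ is a quasi-isomorphism in $\bcDGA_k$. Every cofibrant object is a retract of a noncommutative Tate extension $k\to R$ (Section~\ref{A.1.1}), and since $\mathcal C$, $\bSym[\mathcal{C}(\mbox{--})]$ and $(\mbox{--})_{\infty}^{\mathrm{Tr}}$ preserve retracts, I may assume $R$ is itself such a Tate extension. In particular, $R$ is free as a bigraded algebra. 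Because the map is a morphism in $\bcDGA_k$, it suffices to forget the (homological) differential and establish an isomorphism of bigraded commutative algebras -- exactly the reduction performed in Section~\ref{1.3.1}.

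The heart of the argument is then a bigraded analogue of Proposition~\ref{pS4.3}. I would first reduce to $R=T_k W$ with $W$ a bigraded vector space that is finitely generated in each polynomial degree, by writing an arbitrary free bigraded algebra as a direct limit of such subalgebras (the functor $(\mbox{--})_n$ commutes with directed colimits, and the required limits/colimits behave well \emph{polynomial-degree by polynomial-degree} in $\bcDGA_k$, which is precisely the advantage of working in the bigraded category). For such finitely generated $R$, Proposition~\ref{pS3.2} is decisive: it gives $R_{\infty}^{\mathrm{Tr}}\cong R_{\infty}^{\GL}$ in $\bcDGA_k$, so I need only prove the map onto the full invariant ring is an isomorphism. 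This I would do by equipping $R_{\infty}^{\mathrm{Tr}}$ with the canonical commutative cocommutative (bigraded) Hopf algebra structure of Section~\ref{S2.1.2}, then combining the bigraded analogues of the two key lemmas: Lemma~\ref{lS4.3.1}, extracted via the Adams operation $\psi^2=\mu\circ\Delta$, shows that $\mathrm{Tr}_{\infty}(R)_{\bullet}$ surjects onto the primitives; Lemma~\ref{lS4.3.2}, proved via the matrix trace injectivity of Corollary~\ref{cSI.1} (which is insensitive to the polynomial grading), shows it is injective on $\mathcal C(R)$. The Milnor--Moore structure theorem for commutative cocommutative Hopf algebras over $k$ then upgrades these two facts into the sought isomorphism $\bSym[\mathcal C(R)]\stackrel{\sim}{\to} R_\infty^{\GL}=R_\infty^{\mathrm{Tr}}$.

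The main obstacle is bookkeeping around the polynomial grading: because the forgetful functor $\bcDGA_k\to\cDGA_k$ does not commute with the inverse limit defining $(\mbox{--})_{\infty}$, the bigraded $R_{\infty}^{\mathrm{Tr}}$ is a genuinely different object from its ungraded counterpart, so the proof cannot simply be cited from Theorem~\ref{tS2.3}. The role of Proposition~\ref{pS3.2}, which collapses the stabilization on each polynomial-graded piece after finitely many steps, is what resolves this and makes the Milnor--Moore argument applicable in the bigraded setting.
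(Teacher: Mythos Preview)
Your proposal is correct and follows essentially the same route as the paper, which simply asserts that the proof of Theorem~\ref{tS2.3} goes through in the bigraded setting.  You have correctly identified all the ingredients: reduction to free (Tate) $R$, the Hopf algebra structure on $R_\infty^{\mathrm{Tr}}$, Lemma~\ref{lS4.3.1} via the Adams operation, Lemma~\ref{lS4.3.2} via Corollary~\ref{cSI.1}, and Milnor--Moore.

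Two small remarks.  First, your invocation of Proposition~\ref{pS3.2} is unnecessary for this theorem: the argument of Proposition~\ref{pS4.3} works directly with $R_\infty^{\mathrm{Tr}}$ and never needs to identify it with $R_\infty^{\GL}$.  (That identification is what the paper uses afterwards, in Theorem~\ref{tS3.4}.)  Second, your worry in the last paragraph is a red herring here.  While it is true that the bigraded inverse limit $R_\infty$ differs from the ungraded one, the trace subalgebra $R_\infty^{\mathrm{Tr}}$ is by definition the \emph{image} of the polynomial-graded map $\bSym\mathrm{Tr}_\infty$, and every element of $\bSym[\mathcal{C}(R)]$ has bounded polynomial degree; hence $R_\infty^{\mathrm{Tr}}$ is literally the same object in both settings, and Proposition~\ref{pS4.3} applies verbatim.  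The subtlety you flag is genuine for $R_\infty^{\GL}$ but not for $R_\infty^{\mathrm{Tr}}$, which is why the paper can dispose of Theorem~\ref{tS3.40} in one line while Theorem~\ref{tS3.4} requires the additional input of Proposition~\ref{pS3.2}.
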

\begin{proof}
The proof of Theorem~\ref{tS2.3} goes through in the bigraded setting.
\end{proof}

\begin{theorem}
\la{tS3.4}
Suppose that $A \in \bDGA_{k/k}$ has a noncommutative Tate resolution with finitely many generators in each polynomial degree. Then
\begin{enumerate}
\item[$(a)$]
$\L  (A)_{\infty}^{\mathrm{Tr}} \cong \L  (A)_{{\infty}}^{\GL}\,$   in $\,\Ho(\bcDGA_k)\,$.
Consequently, $\, \bSym[\overline{\mathrm{HC}}_{\bullet}(A)] \cong \H_\bullet(A, \infty)^{\GL} $.
\item[$(b)$] For any $r \in \mathbb N$, there exists a number $N(r) \in \mathbb N$ such that for any $n \geq N(r)$,
$$ \bSym \mathrm{Tr}_{n}(A)_{\bullet}\,:\, \bSym[\overline{\mathrm{HC}}_{\bullet}(A)](s) \rar \mathrm{H}_{\bullet}(A,n)^{\GL}(s)$$
is an isomorphism for $s \leq r$. Hence, if $\,n \geq N(r)\,$ then
$$\mathrm{H}_{\bullet}(A,n)^{\GL}(s) \cong \mathrm{H}_{\bullet}(A,\infty)^{\GL}(s)
\ , \ s \leq r \,.
 $$
\end{enumerate}
\end{theorem}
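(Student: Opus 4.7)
The plan is to combine the bigraded stabilization theorem (Theorem~\ref{tS3.40}) with the $\mathrm{Tr} = \GL$ identification (Proposition~\ref{pS3.2}), and to extract the polynomial-degree-wise stabilization in $(b)$ directly from the proof of the latter. The present theorem should therefore be largely a matter of packaging the earlier results.

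For part $(a)$, I would first fix a noncommutative Tate resolution $QA \sonto A$ in $\bDGA_k$ with finitely many generators in each polynomial degree; such a resolution exists by hypothesis together with Lemma~\ref{lA1.1.1}$(i)$, and it is cofibrant as noted in Section~\ref{A.1.1}. By Proposition~\ref{pS3.1.2}$(b)$, both derived functors may be evaluated at such a cofibrant replacement, giving $\L(A)_\infty^{\mathrm{Tr}} \cong (QA)_\infty^{\mathrm{Tr}}$ and $\L(A)_\infty^{\GL} \cong (QA)_\infty^{\GL}$ in $\Ho(\bcDGA_k)$. Proposition~\ref{pS3.2} applied to $QA$ then yields an isomorphism $(QA)_\infty^{\mathrm{Tr}} \cong (QA)_\infty^{\GL}$ in $\bcDGA_k$, establishing the first claim. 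Combining this with Theorem~\ref{tS3.40} and the bigraded analogue of Theorem~\ref{ftt} (whose validity is noted in Section~\ref{3.4}) will give $\bSym[\rHC_\bullet(A)] \cong \H_\bullet(A,\infty)^{\mathrm{Tr}} \cong \H_\bullet(A,\infty)^{\GL}$.

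For part $(b)$, the plan is to revisit the argument of Sections~\ref{3.3.2}--\ref{3.3.3}. What is actually proved there, not merely surjectivity, is the stronger statement that for $R = QA$ one can choose $N(r) \in \mathbb{N}$ (which after replacing $N(r)$ by $\max_{s \leq r} N(s)$ may be taken monotone non-decreasing in $r$) such that the chain map $\bSym \mathrm{Tr}_n(R)_\bullet\colon \bSym[\mathcal{C}(R)](r) \to R_n^{\GL}(r)$ is an \emph{isomorphism of complexes of $k$-vector spaces} for every $n \geq N(r)$. By monotonicity of $N$, whenever $n \geq N(r)$ this chain-level isomorphism holds simultaneously at every polynomial degree $s \leq r$. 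Passing to homology in each such polynomial degree, and invoking the bigraded analogue of Theorem~\ref{ftt} once more, will yield $\bSym[\rHC_\bullet(A)](s) \xrightarrow{\sim} \H_\bullet(A,n)^{\GL}(s)$ for all $s \leq r$ and $n \geq N(r)$. The final stability assertion $\H_\bullet(A,n)^{\GL}(s) \cong \H_\bullet(A,\infty)^{\GL}(s)$ then follows by combining this with part $(a)$.

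The one point that really needs to be verified is the chain-level isomorphism used in $(b)$, namely that injectivity (not only surjectivity) of $\bSym \mathrm{Tr}_n(R)_\bullet$ at polynomial degree $r$ holds for $n$ sufficiently large. This is the main step, but the work has essentially already been done inside the proof of Proposition~\ref{pS3.2}, via the evaluation-at-matrices trick using $X \in \M_n(k)$, $Y \in \M_n(\mathcal{A}_1)$ together with Lemma~\ref{lSI.3}; the only new observation required is that the existing stabilization argument is already polynomial-degree-wise and can be run uniformly across all $s \leq r$.
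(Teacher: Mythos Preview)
Your proposal is correct and follows essentially the same approach as the paper's own proof: part $(a)$ is deduced from Proposition~\ref{pS3.2} (together with Theorem~\ref{tS3.40} for the cyclic-homology identification), and part $(b)$ is obtained by reading off from Sections~\ref{3.3.2}--\ref{3.3.3} that the chain-level map \eqref{eS3.1} is an isomorphism for $n \gg 0$ and then passing to homology. Your write-up is simply more explicit about the logical dependencies (e.g., evaluating the derived functors on the Tate resolution via Proposition~\ref{pS3.1.2}, and the monotonicity trick for $N(r)$), but there is no substantive difference in method.
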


\begin{proof}
Part $(a)$ is immediate from Proposition~\ref{pS3.2}. Moreover, for $R$ a noncommutative Tate resolution of $A$ (with finitely many generators
in each polynomial degree), we showed in Section~\ref{3.3}
that the map~\eqref{eS3.1} is an isomorphism for $n \gg 0 $. At the level of homology this implies $(b)$.
\end{proof}

\subsubsection{Remarks}
\la{3.4.1}
1. Part $(b)$ of Theorem~\ref{tS3.4} is a generalization of \cite[Proposition 11.1]{CEG}, which
proves stabilization for the family of ordinary representation schemes $ \Rep_n(A) $ of a graded algebra $A$.

2. The assumption of Theorem~\ref{tS3.4} is far from vacuous. Indeed, following the proof of Proposition 3.1 of~\cite{FHT}, it can be shown that the quotient of a finitely generated free (polynomially graded) algebra generated by elements of positive polynomial degree by a homogeneous ideal whose generators are of positive polynomial degree has the required kind of noncommutative Tate resolution.

3. Theorem~\ref{tS3.4} is analogous to Theorem~6.9 of \cite{LQ}. However, we are constrained to work with
a specific subcategory of the category of bigraded DGAs for this. Also, unlike \cite[Theorem~6.9]{LQ}, two natural questions remain open in this setting:
we do not yet have a formula expressing $N(r)$ in terms of $r$ and we do not yet have an explicit description of the first obstruction to stability.
One reason for this is the lack of an explicit `small' complex intrinsic to $A$ for computing representation homology of $A$.

4. In the case of augmented algebras, $ R_\infty^\Tr $ is only a proper (dense)
DG subalgebra of $\,R_{{\infty}}^{\GL}\,$: i.e., $ R_\infty^\Tr  \not=  R_{{\infty}}^{\GL} $. However, Proposition~\ref{pS3.2} shows that the functor $\L  (\mbox{--})_{\infty}^{\mathrm{Tr}}\,:\,\Ho(\DGA_{k/k}) \rar \Ho(\cDGA_k)$ may be viewed as a natural analogue of $\,\L (\mbox{--})_{{\infty}}^{\GL}\,:\,\Ho(\bDGA_k) \rar \Ho(\bcDGA_k)$ in that case.

\subsection{Euler characteristics and combinatorial identities}
\la{comb}
In this section, we compute the Euler characteristic of the $\GL$-invariant part of representation homology $ \H_\bullet(A, n)^\GL $ as $ n \to \infty $. Comparing the result of this computation to cyclic homology of $A$, we get (as a consequence of Theorem~\ref{tS3.4}) some interesting combinatorial identities. For illustration, we consider two families of finite-dimensional
nilpotent algebras $\, A = \c[x]/(x^{m+1})\,$, $\, m\ge 1 \,$, and
$\,A= \c[x_1, x_2, \ldots, x_d]/(x_1, x_2, \ldots, x_d)^2\,$, $\, d \ge 1 \,$. As mentioned in the Introduction, our choice of examples is motivated by the Lie homological interpretation of the famous
Macdonald conjectures (see \cite{Ha, FGT}).

Let $ A $ be a bigraded DG algebra satisfying the assumption of Theorem~\ref{tS3.4}.
If $ R $ is a Tate resolution of $A$ in $ \bDGA_{k/k} $, the underlying graded algebra
of $ R $ can be written as $ T_k V $, where $\, V = V_{\rm ev} \oplus V_{\rm odd} \,$
is a $k$-vector superspace, each component of which is polynomially graded:
$$
V_{\rm ev} = \bigoplus_{i \ge 0} V_{\rm ev}(i)\quad ,\quad
V_{\rm odd} = \bigoplus_{i \ge 0} V_{\rm odd}(i)\ .
$$
By our assumption, the dimensions of $  V_{\rm ev}(i) $ and $  V_{\rm odd}(i) $
are finite for all $ i \,$; we set
\begin{equation}
\la{dcoef}
d_i := \dim_k[V_{\rm ev}(i)]\,-\, \dim_k[V_{\rm odd}(i)]\ ,\quad i \ge 0 \ .
\end{equation}
Now, fix $ n \in \N $ and let
$$
\mathbb{V}_{{\rm ev}, n} :=  V_{\rm ev} \otimes \M_n(k)\quad ,\quad
\mathbb{V}_{{\rm odd}, n} :=  V_{\rm odd} \otimes \M_n(k)\ .
$$
Note that $\, \dim_k[\mathbb{V}_{{\rm ev}, n}(i)] -
\dim_k[\mathbb{V}_{{\rm odd}, n}(i)] =  d_i \, n^2 \,$.
By Theorem~\ref{comp}, the DG algebra $ R_n $ representing $ \DRep_n(A) $
is isomorphic (as a graded algebra) to the symmetric algebra of
$ \mathbb{V}_n = \mathbb{V}_{{\rm ev}, n} \oplus \mathbb{V}_{{\rm odd}, n}\,$:
$$
R_n \cong \bSym(\mathbb{V}_{n}) =
\Sym_k(\mathbb{V}_{{\rm ev}, n}) \otimes \Lambda_k(\mathbb{V}_{{\rm odd}, n})\ .
$$
Hence, the (graded) Euler characteristic of representation homology
$$
\chi(\H_\bullet(A,n), q) := \sum_{i, s = 0}^{\infty}\, (-1)^i \, q^s \dim_k[\H_i(A,n)(s)]
$$
is given by the formula ({\it cf.} \cite[2.1.2]{GS})
\begin{equation}
\la{eul}
\chi(\H_\bullet(A,n), q) = \prod_{i = 1}^{\infty}\, (1 - q^i)^{-d_i n^2}\ .
\end{equation}
Now, let us assume that $ k = \c $. Then, we can write the Euler characteristic of the $ \GL_n$-invariant part of representation homology using the classical Molien-Weyl formula (see \cite{W1}):
\begin{equation}
\la{eugl}
\chi(\H_\bullet(A,n)^\GL, \,q) = \int_{U(n)}\
\prod_{i = 1}^{\infty}\,\frac{d \mu(g)}{\det(I - \Ad(g)\,q^{i})^{d_i}}\ .
\end{equation}
The integral in \eqref{eugl} is taken over the unitary group $ U(n) \subset \GL_n(\c) $ on which the Haar measure $ d \mu $ is normalized so that the volume of $\, U(n) $ equals $1$; the determinant is calculated in the adjoint representation of $ \GL_n(\c) $ on $ \M_n(\c) $. Both
\eqref{eul} and \eqref{eugl} are regarded as formal power series in $ \c[[q]] $.

The following observation is due to Etingof and Ginzburg \cite{EG}, who proved it in the special case of Koszul complexes. The proof of \cite{EG} works, {\it mutatis mutandis}, for
an arbitrary free bigraded algebra.
\blemma[{\it cf.} \cite{EG}, Corollary~2.4.5]
\la{EGL}
Each coefficient of the formal power series \eqref{eugl} stabilizes
as $ n \to \infty $. Let
$$
\zeta(A, q) := \lim_{n \to \infty} \, \int_{U(n)}\
\prod_{i = 1}^{\infty} \,\frac{d \mu(g)}{\det(I - \Ad(g)\,q^{i})^{d_i}}
$$
denote the corresponding generating function in $ \c[[q]] $. Then
\begin{equation}
\la{zeta}
\zeta(A, q) = \prod_{s = 1}^{\infty}\,
\biggl(1 - \sum_{i = 1}^{\infty}\, d_i \,q^{si} \biggr)^{-1} \ ,
\end{equation}
where $ d_i $ are defined in \eqref{dcoef}.
\elemma

Next, we look at the (reduced) cyclic homology of $A$. If $A$ is bigraded, then so is $\,
\rHC_\bullet(A) \,$. Assume that the dimensions of the graded components of $ \rHC_{\rm ev}(A) $ and $ \rHC_{\rm odd}(A) $ are $ \{a_i\}_{i \ge 0} $ and $ \{b_i\}_{ i \ge 0} $ respectively. Then
\begin{equation}\la{euhc}
\chi(\bSym[\rHC_\bullet(A)],\, q) = \prod_{i = 1}^{\infty}\, (1 - q^i)^{b_i - a_i}
\end{equation}
As consequence of Theorem~\ref{tS3.4} and Lemma~\ref{EGL}, we thus get
\begin{corollary}\la{combin}
Let $A$ be a bigraded DG algebra over $ \c $ satisfing the assumption of Theorem~\ref{tS3.4}. Then
\begin{equation}
\la{cid}
\chi(\bSym[\rHC_\bullet(A)], q) = \zeta(A, q)\ .
\end{equation}
%
%
%
\end{corollary}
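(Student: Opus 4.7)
The plan is to read off the identity as an immediate comparison of two descriptions of the same generating series, namely the Euler characteristic of the stable $\GL$-invariant representation homology computed in two different ways.

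First I would fix a polynomial degree $s \ge 0$ and appeal to Theorem~\ref{tS3.4}(b): there exists $N(s) \in \N$ such that for every $n \ge N(s)$ one has an isomorphism of bigraded $\c$-vector spaces
$$
\bSym \mathrm{Tr}_n(A)_\bullet\,:\,\bSym[\rHC_\bullet(A)](s)\ \stackrel{\sim}{\to}\ \H_\bullet(A,n)^{\GL}(s).
$$
Because this is a graded isomorphism in homological degree, the degree-$s$ coefficient of the Euler characteristic satisfies
$$
[q^s]\,\chi(\H_\bullet(A,n)^{\GL}, q)\ =\ [q^s]\,\chi(\bSym[\rHC_\bullet(A)], q)
\quad\text{for all } n \ge N(s).
$$
In particular, each coefficient of $\chi(\H_\bullet(A,n)^{\GL}, q)$ is eventually constant in $n$, and its limit as $n \to \infty$ equals the corresponding coefficient of $\chi(\bSym[\rHC_\bullet(A)], q)$.

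Next, I invoke Lemma~\ref{EGL}: the Molien-Weyl expression \eqref{eugl} for $\chi(\H_\bullet(A,n)^{\GL}, q)$ also has well-defined coefficient-wise limit as $n \to \infty$, and that limit is the explicit power series $\zeta(A, q)$ given by \eqref{zeta}. Since two formal power series in $\c[[q]]$ that agree in every coefficient must coincide, combining the two computations of this limit yields
$$
\chi(\bSym[\rHC_\bullet(A)], q)\ =\ \zeta(A, q),
$$
which is the claimed identity \eqref{cid}.

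There is no real obstacle here, only bookkeeping: the one point worth double-checking is that the isomorphism of Theorem~\ref{tS3.4}(b) is compatible with both the homological grading (used to define the signs in $\chi$) and the polynomial grading (used to read off the coefficient of $q^s$), so that the equality of Euler characteristics does indeed hold coefficient by coefficient. Both compatibilities follow from the construction of $\bSym \mathrm{Tr}_n(A)_\bullet$ as a morphism in $\bcDGA_\c$ and from the fact that $\bSym$ and $\rHC_\bullet$ are computed in the bigraded setting of Section~\ref{3.2}.
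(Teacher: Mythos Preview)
Your proof is correct and follows exactly the route the paper intends: the corollary is stated in the paper simply as a consequence of Theorem~\ref{tS3.4} and Lemma~\ref{EGL}, and your argument spells out precisely how these combine---Theorem~\ref{tS3.4}(b) identifies each $q^s$-coefficient of $\chi(\H_\bullet(A,n)^{\GL}, q)$ with that of $\chi(\bSym[\rHC_\bullet(A)], q)$ for $n$ large, while Lemma~\ref{EGL} identifies the same coefficientwise limit with $\zeta(A,q)$.
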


We conclude with simple examples showing that \eqref{cid} actually yields interesting
combinatorial identities even for ordinary finite-dimensional algebras.

\subsubsection{Examples}
$(a)\ $ Let $ A = \c[x]/(x^2) $ be the ring of dual numbers. We equip $A$ with the usual polynomial grading letting $ \deg(x) = 1 $. The cyclic homology of $A$ is given by (see \cite[4.3]{LQ})
\begin{equation*}
\rHC_i(A) =
\left\{
\begin{array}{lll}
0 \ & \mbox{\rm if} &\ i = 2j+1 \\*[1ex]
\c\, x^{\otimes (2j+1)}\ & \mbox{\rm if} &\ i = 2j
\end{array}
\right.
\end{equation*}
Hence,
$$
\chi(\bSym[\rHC_\bullet(A)], q) = \prod_{j = 0}^{\infty} \,(1 - q^{2j+1})^{-1} \ .
$$
On the other hand, $A$ has a free resolution
$ R = \c\langle x, x_1, x_2, x_3, \ldots \rangle $
with differential \eqref{dualnum}. The generators $ x_i $ of $ R $ have homological degree
$ i $ and polynomial degree $ i+1 $, so that $ d_i = (-1)^{i+1} $ for each $ i \ge  1 $. Hence,
$$
\zeta(A,q) = \prod_{s = 1}^{\infty}\,
\biggl(1 - \sum_{i = 1}^{\infty}\, (-1)^{i+1}\,q^{si} \biggr)^{-1} =
 \prod_{s = 1}^{\infty}\,(1 + q^s)\ .
$$
Thus, in this case, \eqref{cid} becomes
\begin{equation}
\la{cid1}
\prod_{j = 0}^{\infty} \,(1 - q^{2j+1})^{-1}\,=\, \prod_{s = 1}^{\infty}\,(1 + q^s)\ ,
\end{equation}
which is a well-known combinatorial identity equating the generating
functions for partitions into odd summands (the left-hand side) and
distinct summands (the right-hand side).

$(b)$ Let $ A = \c[x]/(x^{m+1}) $, where $ m \ge 1 $.
The (graded) cyclic homology of $ A $ is computed in
\cite[Theorem~3.1]{H}. The corresponding Euler characteristic is given by
$$
\chi(\bSym[\rHC_\bullet(A)], q) = \prod_{{n \ge 1 \atop\, (m+1) \,\nmid\, n}}
(1 - q^n)^{-1} \ .
$$
Since $A$ is an algebra with `monomial' relations, to construct its free resolution $ R $ one can use the Gr\"obner basis algorithm developed in \cite{DK}. The corresponding $\zeta$-function has the following combinatorial description. For $ l, m \in \N \,$, define an {\it $m$-train of length $l$} to be an increasing sequence of natural numbers
$$
\tau = \{n_1 = 1 < n_2 < n_3 < \ldots < n_l\}
$$
such that $\,n_{j+1} - n_{j} < m+1 \,$ for all $\,j\,$. Call
$ w_{\tau} := m + n_l $ the {\it weight} of $ \tau $ and
write $ T(l,m) $ for the set of all $m$-trains of length $l$.
Then
\begin{equation}
\la{zeta1}
\zeta(A, q) = \prod_{s = 1}^{\infty}\,\biggl(1 - q^s +
\sum_{l = 1}^{\infty} \ \sum_{\tau \in T(l,m)} (-1)^{l-1} \, q^{s \, w_\tau}
\biggr)^{-1}\ .
\end{equation}
Notice that each $ \tau \in T(l,m) $ corresponds to a sequence
$\,\{\alpha_1,\,\alpha_2,\,\ldots ,\,\alpha_l\} \,$ of positive integers $\, < m+1 \,$,
such that $\, w_{\tau} = \sum_{i=1}^l \alpha_i + m + 1\,$. (Indeed, put
$\, \alpha_i := n_{i+1} - n_i $). Hence,
$$
\sum_{\tau \in T(l,m)} \,q^{w_\tau} = q^{m+1}(q + q^2 + \ldots + q^m)^{l-1}\ .
$$
Substituting this into \eqref{zeta1}, we get
\begin{eqnarray*}
\zeta(A, q) &=&
\prod_{s = 1}^{\infty}\,
\biggl(1 - q^s +  q^{(m+1)s} \sum_{l = 1}^{\infty} \ (-1)^{l-1}(q^s + q^{2s} + \ldots + q^{ms})^{l-1}
\biggr)^{-1} \\
&=&
\prod_{s = 1}^{\infty}\, (1 - q^s + q^{(m+1)s}/(1 + q^s + \ldots + q^{ms}))^{-1} \\
&=&
\prod_{s = 1}^{\infty}\, (1 + q^s + q^{2s}+ \ldots + q^{ms}) \ .
\end{eqnarray*}
Thus, for  $ A = \c[x]/(x^{m+1}) $, equation \eqref{cid} yields the identity
\begin{equation}
\la{cid2}
\prod_{{n \ge 1 \atop\, (m+1) \,\nmid\, n}}
(1 - q^n)^{-1} \ =\
\prod_{s = 1}^{\infty}\, (1+ q^s+ q^{2s}+ \ldots +q^{ms})\ ,
\end{equation}
which obviously specializes to \eqref{cid1} when $ m = 1 $.

$(c)$ Let $\, A = \c[x_1, x_2, \ldots, x_d]/(x_1, x_2, \ldots, x_d)^2\,$, $\, d \ge 1 $.
By \cite[Proposition~2.2.14]{L}, we have
$$
\rHC_\bullet(A) = \overline{T(V[1])}_\n [-1] \  ,
$$
where $\, V = \oplus_{i=1}^d\, k x_i \,$. For a basis in $ \overline{T(V[1])}_\n $, we can
take a set $ S $ of ``good cyclic words'' in letters $\, \{s x_1, \ldots, s x_d\} \,$
(see Appendix~\ref{SI}, Lemma~\ref{lSI.0.1} below). Let $\, c_{r_1, r_2, \ldots, r_d} \,$
denote the number of good cyclic words with $ r_i $ occurences of $ s x_i $ for
$\, i = 1,2,\ldots, d\,$. Then,
$$
\chi(\bSym[\rHC_\bullet(A)], q_1, \ldots, q_d) =
\prod_{r_1, r_2, \ldots, r_d \ge 1}
(1 \,-\, q_1^{r_1}\, q_2^{r_2} \,\ldots\, q_d^{r_d})^{(-1)^{\sum\,r_i}\, c_{r_1, r_2, \ldots, r_d}}
$$
On the other hand, the (multigraded) $\zeta$-function of $ A $ is given by
$$
\zeta(A,q_1, \ldots, q_d) = \prod_{s=1}^\infty(1 + q_1^s + \ldots + q_d^s)\ .
$$
Thus, \eqref{cid} yields the identity
\begin{equation}\la{cidd}
\prod_{r_1, r_2, \ldots, r_d \ge 1}
(1 \,-\, q_1^{r_1} \,q_2^{r_2}\, \ldots \, q_d^{r_d})^{(-1)^{\sum\,r_i}\, c_{r_1, r_2, \ldots, r_d}}
= \prod_{s=1}^\infty(1+ q_1^s + \ldots + q_d^s) \ .
\end{equation}
Note that when $ q_1 = q_2 = \ldots = q_d = q $, we get
\begin{equation*}
\prod_{r = 1}^\infty (1 - q^{r})^{(-1)^r\, c_r} = \prod_{s=1}^\infty(1+ d\,q^s) \ ,
\end{equation*}
where $ c_r := \dim_k\, V[1]^{\otimes r}\, \cap \,T(V[1])_\n\,$. The left-hand side of
this last identity can be expressed in combinatorial terms:
\begin{equation}
\la{cidd1}
\prod_{r=1}^\infty (1 - q^r)^{(-1)^r \Phi_r(d)}
\prod_{j,k \geq 1} (1 - q^{2k(2j-1)})^{-M_{2j-1}(d)} = \prod_{s=1}^\infty(1+ d\,q^s) \ ,
\end{equation}
where $ \Phi_r(d) $ and $ M_r(d) $ are the numbers of cyclic (resp.,
primitive\footnote{Recall that a cyclic word $ u $ is primitive if it is not of the form $ z^n $
for some cyclic word $z$.} cyclic) words of length $r$ in $\{sx_1, sx_2, \ldots, sx_d \} $. The generating functions for these numbers are known ({\it cf.} \cite[Section~3.3]{GS}):
\begin{equation}
\la{pol}
\sum_{r \ge 1} \Phi_r(d)\,t^r = -\sum_{m=1}^{\infty}\, \frac{\phi(m)}{m}\, \log(1 - d t^m)\ ,
\end{equation}
\begin{equation}
\la{mob}
\sum_{r\ge 1} M_r(d)\, t^r = -\sum_{n=1}^{\infty}\, \frac{\mu(n)}{n}\,\log(1-dt^n)\ ,
\end{equation}
where $ \phi(m) $ is the Euler $ \phi$-function and $ \mu(n) $ is the M\"obius function.
Equation~\eqref{pol} is sometimes called the Polyakov formula, while \eqref{mob} is a classical
identity due to E.~Witt (1937).

%
%

\appendix
\section{(Super) Cyclic Derivatives}
\la{SI}

In this appendix, we extend some combinatorial results about cyclic derivatives on noncommutative algebras (see \cite{RSS, K, Vo}) to the graded and $\Z_2$-graded setting. Although the proofs of these results are not very hard and could in principle be included as part of the proofs our main theorems, we decided to present them separately. We feel that graded cyclic derivatives may be useful to others working
on questions related to combinatorics and invariant theory in the $\Z$- and $\Z_2$-graded setting.

\subsubsection{} \la{SI.0}

Consider the graded algebra $\A := k\langle x, y_1,\ldots,y_r \rangle$, where the generators $x,y_1,\ldots,y_r$ are either in homological degree $0$ or in homological degree $1$.
Recall that the term {\it word} simply refers to a (noncommutative) monomial in the variables $x,y_1,\ldots,y_r$. In turn, the variables $x,y_1,\ldots,y_r$ are referred to as {\it alphabets}.
Note that $\A$ is in fact, bigraded (with $x$ and $y_1,\ldots,y_r$ each having polynomial degree $1$).
Generalizing~\cite{RSS} to the (homologically) graded setting, we define the cyclic derivative $\frac{\partial w}{\partial x}$ of a word $w := v_1\ldots v_n$ with respect to $x$ by setting
$$\frac{\partial w}{\partial x} := \sum_{v_k =x} {(-1)}^{|v_1\ldots v_{k-1}||v_k\ldots v_n|} v_{k+1}\ldots v_n v_1\ldots v_{k-1} \,\mathrm{.}$$
The cyclic derivative with respect to $x$ gives us a $k$-linear operator $\frac{\partial}{\partial x}\,:\,\A \rar \A$.

More conceptually, equip $\A \otimes_k \A$ with the structure of a $\A-\A$ bimodule where $a(b \otimes c):=ab \otimes c,\,\,
(b \otimes c).d:=b \otimes cd$ for all $a,b,c,d \,\in\,\A$. Let $\delta_x\,:\,\A \rar \A \otimes_k \A$ be the derivation
(of degree $|x|$ with respect to the above bimodule structure) such that $\delta_x(x)=1 \otimes 1$ and $\delta_x(y_i)=0$ for $1 \leq i \leq r$.
If $\tilde{\mu}\,:\, \A \otimes \A \rar \A$ is the map $a \otimes b \mapsto (-1)^{|a||b|} ba$, then
$$ \frac{\partial}{\partial x} \,=\, \tilde{\mu} \circ \delta_x \text{.}$$
In this appendix, we  prove the following graded analog of Proposition 6.1 of~\cite{RSS}.

\begin{prop} \la{pSI.0.1}
$\Ker(\frac{\partial}{\partial x})\,=\, k\langle y_1,\ldots ,y_r \rangle +[\A,\A] \,$.
\end{prop}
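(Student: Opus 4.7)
The plan is to prove the two inclusions separately. For $\supseteq$, polynomials in $y_1,\ldots,y_r$ alone visibly lie in $\Ker(\partial/\partial x)$, so the task reduces to checking that $\partial/\partial x$ annihilates every graded commutator $uv - (-1)^{|u||v|} vu$. I would verify this by a direct sign computation: for each occurrence of $x$ inside $u$ (write $u = u_1 x u_2$) the contribution to $\partial(uv)/\partial x$ is a Koszul-signed copy of $u_2 v u_1$, and the matching contribution to $\partial(vu)/\partial x$ is the same monomial with sign differing by exactly $(-1)^{|u||v|}$; an identical analysis handles $x$'s occurring inside $v$. Hence $\partial(vu)/\partial x = (-1)^{|u||v|}\,\partial(uv)/\partial x$, so $\partial/\partial x$ descends to a well-defined $k$-linear map $\bar\partial\colon \A_{\natural} \to \A$, where $\A_{\natural} := \A/[\A,\A]$.

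For the hard inclusion $\Ker(\partial/\partial x) \subseteq k\langle y_1,\ldots,y_r\rangle + [\A,\A]$, the strategy is to construct an ``almost inverse'' of $\bar\partial$. Let $M_x\colon \A \to \A$ denote left multiplication by $x$, and let $\pi\colon \A \to \A_{\natural}$ be the projection. The heart of the argument is the identity
\begin{equation*}
(\pi \circ M_x \circ \bar\partial)([w]) \;=\; k\,[w],
\end{equation*}
valid for every monomial $w$ containing exactly $k \geq 1$ copies of $x$. The verification is short: the $k$ summands comprising $M_x(\partial w/\partial x)$ are precisely the $k$ cyclic rotations $\sigma^{i-1}(w)$ of $w$ that bring an $x$ into the leading position, each weighted with the Koszul sign $\varepsilon_i = (-1)^{|v_1\cdots v_{i-1}|\,|v_i\cdots v_n|}$ arising from the definition of $\partial/\partial x$; modulo $[\A,\A]$ the class of $\sigma^{i-1}(w)$ equals $\varepsilon_i[w]$ by the graded commutator relation, so the two signs square to $+1$ and the $k$ copies of $[w]$ simply add.

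To finish, I decompose any $F \in \Ker(\partial/\partial x)$ according to the number of $x$'s in each monomial: $F = \sum_{k \geq 0} F_k$ with $F_k$ containing exactly $k$ factors of $x$. Since $\partial/\partial x$ respects this grading (lowering it by one), each $F_k$ is separately in the kernel. Clearly $F_0 \in k\langle y_1,\ldots,y_r\rangle$, while for $k \geq 1$ the identity above forces $k\,\pi(F_k) = 0$ in $\A_{\natural}$; as $\mathrm{char}\,k = 0$ this gives $\pi(F_k) = 0$, i.e.\ $F_k \in [\A,\A]$. Summing these contributions yields the required decomposition.

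The only nontrivial step is the identity $\pi M_x \bar\partial = k\cdot\mathrm{id}$ on the $k$-fold $x$-isotypic piece of $\A_{\natural}$; the rest is formal bookkeeping and sign tracking. The operator $M_x$ appears naturally here, because $\partial/\partial x$ morally ``opens'' a cyclic word at every occurrence of $x$, and $M_x$ closes it back up at the same spot.
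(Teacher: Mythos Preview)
Your argument is correct and takes a genuinely different route from the paper's. The paper factors $\partial/\partial x = T_x \circ N$, where $N = 1+\tau+\cdots+\tau^{n-1}$ is the cyclic norm and $T_x$ strips a leading $x$; it then devotes a separate lemma (Lemma~\ref{lSI.0.1}) to showing $\Ker(N)=[\A,\A]$ by a combinatorial analysis of ``good'' versus ``bad'' cyclic words, and finally observes that on the span $\A_x$ of words containing at least one $x$ the kernels of $\partial/\partial x$ and of $N$ coincide (because any nonzero $N(P)$ with $P\in\A_x$ has a term beginning with $x$, which survives $T_x$).

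Your approach bypasses the norm operator entirely. The Euler-type identity $\pi\,M_x(\partial w/\partial x)=k\,[w]$ (for $w$ containing $k$ copies of $x$) gives a direct one-sided inverse modulo commutators, and the conclusion follows at once in characteristic zero. This is shorter and more conceptual for the statement at hand. The trade-off is that the paper's proof yields the intermediate result $\Ker(N)=[\A,\A]$, which it reuses later to exhibit an explicit basis of $\A_\n$ by good cyclic words (needed in Lemmas~\ref{lS4.3.2}, \ref{lSI.3} and Corollary~\ref{cSI.1}); your method does not produce that basis as a by-product. One small stylistic point: you do not actually need to introduce $\bar\partial$ on $\A_\n$; the identity $(\pi\circ M_x)(\partial F_k/\partial x)=k\,\pi(F_k)$ follows from the monomial case by linearity in $\A$, and applying it to $F_k$ with $\partial F_k/\partial x=0$ immediately gives $\pi(F_k)=0$.
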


One can define the cyclic gradient $$\triangledown\,:\, \A \rar\A^{r+1} \,,\,\,\,\,\,  a \mapsto \left(\frac{\partial a}{\partial x},\,\frac{\partial a}{\partial y_1},\,\ldots,\,\frac{\partial a}{\partial y_r}\right) \text{ .}$$
Proposition~\ref{pSI.0.1} automatically implies the following corollary, a super version of a result of Voiculescu (see \cite[Theorem 2]{Vo}).

\begin{corollary}
$\Ker(\triangledown)\,=\, k +[\A,\A]$.

\end{corollary}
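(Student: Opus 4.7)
The plan is to deduce the corollary directly from Proposition~\ref{pSI.0.1} by intersecting the kernels of all cyclic partial derivatives. First I would observe that Proposition~\ref{pSI.0.1} is not really specific to the alphabet $x$: by the symmetry of the construction (relabel the generators) the analogous identity $\Ker(\partial/\partial v)\,=\,k\langle\{y_1,\ldots,y_r,x\}\setminus\{v\}\rangle+[\A,\A]$ holds for $v$ any one of the generators $x,y_1,\ldots,y_r$. Writing $z_0=x,z_1=y_1,\ldots,z_r=y_r$ and $B_i:=k\langle z_0,\ldots,\widehat{z_i},\ldots,z_r\rangle$ for the free subalgebra on all generators except $z_i$, this gives
\[
\Ker(\triangledown)\,=\,\bigcap_{i=0}^{r}\Ker\!\left(\frac{\partial}{\partial z_i}\right)\,=\,\bigcap_{i=0}^{r}\bigl(B_i+[\A,\A]\bigr).
\]

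The remaining task is to show that this intersection equals $k+[\A,\A]$. I would carry this out by passing to the commutator quotient $\A_{\natural}=\A/[\A,\A]$. As a graded $k$-vector space, $\A_{\natural}$ has a basis indexed by nonzero cyclic words in the alphabet $\{z_0,\ldots,z_r\}$ (see Section~\ref{SI.0}); the image of each subalgebra $B_i$ in $\A_{\natural}$ is exactly the span of those cyclic words that do not contain the letter $z_i$. Hence a class $\bar a\in\A_{\natural}$ lies in $\bigcap_i\mathrm{image}(B_i)$ iff it is supported on cyclic words containing \emph{no} letter at all, i.e.\ on the empty word. This forces $\bar a\in k\cdot 1$, and lifting back to $\A$ gives $a\in k+[\A,\A]$, which is the desired inclusion. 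The reverse inclusion is trivial since every cyclic derivative annihilates both scalars and commutators.

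The only mildly delicate point is the basis statement for $\A_{\natural}$ in the graded/super setting (some cyclic words may be forced to be zero because of the Koszul signs, reflected in the condition $\mathrm{N}(w)\ne 0$ used earlier in the paper), but this does not affect the argument: the subspace of $\A_{\natural}$ spanned by cyclic words involving at least one generator is still a direct complement of $k\cdot 1$, so the intersection computation goes through verbatim. Thus the corollary follows immediately from Proposition~\ref{pSI.0.1} applied to each generator in turn, with no additional computation needed beyond the elementary combinatorial observation about cyclic words above.
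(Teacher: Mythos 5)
Your proof is correct and follows exactly the route the paper intends: the paper simply asserts that the corollary follows ``automatically'' from Proposition~\ref{pSI.0.1}, and your argument (applying the proposition to each generator by symmetry, then computing the intersection $\bigcap_i(B_i+[\A,\A])$ in the commutator quotient using the cyclic-word basis of $\A_\natural$ from Lemma~\ref{lSI.0.1}) is the natural way to make that precise. Your handling of the graded subtlety about the $\mathrm{N}(w)\neq 0$ condition is also sound, since the basis statement and the multidegree-preservation of the quotient map are all that the intersection computation requires.
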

 The proof of Proposition~\ref{pSI.0.1} is a modification of that of Proposition 6.1 of~\cite{RSS}. In order to write down this proof, we need to undertake some preparations.
Let $\tau\,:\,\A \rar \A$ denote the operator such that for each word $w:=v_1\,\ldots\,v_n$ in
$\A$,
$$\tau(w) \,:=\,(-1)^{|v_n||v_1\ldots v_{n-1}|} v_nv_1\ldots v_{n-1} \mathrm{.}$$
Here $|\ldots|$ stands for homological degree.
Let $\A^{(n)}$ denote the (homologically graded) subspace of $\A$ spanned by the words of polynomial degree $n$. Note that $\tau^n|_{\A^{(n)}} = \id_{\A^{(n)}}$. We define the operator $\mathrm{N}:=1+\tau + \ldots +\tau^{n-1}\,:\,\A^{(n)} \rar \A^{(n)}$ for each $n \in \N$ (with $\mathrm{N}(1):=1$). This gives us an operator $\mathrm{N}\,:\,\A \rar \A$. One also has the operator $\mathrm{T}_x\,:\,\A \rar \A$. This is the unique $k$-linear operator on $\A$ satisfying
$$
\mathrm{T}_x(xw)=w,\,\,\,\,\, \mathrm{T}_x(y_i w)=0,\,\,\,\,\forall\,\,1 \leq i \leq k \ , 
$$
for all words $w$ in $x,y_1,\ldots ,y_r$. By definition,
$$\frac{\partial}{\partial x} \,=\, \mathrm{T}_x \circ \mathrm{N} \,:\,\A \rar \A \,\mathrm{.}$$
\begin{lemma} \la{lSI.0.1}
$\Ker(\mathrm{N})\,=\,[\A,\A]$.
\end{lemma}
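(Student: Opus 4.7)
The plan is to reduce the lemma to the following chain of equalities on each polynomial-degree component $\A^{(n)}$:
$$[\A,\A] \cap \A^{(n)} \,=\, \Im(1-\tau)|_{\A^{(n)}} \,=\, \Ker(\mathrm{N})|_{\A^{(n)}}.$$
Since both $[\A,\A]$ and $\Ker(\mathrm{N})$ decompose under the polynomial grading, establishing this for each $n$ suffices.

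First I would show $[\A,\A]\cap\A^{(n)} = \Im(1-\tau)|_{\A^{(n)}}$. For any word $w = v_1\cdots v_n$, the defining formula for $\tau$ gives
$$(1-\tau)(w) \,=\, v_1\cdots v_n \,-\, (-1)^{|v_n|\,|v_1\cdots v_{n-1}|}\,v_n v_1\cdots v_{n-1} \,=\, [v_1\cdots v_{n-1},\,v_n],$$
which is a super-commutator; so $\Im(1-\tau) \subseteq [\A,\A]$. Conversely, for homogeneous monomials $a,b$ of polynomial degrees $p,q$, a direct induction on $q$ (peeling one letter at a time off the end of $ab$ via $\tau$) shows that $\tau^q(ab) = (-1)^{|a||b|}\,ba$, so
$$[a,b] \,=\, ab - (-1)^{|a||b|}\,ba \,=\, (1-\tau^q)(ab) \,=\, (1-\tau)\bigl((1+\tau+\cdots+\tau^{q-1})(ab)\bigr) \,\in\, \Im(1-\tau).$$
By bilinearity this gives $[\A,\A] \subseteq \Im(1-\tau)$, completing the first equality.

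Next I would verify the crucial identity $\tau^n = \id$ on $\A^{(n)}$. On a word $w=v_1\cdots v_n$, the total Koszul sign accumulated by $n$ successive applications of $\tau$ is
$$\sum_{i=1}^{n} |v_i| \sum_{j\neq i} |v_j| \,=\, 2\!\!\sum_{1\le i<j\le n}\!\! |v_i||v_j| \,\equiv\, 0 \pmod{2},$$
so $\tau^n(w)=w$. Given this, the telescoping identity $\mathrm{N}(1-\tau) = 1-\tau^n = 0$ on $\A^{(n)}$ gives the inclusion $\Im(1-\tau) \subseteq \Ker(\mathrm{N})$.

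For the remaining inclusion $\Ker(\mathrm{N}) \subseteq \Im(1-\tau)$, the operator $\tau$ restricted to $\A^{(n)}$ generates an action of the finite cyclic group $\Z/n$. Since $k$ has characteristic zero, this representation decomposes as a direct sum of eigenspaces indexed by the $n$-th roots of unity; the averaging operator $\frac{1}{n}\mathrm{N}$ is the projector onto the $1$-eigenspace, while $\Im(1-\tau)$ coincides with the direct sum of the remaining eigenspaces. Hence $\Ker(\mathrm{N})$ and $\Im(1-\tau)$ agree, concluding the proof. The only point requiring genuine computation is the sign analysis showing $\tau^n = \id$, which I expect to be the main (though mild) technical obstacle; all remaining steps are formal manipulations or standard representation theory of cyclic groups.
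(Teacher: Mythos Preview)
Your proof is correct and takes a genuinely different route from the paper's. You identify $[\A,\A]$ with $\Im(1-\tau)$ and then invoke the elementary representation theory of $\Z/n$ in characteristic zero (or equivalently the explicit identity $(1-\tau)\sum_{j}(n-1-j)\tau^j = n - \mathrm{N}$) to conclude $\Im(1-\tau)=\Ker(\mathrm{N})$. The paper instead argues combinatorially: it orders the alphabets, introduces the lexicographically minimal rotate $w^*$ of each word, and shows directly that if the coefficient of $w^*$ in $\mathrm{N}(w)$ vanishes then $w$ is already a commutator (a ``bad'' word); for linear combinations of the remaining ``good'' words it then checks that lying in $\Ker(\mathrm{N})$ forces membership in $[\A,\A]$. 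Your argument is cleaner and more conceptual, and makes transparent that the result is just the vanishing of $H_1(\Z/n;\,\A^{(n)})$ in characteristic zero. The paper's argument, while more hands-on, has the advantage of isolating the graded phenomenon of ``bad'' words (those $w$ with $\mathrm{N}(w)=0$, e.g.\ $x^2yx^2y$ when $|y|=1$) and thereby directly exhibiting the basis $S$ of $\A_\natural$ consisting of good cyclic words, which is used in the subsequent lemmas of the appendix. One small caveat: your eigenspace phrasing tacitly assumes $k$ contains the $n$-th roots of unity; either replace it by the explicit operator identity above, or note that $\Ker(\mathrm{N})=\Im(1-\tau)$ is a statement about ranks and hence survives base change.
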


\begin{proof}
Clearly, $[\A,\A] \,\subset \,\Ker(\mathrm{N})$. Indeed, if $w_1$ and $w_2$ are words in $\A$, $[w_1,w_2]\,=\, (1-\tau^{l(w_2)})w$ where $w:=w_1w_2$ and $l(w_2)$ denotes the polynomial degree of $w_2$. Since $ \mathrm{N}\, \tau^r =\mathrm{N}$ for any $r$, $\, [w_1,w_2] \,\in\,\Ker(\mathrm{N})$.

To prove the reverse inclusion, we modify the proof of Proposition 6.1 of~\cite{RSS}. Order the alphabets $x,y_1,\ldots,y_r$. This induces a lexicographic order on the set of words in $\A$. Let $l(w)$ denote the polynomial degree (i.e, length) of a word $w$. For a word $w \in \A$, let $w^*$ denote the cyclic permutation of $w$ that is lexicographically minimum. Note that for at least one $0 \leq p < l(w)$, $\tau^p w=\pm w^*$. Now, suppose that the coefficient of $w^*$ in $\mathrm{N}(w)$ is $0$. Then, $\pm w^*= \tau^p w \,=\,-\tau^q(w)$ for some $0 \leq p < q <l(w)$. Hence, $w = -\tau^{q-p}(w)$. In particular, $w = \frac{1}{2}(w -\tau^{q-p}(w)) $. But $w -\tau^{q-p}w \in [\A,\A]$. Hence, if the coefficient of $w^*$ in $\mathrm{N}(w)$ is $0$, then $w \in [\A,\A]$ and $\mathrm{N}(w)=0$\footnote{This is the main
difference between our situation and the ungraded situation. For example, if $\A = k \langle x,y \rangle$ with $|x|=0,\,|y|=1$, then $\mathrm{N}(x^2yx^2y)=0$ and $x^2yx^2y= \frac{1}{2}[x^2y,x^2y]$.}. We refer to words of $\A$ that are in $[\A,\A]$ as ``bad''. Words that are not ``bad'' are called ``good''.

  Therefore it suffices to check that if $\mathrm{P}$ is a linear combination of good words in $\A$ such that $\mathrm{N}(\mathrm{P})=0$, then $\mathrm{P} \in [\A,\A]$. Then,
  $$\mathrm{P}\,=\,\sum_{i=1}^{n_1} c_iw_i + \sum_{i=n_1+1}^{n_2} c_iw_i + \ldots +\sum_{i=n_{s-1}+1}^{n_s} c_iw_i$$
  where $w_i$ and $w_j$ occur in the same summation if $w_i^*= w_j^*$.
  Hence,
  $$0= \mathrm{N}(\mathrm{P})= \sum_{i=1}^{n_1} c_i\mathrm{N}(w_i) + \sum_{i=n_1+1}^{n_2} c_i\mathrm{N}(w_i) + \ldots +\sum_{i=n_{s-1}+1}^{n_s} c_i\mathrm{N}(w_i)$$
  $$ = \sum_{i=1}^{n_1} \pm c_i\mathrm{N}(w_1^*) + \sum_{i=n_1+1}^{n_2} \pm c_i\mathrm{N}(w_{n_1+1}^*) + \ldots +\sum_{i=n_{s-1}+1}^{n_s} \pm c_i\mathrm{N}(w_{n_{s-1}+1}^*)\mathrm{.}$$
  This implies that $\sum_{i=1}^{n_1} \pm c_i = \sum_{i=n_1+1}^{n_2} \pm c_i = \ldots =
  \sum_{i=n_{s-1}+1}^{n_s} \pm c_i = 0$. Hence,
  $$ \mathrm{P} = \sum_{i=1}^{n_1} c_i(w_i \pm w_1^*) + \sum_{i=n_1+1}^{n_2} c_i(w_i \pm w_{n_1+1}^*) + \ldots +\sum_{i=n_{s-1}+1}^{n_s} c_i(w_i \pm w_{n_{s-1}+1}^*)$$
  $$= \sum_{i=1}^{n_1} c_i(w_i \mp w_i^*) + \sum_{i=n_1+1}^{n_2} c_i(w_i \mp w_i^*) + \ldots +\sum_{i=n_{s-1}+1}^{n_s} c_i(w_i \mp w_i^*) \mathrm{.}$$
  The reader may easily keep track of the signs that appear above to check that $w_i \mp w_i^* \in [\A,\A]$ for all $i$. This proves the required lemma.
\end{proof}

Lemma~\ref{lSI.0.1} implies that a basis for $\A_{\natural}:=\frac{\A}{[\A,\A]}$ is given by cyclic words $w$ satisfying $\mathrm{N}(w) \,\neq\,0$. We denote this basis by $S$.

\subsubsection{Proof of Proposition~\ref{pSI.0.1}}
Clearly, $k \langle y_1,\ldots, y_r\rangle$ is contained in $\Ker(\frac{\partial}{\partial x})$. Note that 
$$
\A \,=\,k \langle y_1,\ldots,y_r\rangle \oplus \A_x \ ,
$$ 
where $\A_x$ is the (bigraded) subspace spanned by words with at least one occurrence of the alphabet $x$. As in Section 6 of~\cite{RSS}, we claim that if $\mathrm{P} \,\in\, \A_x$ satisfies $\frac{\partial \mathrm{P}}{\partial x}\,=\,0$, then $\mathrm{P} \,\in\,\Ker(N)$. To see this, first note that $\mathrm{N}(\mathrm{P}) \in \A_x$.
Further, suppose that $\mathrm{N}(\mathrm{P}) \,\neq\,0$. Then, since $\mathrm{N}(\mathrm{P}) \,\in\,\A_x$ and $\mathrm{N}(\mathrm{P})\,=\,\tau^k\mathrm{N}(\mathrm{P})$ for all $k$, $\mathrm{N}(\mathrm{P})$ contains at least one nonzero summand of the form $c.xw$ where $c \in k$ and $w$ is a word in $\A$. This makes $c.w$ a nonzero summand of $\mathrm{T}_x \circ \mathrm{N}(\mathrm{P}) \,=\,\frac{\partial \mathrm{P}}{\partial x}$. This shows that if $\mathrm{P} \,\in\, \A_x$, then $\frac{\partial \mathrm{P}}{\partial x}=0$ iff $\mathrm{N}(\mathrm{P})\,=\,0$. Proposition~\ref{pSI.0.1} now follows immediately from Lemma~\ref{lSI.0.1}.

\subsubsection{} \la{SI.2}

In this subsection and the one that follows, we work in the $\Z$-graded rather than $\Z_2$-graded setting. The results in this and the subsequent subsection are extensions to the $\Z$-graded setting of corresponding results from Section 11 of~\cite{CEG} and Section 4.2 of~\cite{Gi} (which were proven in {\it loc. cit} in the ungraded setting). Fix $r \in \N$. Let $\mathcal A:=k[u_1,\ldots,u_r]$ where $u_i$ is a variable of homological degree $1$ for each $i$.

\begin{lemma} \la{lS3.3.3} For $n$ sufficiently large, the set of words in $X$ and $Y$ of length $\leq r$
is linearly independent for generic $X \in \M_n(k)$ and $Y \in \M_n(\mathcal A_1)$.
\end{lemma}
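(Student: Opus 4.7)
The plan is to reduce the genericity claim to linear independence in a universal graded polynomial algebra and then to exhibit a single matrix entry that separates all words of length $\leq r$. Throughout, I use that linear independence of a finite family of polynomial maps is a Zariski-open condition on parameter space, so it suffices to verify it at the generic (universal) point.

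First, I would parametrize a generic $Y \in \M_n(\mathcal A_1)$ as $Y = \sum_{a=1}^{r} u_a Y_a$ with $Y_a \in \M_n(k)$, so that the pair $(X, Y)$ is encoded by the $n^2(r+1)$ scalar coordinates $X_{ij}$ and $(Y_a)_{ij}$. Any word $W$ of length $\leq r$ in $X$ and $Y$, evaluated on this pair, lies in $\M_n(\mathcal A)$ and has entries that are polynomials in these coordinates with coefficients in $\mathcal A$. Linear independence of words for generic $(X, Y)$ is thus equivalent to injectivity of the $k$-linear map from the span of words into the free bigraded commutative algebra $\tilde R_n := k[X_{ij}, (Y_a)_{ij}] \otimes \mathcal A$ obtained by evaluating on the universal matrices; this is what I would aim to prove.

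For the universal computation, I would examine the $(1, l+1)$-entry of a word $W = z^{(1)} z^{(2)} \cdots z^{(l)}$ with $z^{(m)} \in \{X, Y\}$, isolating the summand corresponding to the monotone path $1 \to 2 \to \cdots \to (l+1)$. This path is available as soon as $n \geq r + 1$. Each factor of the summand is either $X_{m, m+1}$ or $\sum_a u_a (Y_a)_{m, m+1}$, and extracting the coefficient of $u_1 u_2 \cdots u_k$ (where $k$ is the number of $Y$'s in $W$) yields a polynomial in the $X_{m, m+1}$ and $(Y_a)_{m, m+1}$ whose monomial support uniquely recovers the $X$/$Y$ layout of $W$: the $Y$-positions are precisely the indices $m$ for which $(Y_a)_{m, m+1}$ appears, and the Grassmann labels $1, 2, \ldots, k$ sit in the left-to-right order dictated by $W$. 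Words with different numbers of $X$'s or $Y$'s live in different multidegrees, while same-multidegree words are separated by the positions of their letters; hence distinct words contribute disjoint monomial supports in this single entry, giving the required injectivity in $\M_n(\tilde R_n)$ for $n \geq r + 1$.

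The main obstacle is the careful bookkeeping of Grassmann signs: one must verify that the coefficient of $u_1 u_2 \cdots u_k$ in the chosen path summand is nonzero and cannot be cancelled by contributions from other paths or other words. A clean way to handle this rigorously is to impose a lexicographic order on monomials in $\tilde R_n$ and track only a leading term, or alternatively to specialize each $Y_a$ to an elementary matrix (say $Y_a := E_{a, a+1}$) so that each word contributes a single surviving monomial along the chosen path. Once injectivity in the universal case is established, the lemma follows by taking $(X, Y)$ in the corresponding Zariski-open dense subset of $\M_n(k) \times \M_n(\mathcal A_1)$.
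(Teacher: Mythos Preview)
Your approach is correct and takes a genuinely different route from the paper's, though both begin by writing $Y = \sum_a u_a Z_a$ with $Z_a \in \M_n(k)$.

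The paper extracts the coefficient of $u_1 \cdots u_q$ from the hypothetical dependence relation \emph{globally}, thereby producing a nontrivial $k$-linear relation among ordinary (ungraded) words in matrices $X, Z_1, \ldots, Z_q \in \M_n(k)$, valid for all $n$. It then kills this relation by a one-line appeal to the regular representation: take $n = \dim_k\, k\langle x, z_1, \ldots, z_q\rangle / I^{r+1}$ and let $X, Z_i$ act by left multiplication. Your argument is instead a direct monomial-support computation in the universal commutative algebra $k[X_{ij}, (Y_a)_{ij}]$: you single out, in the $(1, l+1)$-entry of a length-$l$ word, the superdiagonal-path summand and observe that its monomial support already determines the $X/Y$ layout of the word. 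This is entirely self-contained and yields the explicit bound $n \geq r+1$, at the cost of the sign/cancellation bookkeeping you flag.

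Your first remedy (leading term) does close the gap cleanly: assign weight $1$ to every superdiagonal variable $X_{m,m+1}$, $(Y_a)_{m,m+1}$ and weight $0$ to all other entries. Any non-monotone path from $1$ to $l+1$ must use at least one off-superdiagonal entry, so the top-weight part of the $(1,l+1)$-entry of $W$ (after extracting $u_1\cdots u_k$) is exactly the monotone-path term $\det\bigl((Y_j)_{m_i,m_i+1}\bigr)\cdot\prod_{m\notin\{m_i\}}X_{m,m+1}$, which is nonzero and whose monomials are disjoint from those of any $W'\neq W$. Your second remedy (specializing $Y_a = E_{a,a+1}$) needs adjustment: with that choice the coefficient of $u_1\cdots u_k$ along the monotone path is $u_{m_1}\cdots u_{m_k}$, which contributes to $u_1\cdots u_k$ only when $\{m_1,\ldots,m_k\}=\{1,\ldots,k\}$; you would instead want to drop the $u_1\cdots u_k$ extraction and track the surviving $u$-monomial directly.
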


\begin{proof} Suppose there is a nontrivial linear
dependence relation \begin{equation}  \la{eS3.5} \sum_{|w| \leq r} c_w. w(X,Y) = 0 \end{equation} for all
$n \in \N$ and for all $X \in \M_n(k), \, Y \in \M_n(\mathcal A_1)$. Applying the substitutions $X \mapsto t.X, Y \mapsto t.Y$,
we see that we can assume our relation to be homogeneous without loss of generality. Further, fixing $X$ and applying
the substitution $Y \mapsto s.Y$, we see that we can assume our relation to be homogeneous in $X$ as well as $Y$.
Suppose that our relation is homogeneous of degree $p$ in $X$ and degree $q$ in $Y$.
Putting $Y:=u_1.Z_1+\ldots+u_q.Z_q$ where $Z_1,\ldots,Z_q \in \M_n(k)$ are arbitrary, and taking the coefficient of $\,u_1 \ldots u_q\, $ in
equation~\eqref{eS3.5}, we obtain a linear dependence relation
$$ \sum_{|w'| \leq r} d_{w'}.w'(X,Z_1,\ldots,Z_q) =0 $$ satisfied by all $X,Z_1,\ldots,Z_q \in \M_n(k)$ for all $n$. However,
taking $n$ to be the dimension over $k$ of the quotient of $k\langle x, z_1,\ldots,z_r\rangle$ by the $r+1$-st power of the augmentation ideal,
and $X,Z_1,\ldots,Z_q$ to be the operators corresponding to multiplication on the left by $x,z_1,\ldots,z_q$ respectively, we see that
this is not possible. This proves the required lemma.
\end{proof}

Let $S$ denote the collection of nonempty {\it cyclic} words $w$ in the symbols $x$ and $y$ such that $\mathrm{N}(w) \neq 0$ (see Section~\ref{SI.0}).
Let $X$ be an even matrix variable and let $Y$ be an odd matrix variable. A polynomial function in $X$ and $Y$ of the form $$\sum_J c_J. \prod_{w \in S} \mathrm{Tr}(w(X,Y))^{J(w)}$$ where $J$ runs over some finite collection of functions from $S$ to $\mathbb N \cup \{0\}$
with finite support and $c_J$ are elements of $k$ is called {\it good} if $J(w) \leq 1$ for every $w$ of odd parity. The following lemma is a generalization (to the $\Z$-graded setting) of a similar statement (in the ungraded setting) that appears in Section 11 of~\cite{CEG}.

\begin{lemma}  \la{lSI.3} There are no non-trivial relations of polynomial degree $ \leq r $ in $X$ and $Y$ of the form
\begin{equation} \la{eS3.2} \sum_J c_J. \prod_{w \in S} \mathrm{Tr}(w(X,Y))^{J(w)} =0\,,\,\,\,\,\,\,\,\, \forall\,
n \in \mathbb N,\,\,X \in \M_n(k),\,\,Y \in \M_n(\mathcal A_1) \end{equation}
where the left-hand side is good.
\end{lemma}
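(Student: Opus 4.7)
The plan is to reduce Lemma~\ref{lSI.3} to the ungraded version established in \cite[Section~11]{CEG} by a combination of polarization and a substitution trick using formal odd scalars. Suppose for contradiction that there exists a non-trivial good relation
\[
\sum_J c_J \prod_{w \in S} \Tr(w(X,Y))^{J(w)} = 0
\]
of total polynomial degree $\leq r$, valid for all $n \in \N$, $X \in \M_n(k)$ and $Y \in \M_n(\mathcal{A}_1)$.

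First, I would reduce to the bihomogeneous case by the same scaling trick as in Lemma~\ref{lS3.3.3}: applying $X \mapsto tX$ and $Y \mapsto sY$ with formal even parameters $t,s$ and extracting the coefficient of $t^p s^q$, one may assume the relation is bihomogeneous of bidegree $(p,q)$ with $p+q \leq r$. Next, I would polarize: replace $X$ by $\sum_{i=1}^{p} t_i X_i$ (with even matrix variables $X_i$ and even parameters $t_i$) and $Y$ by $\sum_{j=1}^{q} s_j Y_j$ (with odd matrix variables $Y_j$ and even parameters $s_j$), then extract the coefficient of $t_1 \cdots t_p s_1 \cdots s_q$. Since $\mathrm{char}(k)=0$, this produces a relation that is multilinear in each $X_i$ and each $Y_j$ and which is non-trivial iff the original one was.

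The key step is a substitution $Y_j := u_j Z_j$, where $u_1,\ldots,u_q$ are auxiliary odd generators of $\mathcal{A}$ (using $q \leq r$) and $Z_1,\ldots,Z_q$ are arbitrary even matrix variables over $k$. Substituting into the polarized relation, each trace $\Tr(w(X_\bullet, Y_\bullet))$ becomes, after moving the odd scalars $u_j$ to the left with appropriate Koszul signs, an expression of the form
\[
\pm\, u_{j_1}\, u_{j_2} \cdots u_{j_{q_w}}\, \Tr\bigl(\bar{w}(X_\bullet, Z_\bullet)\bigr),
\]
where $\bar{w}$ is the even word obtained from $w$ by replacing each $Y_j$ with $Z_j$. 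Since the polarized relation is multilinear in each $Y_j$, the only non-zero contribution to the coefficient of $u_1 u_2 \cdots u_q$ (in a fixed order) comes from terms in which each $u_j$ appears exactly once across all trace factors. Extracting this coefficient produces an ungraded identity
\[
\sum c'_J \prod_{\bar w} \Tr\bigl(\bar{w}(X_\bullet, Z_\bullet)\bigr)^{J(\bar w)} = 0,
\]
valid for all $n \in \N$ and all even matrices $X_i, Z_j \in \M_n(k)$. By the ungraded analogue of our lemma in \cite[Section~11]{CEG}, all $c'_J$ vanish, and upon tracing the signs back through the polarization we recover $c_J = 0$ for every $J$, contradicting non-triviality.

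The main obstacle will be verifying that the map from original coefficients $c_J$ to the ungraded coefficients $c'_J$ is injective: that is, that distinct functions $J$ produce linearly independent contributions after the $u_j$ substitution. This requires that two different cyclic words $w_1, w_2$ in $(x,y)$ with the same underlying even word $\bar{w}_1 = \bar{w}_2$ be distinguished by the ordered sequence of positions at which $y$'s occur (equivalently, by the monomial in the $u_j$ obtained after permuting them to the left), and that the goodness condition $J(w)\leq 1$ for odd $w$ prevents the squared-to-zero degeneracies among these monomials from erasing any $c_J$. Once this bookkeeping is in place, the result follows from the ungraded case together with the polarization principle in characteristic zero.
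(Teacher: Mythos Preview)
Your approach is genuinely different from the paper's. The paper argues by induction on the polynomial degree, using the (graded) cyclic derivative: given a nontrivial good relation of degree $s\le r$, one differentiates it with respect to $X$ (or $Y$, if $X$ does not occur). Each term $\prod_w \Tr(w)^{J(w)}$ contributes summands of the form $(\text{lower-degree trace product})\cdot \frac{\partial u}{\partial x}(X,Y)$. By Lemma~\ref{lS3.3.3} the words of length $<r$ evaluated at generic $(X,Y)$ are linearly independent, and by Proposition~\ref{pSI.0.1} the cyclic derivatives $\frac{\partial u}{\partial x}$ for distinct $u\in S$ containing $x$ are linearly independent in $\A$; hence the coefficients of the various $\frac{\partial u}{\partial x}(X,Y)$ are themselves good relations of strictly smaller degree. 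Iterating drives the degree down to $\le 1$, where no nontrivial relation exists.

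Your route---bihomogenize, fully polarize, substitute $Y_j=u_jZ_j$, and invoke the ungraded statement from \cite{CEG}---is in the spirit of Lemma~\ref{lS1.3.1} and Lemma~\ref{lS3.3.3} and can be made to work, but the step you flag as ``bookkeeping'' is really the whole content. Concretely, you must show that the composite linear map
\[
\bSym[\mathcal{C}(k\langle x,y\rangle)](p,q)\ \xrightarrow{\text{polarize in }Y}\ \bSym[\mathcal{C}(k\langle x,y_1,\dots,y_q\rangle)]^{\text{mlin}}\ \xrightarrow{\,y_j\mapsto z_j\,}\ \bSym[\mathcal{C}(k\langle x,z_1,\dots,z_q\rangle)]^{\text{mlin}}
\]
is injective. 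Two points require care: (i) polarization here is applied to \emph{products of traces}, not to a single multilinear form, so different $J$'s produce overlapping sums over placements of the $Y_j$'s into separate trace factors, and you must check there is no cancellation among these; (ii) the set $S$ excludes the ``bad'' graded cyclic words (those with $\mathrm N(w)=0$), and you must verify that after polarization the resulting multilinear words are all good (they are, since each $y_j$ appears exactly once, forcing the word to be aperiodic) and that the graded-to-ungraded passage $y_j\mapsto z_j$ is a sign-twisted bijection on such cyclic words. Only once both of these are established does the ungraded \cite{CEG} result finish the argument. None of this is insurmountable, but it is not routine sign-chasing either; the paper's cyclic-derivative argument sidesteps all of it by working directly in the graded setting and using Proposition~\ref{pSI.0.1} to control the kernel.
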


\begin{proof} For a cyclic word $w(X,Y)=v_1\ldots.v_p$ of length $p$ in $x$ and $y$, let
$$\frac{\partial w}{\partial x}:= \sum_{v_k=X} {(-1)}^{|v_{k+1}\ldots v_p||v_1\ldots v_k|}v_{k+1}\ldots v_p v_1 \ldots v_{k-1}\,$$ denote the cyclic derivative of $w$ with respect to $x$. Suppose that there is a relation of the form~\eqref{eS3.2}
of polynomial degree $s\,\leq \,r$ where the left hand side is a good polynomial. If the relation~\eqref{eS3.2} has at least one summand involving $X$, we differentiate~\eqref{eS3.2} with respect to $X$
to obtain
\begin{equation} \la{eS3.3} \sum_J \sum_{u \in \mathrm{Supp}(J)\,,\,\,\, x \in u} \pm J(u)c_J . \prod_{w \in S}\mathrm{Tr}(w(X,Y))^{J(w)-\delta_{wu}}.\frac{\partial u}{\partial x}(X,Y)=0 \mathrm{.} \end{equation}

By Lemma~\ref{lS3.3.3} and Proposition~\ref{pSI.0.1}, the matrices of the form $\frac{\partial u}{\partial x}(X,Y)$ appearing on the right hand side of equation~\eqref{eS3.3} form a linearly independent set.
Taking the coefficient of any nonzero $\frac{\partial u}{\partial x}(X,Y)$ appearing on the R.H.S of~\eqref{eS3.3}, one obtains a good relation of the form~\eqref{eS3.2} of degree strictly less than $s$. If the relation~\eqref{eS3.2} does not involve $X$, we differentiate it with respect to $Y$ and argue as above to obtain another good relation of the form~\eqref{eS3.2} of degree strictly less than $s$. By induction,
we obtain a (nontrivial) good relation of the form~\eqref{eS3.2} of degree $1$ or $0$ satisfied for all $n$ sufficiently large.
Since this is impossible, the desired lemma follows.
\end{proof}

\subsubsection{} \la{SI.3}

Let $\mathcal A:=k[u_1,\ldots, u_n,\ldots]$, where the $u_i$'s have homological degree $1$. Lemma~\ref{lSI.3} immediately implies

\begin{lemma} \la{lSI.4}
There are no non-trivial relations in $X$ and $Y$ of the form
\begin{equation} \sum_J c_J . \prod_{w \in S} \mathrm{Tr}(w(X,Y))^{J(w)} =0\,,\,\,\,\,\,\,\,\,
\forall\,\,X \in \mathfrak{gl}_{\infty}(k),\,\,
Y \in \mathfrak{gl}_{\infty}(\mathcal A_1)
\end{equation}
where the left-hand side is good.
\end{lemma}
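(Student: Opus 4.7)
The plan is to deduce Lemma~\ref{lSI.4} from Lemma~\ref{lSI.3} by a straightforward restriction-and-truncation argument. Suppose, for contradiction, that a non-trivial good relation
$$ F(X,Y) \,:=\, \sum_J c_J \prod_{w \in S} \mathrm{Tr}(w(X,Y))^{J(w)} \,=\, 0 $$
holds identically for all $X \in \mathfrak{gl}_\infty(k)$ and $Y \in \mathfrak{gl}_\infty(\mathcal A_1)$, where $\mathcal A = k[u_1,u_2,\ldots]$. Since only finitely many functions $J$ appear in the sum, the relation has some finite polynomial degree $s$ in the matrix variables, and the entries of $Y$ involve only finitely many of the variables $u_i$, say $u_1,\ldots,u_r$ for some $r \geq s$.

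First I would truncate the ring of coefficients. Setting $u_i = 0$ for $i > r$ defines a surjection $\mathcal A \twoheadrightarrow k[u_1,\ldots,u_r] =: \mathcal A'$, and the vanishing of $F$ on all $Y \in \mathfrak{gl}_\infty(\mathcal A_1)$ in particular forces its vanishing on all $Y \in \mathfrak{gl}_\infty(\mathcal A'_1)$. Next I would restrict the size of matrices. For each $n \in \N$ the standard embedding $\M_n(k) \hookrightarrow \mathfrak{gl}_\infty(k)$ (resp.\ $\M_n(\mathcal A') \hookrightarrow \mathfrak{gl}_\infty(\mathcal A')$) by bordering with zeros is compatible with the traces of cyclic words, because those matrix units $e_{ij}$ with $i$ or $j > n$ contribute nothing to any $w(X,Y)$ whose factors all lie in $\M_n$. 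Hence $F(X,Y) = 0$ for every $n$ and every $X \in \M_n(k)$, $Y \in \M_n(\mathcal A'_1)$.

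The resulting identity is now precisely an identity of the type ruled out by Lemma~\ref{lSI.3}, with the same coefficients $c_J$: it is a good polynomial in $X,Y$ of polynomial degree $\leq r$ that vanishes for all $n$ and all such matrix arguments. Lemma~\ref{lSI.3} therefore forces all $c_J$ to be zero, contradicting the non-triviality of the relation we started with.

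I do not expect any genuine obstacle in this argument; the only thing that needs to be checked carefully is that the notion of ``good'' (the constraint $J(w) \leq 1$ for odd cyclic words $w$) is preserved under restriction, which is immediate since we are only specializing the arguments of $F$, not altering the coefficients or the index set of the sum. Thus Lemma~\ref{lSI.4} follows, as stated, as an immediate corollary of Lemma~\ref{lSI.3}.
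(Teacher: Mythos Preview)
Your argument is correct and is precisely the intended one: the paper records Lemma~\ref{lSI.4} as an immediate consequence of Lemma~\ref{lSI.3}, and what you have written is simply the spelled-out restriction (to finitely many $u_i$'s and to finite matrices via the standard block embedding) that makes ``immediate'' explicit. The only slightly awkward sentence is ``the entries of $Y$ involve only finitely many of the variables $u_i$,'' which reads as a claim about a particular $Y$ rather than as motivation for choosing $r \geq s$; the subsequent paragraph (setting $u_i = 0$ for $i > r$) is the correct formulation, so this is a wording issue, not a mathematical one.
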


The following corollary generalizes a fact proven in the ungraded setting (using a polarization argument) in Section 4.2 of~\cite{Gi}.

\begin{corollary} \la{cSI.1}
For any $k$-valued function $c$ on $S$ with finite (non-empty) support, there exist
$X \in \mathfrak{gl}_{\infty}(k),\,\,Y \in \mathfrak{gl}_{\infty}(\mathcal A_1)$
such that
$$ 
\sum_{w \in S}\, c(w)\, \mathrm{Tr}(w(X,Y)) \,\neq \, 0 \mathrm{.}
$$
\end{corollary}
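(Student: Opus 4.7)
The plan is to derive Corollary~\ref{cSI.1} as the immediate specialization of Lemma~\ref{lSI.4} to linear combinations of single traces. Since the corollary asserts the existence of a non-vanishing $(X,Y)$ for a fixed $c$ with finite non-empty support, I would argue by contrapositive: assuming
$$\sum_{w \in S} c(w)\,\mathrm{Tr}(w(X,Y)) \;=\; 0 \qquad \forall\, X \in \mathfrak{gl}_\infty(k),\; Y \in \mathfrak{gl}_\infty(\mathcal A_1),$$
I would conclude that $c \equiv 0$, contradicting the non-empty support assumption.

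To apply Lemma~\ref{lSI.4}, I would repackage the above identity in the form $\sum_J c_J \prod_{w \in S} \mathrm{Tr}(w(X,Y))^{J(w)} = 0$, letting $J$ range over the finite family of indicator functions $\mathbf{1}_{w_0}$ for $w_0 \in \mathrm{Supp}(c)$, with coefficient $c_{\mathbf{1}_{w_0}} := c(w_0)$. The key verification is that the resulting polynomial is \emph{good} in the sense of Lemma~\ref{lSI.4}: every indicator function trivially satisfies $J(w)\leq 1$ for all $w \in S$, in particular for odd-parity $w$, so goodness is automatic. Lemma~\ref{lSI.4} then forces every coefficient $c_J$, and hence every value $c(w)$, to vanish.

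I do not anticipate any genuine obstacle in this deduction: Corollary~\ref{cSI.1} is essentially a tautological restatement of the degree-one part of Lemma~\ref{lSI.4}. All of the combinatorial and invariant-theoretic content has already been absorbed into that lemma, whose proof reduces the infinite-matrix question to the finite-matrix Lemma~\ref{lSI.3}, which in turn relies on the description of $\Ker(\partial/\partial x)$ from Proposition~\ref{pSI.0.1} together with the linear independence of words guaranteed by Lemma~\ref{lS3.3.3}. The only point worth flagging is that the $Y$-entries are allowed to live in the full odd part $\mathcal A_1 = \bigoplus_i k u_i$ rather than in a finite-variable truncation, but this is already part of the statement of Lemma~\ref{lSI.4}, so no new argument is required.
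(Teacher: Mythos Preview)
Your proposal is correct and matches the paper's approach exactly: the paper does not give a separate proof of Corollary~\ref{cSI.1} but presents it as an immediate consequence of Lemma~\ref{lSI.4}, and your contrapositive argument via indicator functions $J=\mathbf{1}_{w_0}$ is precisely the intended specialization.
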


\end{document}